\algnewcommand\algorithmicinput{\textbf{INPUT:}}
\algnewcommand\INPUT{\item[\algorithmicinput]}
\algnewcommand\algorithmicoutput{\textbf{OUTPUT:}}
\algnewcommand\OUTPUT{\item[\algorithmicoutput]}
 \newcommand{\ou}{\"{o}}
\newtheorem{theorem}{Theorem}
\newtheorem{lemma}[theorem]{Lemma}
\newtheorem{proposition}[theorem]{Proposition}
\newtheorem{corollary}[theorem]{Corollary}
\newtheorem{definition}{Definition}
\newtheorem{remark}{Remark}
\newtheorem{assumption}{Assumption}
\newtheorem{model}{Model}
\newtheorem{example}{Example}
\newcommand{\op}{\text{op}}
\DeclareMathOperator*{\argmin}{arg\,min}
\DeclareMathOperator*{\esssup}{ess\,sup}
\DeclareMathOperator*{\essinf}{ess\,inf}
\newcommand{\hatp}{ \widehat {\mathcal P}}
 \newcommand{\LL}{\mathcal L}
\newcommand{\lasso}{{\sc Lasso}\xspace}
\date{\vspace{-5ex}}
\title{Localizing Changes in High-Dimensional Vector Autoregressive Processes}
\author[1]{Daren Wang}
\author[2]{Yi Yu}
\author[3]{Alessandro Rinaldo}
\author[1]{Rebecca Willett}
\affil[1]{\small Department of Statistics, University of Chicago}
\affil[2]{\small Department of Statistics, University of Warwick}
\affil[3]{\small Department of Statistics \& Data Science, Carnegie Mellon University}
\begin{document}

\maketitle

\begin{abstract}
Autoregressive models capture stochastic processes in which past realizations determine the generative distribution of new data; they arise naturally in a variety of industrial, biomedical, and financial settings. A key challenge when working with such data is to determine when the underlying generative model has changed, as this can offer insights into distinct operating regimes of the underlying system. This paper describes a novel dynamic programming approach to localizing changes in high-dimensional autoregressive processes and associated error rates that improve upon the prior state of the art. When the model parameters are piecewise constant over time and the corresponding process is piecewise stable, the proposed dynamic programming algorithm consistently localizes change points even as the dimensionality, the sparsity of the coefficient matrices, the temporal spacing between two consecutive change points, and the magnitude of the difference of two consecutive coefficient matrices are allowed to vary with the sample size. Furthermore, the accuracy of initial, coarse change point localization estimates can be boosted via a computationally-efficient refinement algorithm that provably improves the localization error rate.  Finally, a comprehensive simulation experiments and a real data analysis are provided to show the numerical superiority of our proposed methods.  %At the heart of the theoretical analysis lies a general framework for high-dimensional change point localization in regression settings that unveils key ingredients of localization consistency in a broad range of settings. The autoregressive model is a special case of this framework. A byproduct of this analysis are new, sharper rates for high-dimensional change point localization in linear regression settings that may be of independent interest. 

\vskip 5mm
\textbf{Keywords}: High dimensions; Vector autoregressive models; Dynamic programming; Change point detection.
\end{abstract}

\section{Introduction}\label{sec-introduction}

High-dimensional data are routinely collected in both traditional and emerging application areas.  Time series data are by no means immune to this high dimensionality trend, and commonly arise in applications from econometrics \citep[e.g.][]{bai1998estimating, de2008forecasting}, finance \citep[e.g.][]{chen1997testing}, genetics \citep[e.g.][]{michailidis2013autoregressive}, neuroimaging \citep[e.g.][]{smith2012future, bolstad2011causal}, predictive maintenance \citep[e.g.][]{susto2014machine, swanson2001general, yam2001intelligent}, to name but a few.  

Arguably, the most popular tool in modeling high-dimensional time series is the vector autoregressive (VAR) model \citep[see e.g.][]{lutkepohl2005new}, where a $p$-dimension time series is at a given time is prescribed as white noise  perturbation a linear combinations of its past values; see Section~\ref{sec:formulation} below for a definition. In its simplest form, a VAR(1) model implies that $\mathbb{E}[X_{t+1}|X_{t}] = A X_t \in \mathbb{R}^p$ for $t \in \mathbb{Z}$ and a given $p \times p$ coefficient matrix $A$.  When $p$ is large relative to the number of observed samples in the time series, we refer to this as a {\em high-dimensional} VAR model. A large body of literature focuses estimation of parameters of these processes when the time series is stationary (as detailed in the related work section).  In this paper, we consider {\em non-stationary VAR} models in which the linear coefficients are allowed to change as a function time in  piece-wise constant manner. For a VAR(1) model, this implies that 
	\[
		\mathbb{E}[X_{t+1}|X_t] = A_t X_t
	\]
	where the entries of $A_t$ are piecewise-constant over time. The change point detection problem of interest is to estimate the times at which $A_{t+1} \neq A_t$. This model is defined precisely in \Cref{assume:AR change point} below. 

Despite the vast body of literature on different change point detection problems, the study on \Cref{assume:AR change point} is scarce \citep[e.g.][]{safikhani2017joint} and popular change point detection methods focusing on  mean or covariance changes do not perform well when the data corresponds to a non-stationary VAR model.  This claim is supported by extensive numerical experiments in \Cref{sec:numerical}; for now, we present a teaser in \Cref{eq:why transition matrix}.  
 
\begin{example} \label{eq:why transition matrix}
Let $\{X_t\}_{t=1}^n \in \mathbb{R}^p$ be generated from \Cref{assume:AR change point}, with the number of samples $n = 240$, $p = 20$, lag $L = 1$ and noise variance $\sigma^2_{\epsilon} = 1$.  The change points are $t=n/3$ and $t=2n/3$.  The coefficient matrices are
	\[
		A_t^* = \begin{cases}  
			\left(v, -v, 0_{p \times (p-2)}\right) \in \mathbb R^{p\times p}, & t = 1, \ldots, n/3-1,  \\
		    \left(-v, v, 0_{p \times (p-2)}\right) \in \mathbb R^{p\times p}, & t = n/3, \ldots, 2n/3 - 1, \\
			\left(v, -v, 0_{p \times (p-2)}\right) \in \mathbb R^{p\times p}, & t = 2n/3, \ldots, n,
		\end{cases}
	\]
	where $v \in \mathbb{R}^p$ with odd coordinates being 0.1 and even coordinates being $-0.1$ and $0_{p \times (p-2)} \in \mathbb{R}^{p \times (p-2)}$ is an all zero matrix.
\end{example}

(A systematic study of \Cref{eq:why transition matrix} is provided in \Cref{sec:numerical} scenario (ii).)  Figures~\ref{fig-1} and \ref{fig-2} show the first four coordinates of one realization of this process and the leading $4 \times 4$ sub-matrices of their sample covariance matrices, respectively.  As we can see from the black curves, the change points are not discernible  from the raw data or the sample covariances.  This phenomenon is further demonstrated in \Cref{fig-1} where two renowned competitors -- ``inspect'' \citep{inspect-R} and ``SBS-MVTS'' \citep{wbs-R} -- both fail here, but our proposed method penalized dynamic programming approach, indicated by DP and to be discussed in this paper, manages to identify the correct change points.

\begin{figure}[ht]
\begin{center}
 {\includegraphics[width=1\linewidth]{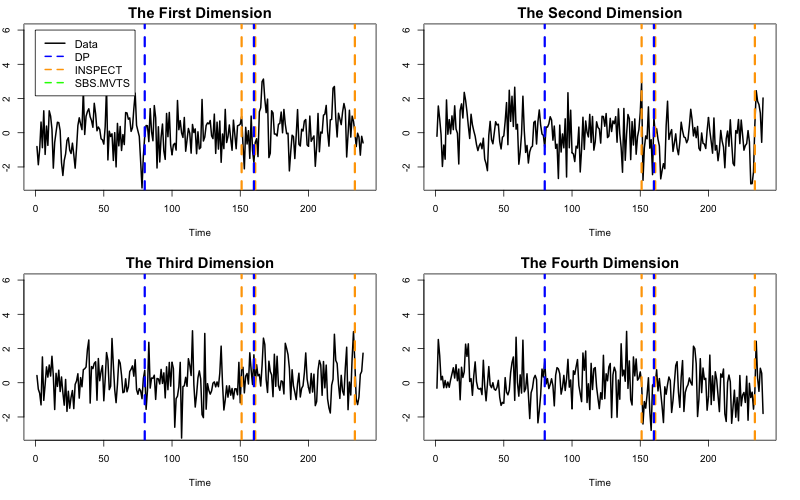}}
\caption{ \label{fig-1}  \Cref{eq:why transition matrix}.  Plots of the first four coordinates of a sample realization, $X_t[i]$ for $i = 1,2,3,4$ as well as the results of three change point detection methods: DP (dynamic programming, {\em this paper}), INSPECT \citep{inspect-R}, and SBS-MVTS \citep{wbs-R}. Unlike the other two methods, the proposed DP approach correct identifies the change points at $t=80$ and $t=160$. These plots illustrate the difficulty of detecting changes in a VAR process by looking for changes in the mean. This example is detailed in \Cref{sec:numerical}.
 }
\end{center}
\end{figure}
 
\begin{figure}[ht]
\begin{center}
 \includegraphics[width=1\linewidth]{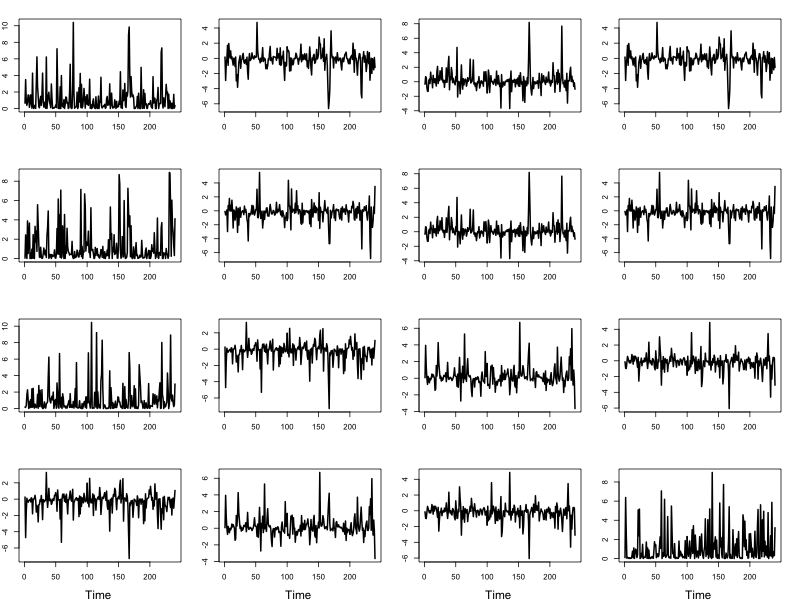}
 
\caption  {\label{fig-2}  All sixteen coordinates of the leading $4 \times 4$ sub-matrices of the sample covariance matrices of a realization from \Cref{eq:why transition matrix}, $S_t[i,j] = X_t[i]X_t[j]$ for $i,j \in \{1,2,3,4\}$. These plots illustrate the difficulty of detecting changes in a VAR process by looking for changes in the   covariance. This example is detailed in \Cref{sec:numerical}.}
 
\end{center}
\end{figure}

\subsection{Problem formulation}\label{sec:formulation}
We begin with a formal definition of a VAR process with lag $L$:
\begin{definition}[Vector autoregressive process with lag $L$.]\label{def-var-l-process}
	The sequence $\{X_t\}_{t=-\infty}^\infty \subset \mathbb{R}^p$ is generated from a Gaussian vector autoregressive process with lag $L$, $L \geq 1$, if there exists a collection of coefficient matrices $A^*[1], \ldots, A^*[L] \in \mathbb R^{p \times p}$ such that 
	\begin{equation}\label{eq-var-d}
		X_{t+1} = \sum_{l=1}^L A^*[l] X_{t+1-l } + \varepsilon_t, \quad t \in \mathbb{Z},
	\end{equation}
	where $\{\varepsilon_t\}_{t=-\infty} ^ \infty  \stackrel{\mbox{i.i.d}}{\sim } \mathcal{N}_p(0, \sigma_\epsilon^2 I_p)$ and $\sigma_{\epsilon} > 0$.

	In addition, the time series $\{ X_t\}_{t=-\infty}^\infty \subset \mathbb{R}^p$ is stable, if 
	\begin{equation}\label{eq-coef-cond-stability}
		\det \left(I_p - \sum_{l=1}^L z^lA^*[l] \right) \neq 0, \quad  \forall z \in \mathbb C, \, |z|\le 1.
	\end{equation}
\end{definition}

In this paper, we study a specific type of non-stationary high-dimensional VAR model, which possesses piecewise-constant coefficient matrices, formally introduced in \Cref{assume:AR change point}, which is built upon \Cref{def-var-l-process}.
\begin{model}[Autoregressive model]\label{assume:AR change point} 
	Let $(X_1, \ldots, X_n)$ be a time series with random vectors in $\mathbb{R}^p$ and let $1 = \eta_0 < \eta_1 < \ldots < \eta_K \leq n < \eta_{K+1} = n + 1$ be an increasing sequence of change points.  Let $\Delta$ be the minimal spacing between consecutive change points, defined as
	\begin{equation}\label{eq:Delta}
		\Delta = \min_{k = 1, \ldots, K+1} (\eta_k - \eta_{k-1}).
	\end{equation}  
	For any $k \in \{0, \ldots, K\}$, set $\mathcal{X}_k = \{X_{\eta_k}, \ldots, X_{\eta_{k+1}-1}\}$.  We assume the following.
	\begin{itemize}
	\item For each $k, l \in \{0, \ldots, K\}$, $k \neq l$, it holds that $\mathcal{X}_k$ and $\mathcal{X}_l$ are independent.
	\item For each $k \in \{0, \ldots, K\}$, $\mathcal{X}_k$ is a subset of an infinite stable time series $\mathcal{X}^{\infty}_k$ with coefficient matrices $\{A^*_{\eta_k}[l]\}_{l=1}^L$, with $1 \leq L < \Delta$ (see \Cref{def-var-l-process}).
	\item For each $k \in \{0, \ldots, K\}$, $i \in \{0, \ldots, L-1\}$, $X_{\eta_k + i}$ satisfies the model
				\[
					X_{\eta_k + i} = \sum_{l = 1}^L A_{\eta_k}^*[l]\widetilde{X}_{\eta_k + i - l}^{\top} + \varepsilon_{\eta_k + i}, 
				\]			
				where $\widetilde{X}_{\eta_k + i - l} = X_{\eta_k + i - l}$ if $i - l \geq 0$, and $\widetilde{X}_{\eta_k + i - l}$'s are unobserved latent random vectors drawn from $\mathcal{X}^{\infty}_k$ if $i - l < 0$.
	\end{itemize}
\end{model}

\begin{remark}
When defining a VAR($L$) process, one implicitly assumes the lag-$L$ coefficient matrix $A^*[L]$ is non-zero.  In \Cref{assume:AR change point}, we assume that each piece $\mathcal{X}_k$ is taken from a stable VAR($L$) process.  In fact, what we only need is that $\mathcal{X}_k$ is taken from a stable VAR($L_k$) process and $L = \max_{k = 0}^K L_k$.
\end{remark}

Given data sampled from \Cref{assume:AR change point}, our main task is to develop computationally-efficient algorithms that can consistently estimate both the unknown number $K$ of change points and the change points $\{\eta_k\}_{k = 1}^K$, at which the coefficient matrices change.  That is, we seek \emph{consistent} estimators $\{\hat{\eta}_k\}_{k = 1}^{\widehat{K}}$, such that, as the sample size $n$ grows unbounded, it holds with probability tending to 1 that 
	\[
		\widehat{K} = K \quad \mbox{and} \quad \frac{\epsilon}{\Delta} = \max_{k = 1, \ldots, K} \frac{|\widehat \eta_k -\eta_k|}{\Delta} = o(1).
	\]
	In the rest of the paper, we refer to $\epsilon$ as the {\it localization error} and $\epsilon/\Delta$ as the {\it localization rate}.

\subsection{Summary of main contributions}

We now summarize the main contributions of this paper.  First, we provide accurate   change point estimators for \Cref{assume:AR change point} in high-dimensional settings where we allow model parameters, including the dimensionality of the data $p$, the sparsity parameter $d_0$, defined to be the number of non-zero entries of the coefficient matrices, the number of change points $K$, the smallest distance between two consecutive change points $\Delta$, and the smallest difference between two consecutive different coefficient matrices $\kappa$ (formally defined in \ref{eq:kappa} below),  to change with the sample size $n$. To the best of our knowledge, the theoretical results we provide in this paper are significantly sharper than any existing literature.  

We note that  these sharp bounds require several technical advances. Specifically, most existing change point detection methods assumes subsequent observations are independent, making key technical steps such as establishing restricted eigenvalue conditions  easily verified. In contrast, our observations exhibit temporal dependence; no existing literature implies our results. Furthermore, we  cannot directly leverage high-dimensional  stationary VAR estimation bounds to obtain rates on change point detection. Analysis of our algorithm requires delicate treatment of small intervals between potential change points that allows us to prevent false discoveries and obtain sharp rates.

Second, we generalize the minimal partitioning problem \citep[e.g.][]{FriedrichEtal2008} to suit the change point localizing problem in piecewise-stable high-dimensional vector autoregressive processes.  The optimization problem can be solved using dynamic programming approaches in polynomial time.  As we will further emphasize, this optimization tool is by no means new and has been mainly used to solve change point detection in univariate data sequences.  In this paper, we have shown that for a much more sophisticated problem, this simple tool  combined with the appropriate inputs computed by our method  will still be able to produce consistent estimators.

Last, in the event that an initial estimator is at hand and satisfies mild conditions, we further devise an additional second step (\Cref{algorithm:LR}),  to deliver a provably better localization error rate, even though our initial estimator's output from the dynamic programming approach already provides the sharpest rates among the ones in the existing literature. %\todo[inline]{I think this is overstating things, because this improvement depends on certain conditions on the initial estimate being met, and  a method that produces an initial estimate that meets these conditions has slower rates. So I do not think we can claim we get a better rate. Rather, I think we can claim (a) in the event that we have an initial estimate that is sufficiently accurate, we can refine that estimate with an even better localization rate and (b) this approach yields empirical advantages in our simulations. }

\subsection{Related work}
 
For the model described in \Cref{def-var-l-process}, if the dimensionality $p$ diverges as the sample size goes unbounded, then it is a high-dimensional VAR model, the   recent literature on  estimation of  stationary  high-dimensional VAR models is vast.  \cite{hsu2008subset}, \cite{haufe2010sparse}, \cite{shojaie2010discovering}, \cite{basu2015regularized}, \cite{michailidis2013autoregressive}, \cite{loh2011high}, \cite{wu2016performance}, \cite{bolstad2011causal}, \cite{basu2015regularized}, among many others, studied  different aspects of  the \lasso penalised VAR models; \cite{han2015direct} utilized the Dantzig selector; \cite{bickel2011banded}, \cite{guo2016high} and others resorted to banded autocovariance structures for time series modeling; the low rank conditions were exploited in \cite{forni2005generalized}, \cite{lam2012factor}, \cite{chang2015high}, \cite{chang2018principal}, among many others; \cite{xiao2012covariance}, \cite{chen2013covariance} and \cite{tank2015bayesian} focused on the properties of the covariance and precision matrices; various other inference related problems were also studied in \cite{chang2017testing}, \cite{fiecas2018spectral}, \cite{schneider2016partial}, among many others.

The above list of references, far from being complete, is concerned with stationary time series.  As for non-stationary high-dimensional time series data, \cite{zhang2019identifying} and \cite{tu2018errorcorrection}, among others, studied error-correction models; \cite{wang2017optimal} and \cite{aue2009break} examined the covariance change point detection problem; \cite{cho2015multiple},\cite{cho2016change}, \cite{wang2018high}, \cite{dette2018relevant}, among many others, studied change point detection for high-dimensional time series with piecewise-constant mean; recently, \cite{leonardi2016computationally} examined the change point detection problem in regression setting and \cite{safikhani2017joint} proposed a fused-\lasso-based approach to estimate both the change points and the parameters of high-dimensional piecewise VAR models.  

%We note that as pointed out in many existing time series  literature (see, e.g.,    , \cite{safikhani2017joint}  and references therein), the efforts made   understand   whether and when any  method developed in the regression setting would work in VAR setting  was a significant contribution.

\subsection{Notation}\label{sec-notation}

Throughout this paper, we adopt the following notation.  For any set $S$, $|S|$ denotes its cardinality.  For any vector $v$, let $\|v\|_2$, $\|v\|_1$, $\|v\|_0$ and $\|v\|_{\infty}$ be its $\ell_2$-, $\ell_1$-, $\ell_0$- and entry-wise maximum norms, respectively; and let $v(j)$ be the $j$th coordinate of $v$.  For any square matrix $A \in \mathbb{R}^{n \times n}$, let $\Lambda_{\min}(A)$ and $ \Lambda_{\max}(A)$ be the smallest and largest eigenvalues of matrix $A$, respectively; for a non-empty $S \subset \{1, \ldots, n\}$, let $A_S$ be the sub matrix of $A$ consisting of the entries in coordinates $S \times S$.  For any matrix $B \in \mathbb{R}^{n \times m}$, let $\|B\|_{\op}$ be the operator norm of $B$; let $\|B\|_1 = \|\mathrm{vec}(B)\|_1$, $\|B\|_2 = \|\mathrm{vec}(B)\|_2$ and $\|B\|_0 = \|\mathrm{vec}(B)\|_0$, where $\mathrm{vec}(B) \in \mathbb{R}^{nm}$ is the vectorization of $B$ by stacking the columns of $B$.  In fact, $\|B\|_2$ corresponds to the Frobenius norm of $B$.  For any pair of integers $s, e \in \{0, 1, \ldots, n\}$ with $s < e$, we let $(s, e] = \{s + 1, \ldots, e\}$ and $[s, e] = \{s , \ldots, e\}$ be the corresponding integer intervals.

\section{Methods}\label{sec-methods}

In this section, we detail our approaches.  In \Cref{sec-minimal-partition}, we use a dynamic programming approach to solve the minimal partition problem \eqref{eq-wide-p}, which involves a loss function based on \lasso estimators of the coefficient matrices \eqref{eq:VARD oracle} and which provides a sequence of change point estimators.  In \Cref{section:pgl}, we propose an optional post-processing algorithm, which requires an initial estimate as input.  Such inputs can (but is not restricted to) be the estimator analyzed in \Cref{sec-minimal-partition}.  The core of the post-processing algorithm is to exploit a group \lasso estimation to provide refined change point estimators.

\subsection{The minimal partitioning problem and dynamic programming approach}\label{sec-minimal-partition}

To achieve the goal of obtaining consistent change point estimators, we adopt a dynamic programming approach.  In detail, let $\mathcal{P}$ be an {\it interval partition} of $\{1, \ldots, n\}$ into $K_{\cal P}$ time intervals, i.e.
	\[
		\mathcal{P} = \bigl\{\{1, \ldots, i_1-1\}, \{i_1, \ldots, i_2-1\}, \ldots, \{i_{K_{\mathcal{P}}}, \ldots, n\}\bigr\},
	\]
	for some integers $1 < i_1 < \cdots < i_{K_{\mathcal{P}}} \leq n$, where $K_{\mathcal{P}} \geq 1$.  For a positive tuning parameter $\gamma > 0$, let 
	\begin{equation}\label{eq-wide-p}
		\widehat{\mathcal{P}} \in \argmin_{\mathcal{P}} \left\{\sum_{I \in \mathcal{P}} \mathcal{L}(I) + \gamma |\mathcal{P}|\right\}, 
	\end{equation}
	where $\mathcal{L}(\cdot)$ is a loss function to be specified below, $|\mathcal{P}|$ is the cardinality of $\mathcal{P}$ and the minimization is taken over all possible interval partitions of $\{1, \ldots, n+1\}$.
	
The change point estimator induced by the solution to \eqref{eq-wide-p} is simply obtained by taking all the left endpoints of the intervals $I \in \widehat{\mathcal{P}}$, except $1$.  The optimization problem \eqref{eq-wide-p} is known as the \emph{minimal partition problem} and can be solved using dynamic programming with computational cost of order $O\{n^2 \mathcal T(n)\}$, where $\mathcal T(n)$ denotes the computational cost of solving $\mathcal{L}(I)$ with $|I| = n$ \citep[see e.g.~Algorithm 1 in][]{FriedrichEtal2008}.  Algorithms based on \eqref{eq-wide-p} are widely used in the change point detection literature.  \cite{FriedrichEtal2008}, \cite{KillickEtal2012}, \cite{Rigaill2010}, \cite{MaidstoneEtal2017}, \cite{wang2018univariate}, among others, studied dynamic programming approaches for change point analysis involving a univariate time series with piecewise-constant means. \cite{leonardi2016computationally} examined high-dimensional linear regression change point detection problems by using a version of dynamic programming approach. 

We will tackle \Cref{assume:AR change point} in the framework of \eqref{eq-wide-p}, by setting $I = [s, e]$, where $s > e$  are two positive integers and
	\begin{align}\label{eq:VARD likelihood}
		\mathcal L (I) = \begin{cases}
			\sum_{t = s + L}^e \left \|X_{t} - \sum_{l=1}^L \widehat A _I[l] X_{t-l} \right \|^2, & e-s-L+1 \ge \gamma, \\
			0, & \text{otherwise,}
		\end{cases}
	\end{align} 
	with 
	\begin{align}\label{eq:VARD oracle}
		\left\{\widehat {A}_I[l] \right\}_{l = 1}^L = \argmin_{\{A[l]\}_{l=1}^L \in \mathbb R^ {p\times p}} \sum_{t = s + L}^e \left \|X_{t} -\sum_{l=1}^L A[l] X_{t-l}\right\|^2 + \lambda \sqrt{e-s-L+1}\sum_{l=1}^L\|A[l]\|_1.
\end{align}

In the sequel, for simplicity we refer to the change point estimation pipeline based on \eqref{eq-wide-p}, \eqref{eq:VARD likelihood} and \eqref{eq:VARD oracle} as the dynamic programming (DP) approach.  In the DP approach, the estimation is twofold.  Firstly, we estimate the high-dimensional coefficient matrices adopting the \lasso procedure in \eqref{eq:VARD oracle}, where the tuning parameter $\lambda$ is used to obtain sparse estimates.  Secondly, with the estimated coefficient matrices, we summon the minimal partitioning setup in \eqref{eq-wide-p} to obtain change point estimators, where the tuning parameter $\gamma$ is deployed to penalize over-partitioning.  More discussions on the choice of tuning parameters will be provided later in the sense of both theoretical guarantees and practical guidance.

Note that in \eqref{eq:VARD likelihood} and \eqref{eq:VARD oracle}, the summation over $t$ is taken only from $s+L$ to $e$ in order to guarantee that the loss function $\mathcal{L}$ on each interval $I$ is solely depending on the information in that interval.  For notational simplicity and with some abuse of the notation, in the proofs provided in the Appendices, we will just use $t \in I$ instead of $t \in [s +L, e]$.

For completeness, we present the DP procedure in \Cref{algorithm:DP}.
\begin{algorithm}[htbp]
\begin{algorithmic}
	\INPUT Data $\{X(t)\}_{t=1}^{n}$, tuning parameters $\lambda, \zeta > 0$.
	\State $(\mathcal{B}, s, t, \mathrm{FLAG}) \leftarrow (\emptyset, 0, 2, 0)$
	\While{$s < n-1$}
		\State $s \leftarrow s + 1$
		\While{$t < n$ and $\mathrm{FLAG} = 0$}
			\State $t \leftarrow t+1$ \Comment{$\mathcal{L}(\cdot)$ is defined in \eqref{eq:VARD likelihood}-\eqref{eq:VARD oracle} and $\lambda$ is involved thereof.}
			\If{$\min_{l = s+1, \ldots, t-1}\{\mathcal{L}([s, l]) + \mathcal{L}([l+1, t]) + \gamma < \mathcal{L}([s, t])\}$} 
				\State $s \leftarrow \min \argmin_{l = s+1, \ldots, t-1}\{\mathcal{L}([s, l]) + \mathcal{L}([l+1, t]) + \gamma < \mathcal{L}([s, t])\}$
				\State $\mathcal{B} \leftarrow \mathcal{B} \cup \{s\}$
				\State $\mathrm{FLAG} \leftarrow 1$
			\EndIf
		\EndWhile
	\EndWhile
	\OUTPUT The set of estimated change points $\mathcal{B}$.
\caption{Penalized dynamic programming. DP$(\{X(t)\}_{t=1}^{n}, \lambda, \gamma)$}
\label{algorithm:DP}
\end{algorithmic}
\end{algorithm}

\subsection{Post processing through group \lasso} \label{section:pgl}

%Though, as we show below, the localization rates afforded by the DP approach in \Cref{algorithm:DP} are already sharper than any other rates previously established in the literature, 

As shown in \Cref{sec-theory} below, the DP approach in \Cref{algorithm:DP} delivers consistent change point estimators with localization error rate that are sharper than any other rates previously established in the literature. In fact, these rates can be further improved upon by deploying a more sophisticated procedure that refines the initial change point estimators via post-processing  through group \lasso (PGL), step detailed in \Cref{algorithm:LR}.

\begin{algorithm}[htbp]
\begin{algorithmic}
	\INPUT Data $\{X(t)\}_{t=1}^{n}$, a collection of time points $\{\widetilde{\eta}_k\}_{k = 1}^{\widetilde{K}}$ , tuning parameter $\zeta > 0$.
	\State $(\widetilde{\eta}_0, \widetilde{\eta}_{\widetilde{K} + 1}) \leftarrow (1, n+1)$
	\For{$k = 1, \ldots, \widetilde{K}$}  
		\State $(s_k, e_k) \leftarrow (2\widetilde{\eta}_{k-1}/3 + \widetilde{\eta}_{k}/3, 2\widetilde{\eta}_{k}/3 + \widetilde{\eta}_{k+1}/3)$
		\State 
		\begin{align} \nonumber 
			& \left(\widehat{A} , \widehat{B} , \widehat{\eta}_k\right) \leftarrow  \argmin_{\substack{\eta \in \{s_k + L, \ldots, e_k - L\} \\ \{ A[l]\}_{l=1}^L , \{ B[l] \}_{l=1}^L  \subset  \mathbb{R}^{p \times p} } }  \Bigg\{\sum_{t = s_k + L}^{\eta}\bigl\|X_{t + 1} -  \sum_{l=1}^LA[l] X_{t+1-l} \bigr\|^2_2 \nonumber  \\
			  & + \sum_{t = \eta + L}^{e_k}\bigl\|X_{t + 1} - \sum_{l=1}^L B[l] X_{t+1-l} \bigr\|_2^2  + \zeta  \sum_{l=1}^L  \sum_{i, j = 1}^p  \sqrt{(\eta - s_k)(A[l])_{ij}^2 
			+ (e_k - \eta)(B[l])_{ij}^2}\Bigg\} \label{eq-g-lasso}
		\end{align}
	\EndFor
	\OUTPUT The set of estimated change points $\{\widehat{\eta}_k\}_{k = 1}^{\widetilde {K}}$.
\caption{Post-processing through group \lasso. PGL$(\{X(t)\}_{t=1}^{n}, \{\widetilde{\eta}_k\}_{k = 1}^{\widetilde{K}}, \zeta)$}
\label{algorithm:LR}
\end{algorithmic}
\end{algorithm} 

The idea of the PGL algorithm is to refine a preliminary collection of change point estimators $\{\widetilde{\eta}_k\}_{k = 1}^{K}$, which is taken in as inputs.  The preliminary change point estimators produce a sequence of consecutive triplets $\{\widetilde{\eta}_k, \widetilde{\eta}_{k+1}, \widetilde{\eta}_{k+2}\}_{k = 0}^{\widetilde{K}-1}$, with $\widetilde{\eta}_k = 1$ and $\widetilde{\eta}_{K+1} = n+1$.  The rest of the post processing works in each interval $(\widetilde{\eta}_k, \widetilde{\eta}_{k+2})$ to refine the estimator $\widetilde{\eta}_{k+1}$, and thus can be scaled parallel.  

In each interval, we first shrink it to a narrower one $(2\widetilde{\eta}_{k-1}/3 + \widetilde{\eta}_{k}/3, 2\widetilde{\eta}_{k}/3 + \widetilde{\eta}_{k+1}/3)$, in order to avoid false positive.  The constants $1/3$ and $2/3$ are to some extent \emph{ad hoc}, but works for a wide range of initial estimators, which are not necessarily consistent.  We then conduct group \lasso procedure stated in \eqref{eq-g-lasso} to refine $\widetilde{\eta}_k$.  The improvement will be quantified in \Cref{sec-theory}.  Intuitively, the group \lasso penalty exploits the piecewise-constant property of the coefficient matrices and improves the estimation.  It is worth mentioning that one may replace the \lasso penalty in \eqref{eq:VARD oracle} with the group \lasso penalty in the initial estimation stage and to improve the estimation from there, but due to the computational cost of the group \lasso estimation, we stick to the two-step strategy we introduce in this paper.

\section{Consistency of the change point estimators}\label{sec-theory}
In this section, we provide the theoretical guarantees for the change point estimators arising from the DP approach and the PGL algorithm, based on data generated from \Cref{assume:AR change point}.

 \subsection{Assumptions}\label{sec-assumptions}
 We begin by formulating the assumptions we impose to derive consistency guarantees.  

\begin{assumption} \label{assume:AR high dim coefficient}  
 
Consider \Cref{assume:AR change point}.  We assume the following.
	\begin{itemize}
		\item [\textbf{a.}] (Sparsity).  The coefficient matrices $\{A_t^*[l],\, l = 1, \ldots, L\}$ satisfy the stability condition \eqref{eq-coef-cond-stability} and there exists a subset $S \subset \{1, \ldots, p\} ^{\times 2}$, $|S| = d_0$, such that
			\[
				(A_t^*[l])(i, j) = 0, \quad t = 1, \ldots, n, \, l = 1, \ldots, L, \, (i, j) \in S^c.
			\]
			In addition, suppose that 

		\item [\textbf{b.}] (Spectral density conditions).  For $k \in \{0, \ldots, K\}$, $h \in \mathbb{Z}$, let $\Sigma_k(h)$ be the population version of the lag-$h$ autocovariance function of $\mathcal{X}_k$ and let $f_k(\cdot)$ be the corresponding spectral density function,  defined as
			\[
				\theta \in (-\pi , \pi] \mapsto f_k(\theta) = \frac{1}{2\pi} \sum_{\ell = -\infty}^{\infty} \Sigma_k(\ell) e^{-\imath \ell \theta}.
			\]
			Assume that
				\[
					\mathcal{M} = \max_{k = 0, \ldots, K} \mathcal{M}(f_k) = \max_{k = 1, \ldots, K}\esssup_{\theta\in (-\pi, \pi]}\Lambda_{\max} (f_k(\theta)) < \infty
				\]
				and
				\[
					\mathfrak{m} = \min_{k = 0, \ldots, K} \mathfrak{m}(f_k) = \min_{k = 1, \ldots, K} \essinf_{\theta \in (-\pi, \pi]} \Lambda_{\min}(f_k(\theta)) > 0.
				\]	

		\item [\textbf{c.}] (Signal-to-noise ratio).  For any $\xi > 0$, there exists an absolute constant $C_{\mathrm{SNR}} > 0$, dependent on $\mathcal{M}$ and $\mathfrak{m}$, such that 
			\begin{equation}\label{eq:snr-vard}
				\Delta \kappa^2 \geq C_{\mathrm{SNR}} d_0^2 K \log^{1+\xi}(n \vee p),	
			\end{equation}
			where $\Delta$ is the  minimal spacing  (see \ref{eq:Delta} above) and $\kappa$ is the minimal jump size, defined as 
			\begin{equation}\label{eq:kappa}
				\kappa = \min_{k = 1, \ldots, K + 1} \sqrt{\sum_{l = 1}^L \|A^*_{\eta_k}[l] - A^*_{\eta_{k-1}}[l]\|^2_2} \quad \mbox{and} \quad \Delta = \min_{k = 1, \ldots, K + 1} (\eta_{k} - \eta_{k-1}).
			\end{equation}
			In addition, suppose that $0<\kappa < C_\kappa < \infty$ for some absolute constant $C_{\kappa} > 0$.
	\end{itemize}	
\end{assumption} 

\Cref{assume:AR high dim coefficient}(\textbf{a}) and (\textbf{b}) are imposed to guarantee that the \lasso estimators in \eqref{eq:VARD oracle} exhibit good performance, while \Cref{assume:AR high dim coefficient}(\textbf{c}) can be interpreted as a signal-to-noise ratio condition for detecting and estimating the location of the change points.  We further elaborate on these conditions next.
	\begin{itemize}
	\item \textbf{Sparsity.}	  The set $S$ appearing in \Cref{assume:AR high dim coefficient}(\textbf{a}) is a superset of the union of all the nonzero entries of all the coefficient matrices.  If, alternatively, the sparsity parameter is defined as $d_0 = \max_{t = 1, \ldots, n}\left|S^t\right|$, where $S^t \subset \{1, \ldots, p\}^{\times 2}$ and $(A_t^*[l])(i, j) = 0$, for all $(i, j) \in  \{1, \ldots, p\}^{\times 2} \setminus S^t$, $l = 1, \ldots, L$, then the signal-to-noise ratio in \eqref{eq:snr-vard} and the localization error rate in \Cref{thm-var-d} would change correspondingly, by replacing the sparsity level $d_0$  with $Kd_0$.  
	\item \textbf{Spectral Density.}  The spectral density  condition  in \Cref{assume:AR high dim coefficient}(\textbf{b}) is identical to Assumption~2.1 and the assumption in Proposition~3.1 in \cite{basu2015regularized}, which pertained to a stable VAR process without change points.  As pointed out in \cite{basu2015regularized}, this holds for a large class of general linear processes, including stable ARMA processes.  
	\item \textbf{Signal-to-noise ratio.}  We assume $\kappa$ is upper bounded for stability, but we allow $\kappa \to 0$ as $n \to \infty$, which is more challenging in terms of detecting the change points.  Since $\kappa <C_\kappa $, \eqref{eq:snr-vard} also implies that
		\[
			\Delta \geq C'_{\mathrm{SNR}} d_0^2 K \log^{1+\xi}(n\vee p), 
		\]
		where $C'_{\mathrm{SNR}} =C_{\mathrm{SNR}}C_{\kappa}^{-2}$.  For convenience, we will assume without loss of generality that $C_{\kappa} = 1$.  	
		
		To facilitate the understanding of the signal-to-noise ratio condition, consider the case that $K = d_0 = 1$, \eqref{eq:snr-vard} becomes
		\[
			\Delta \kappa^2 \gtrsim \log^{1 + \xi}(n \vee p),
		\]	
		which matches be the minimax optimal signal-to-noise ratio (up to constants and logarithmic terms) for the univariate mean change point detection problem \citep[see e.g.][]{chan2013, FrickEtal2014, wang2018univariate}.
	
	 \Cref{assume:AR high dim coefficient}(\textbf{c}) can be expressed using a normalized jump size
		\[
			\kappa_0 = \kappa/\sqrt{d_0},
		\]
		which leads to the equivalent signal-to-noise ratio condition
		\[%begin{equation}\label{eq-snr-var1-2}
			\Delta \kappa_0^2 \geq C_{\mathrm{SNR}} d_0 K \log^{1+\xi}(n \vee p).
		\]%end{equation}
	Similar conditions are required in other change point detection problems, including high-dimensional mean change point detection \citep{wang2018high}, high-dimensional covariance change point detection \citep{wang2017optimal}, sparse dynamic network change point detection \citep{wang2018optimal}, high-dimensional regression change point detection \citep{wang2019statistically}, to name but a few.  We remark that in these papers, which deploy variants of the wild binary segmentation procedure \citep{fryzlewicz2014wild}, additional knowledge is needed in order to eliminate the dependence of the term $K$ in the signal-to-noise ratio condition.  We refer the reader to \cite{wang2018optimal} for more discussions regarding this point.
	
The constant $\xi$ is needed to guarantee consistency and can be set to zero if $\Delta = o(n)$.  We may instead replace it by a weaker condition of the form
	\[
		\Delta \kappa^2 \gtrsim C_{\mathrm{SNR}} d_0^2 K \{\log(n \vee p) + a_n\},
	\]
	where $a_n \to \infty$ arbitrarily slow as $n \to \infty$.  We stick with the signal-to-noise ratio condition \eqref{eq:snr-vard} for simplicity.
	\end{itemize}

\subsection{The consistency of the dynamic programming approach}\label{sec-main-results}

In our first result, whose proof is given in \Cref{appendix:var 1}, we demonstrate consistency of the dynamic programming approach introduced in \Cref{sec-methods}, under the assumptions listed in \Cref{sec-assumptions}. 
 
\begin{theorem} \label{thm-var-d}
Assume \Cref{assume:AR change point} and let $L\in \mathbb Z^+$ be constant. Them, under \Cref{assume:AR high dim coefficient}, the change point estimators $\{\widehat {\eta}_k\}_{k = 1}^{\widehat{K}}$ from the dynamic programming approach in \Cref{algorithm:DP} with tuning parameters 
	\begin{equation}\label{eq-tuning-para-thm1}
		\lambda = C_{\lambda} \sqrt{\log(n\vee p)} \quad \text{and} \quad \gamma = C_\gamma (K+1) d_0^2\log(n \vee p)   	
	\end{equation}
	are such that
	\[
		\mathbb{P}\left\{\widehat K = K \quad \mbox{and} \quad \max_{k = 1, \ldots, K}|\widehat {\eta}_k - \eta_k| \leq \frac{KC_{\epsilon} d^2_0 \log(n\vee p)}{\kappa^2} \right\} \geq 1 -2 (n\vee p)^{-1},
	\]
		where $C_{\lambda}, C_{\gamma}, C_{\epsilon} > 0$ are absolute constants depending only on $\mathcal{M}$, $\mathfrak{m}$ and $L$.
\end{theorem}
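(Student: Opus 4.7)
The plan is to analyze \Cref{algorithm:DP} via the standard ``good event plus optimality'' framework for penalized dynamic programming. First I would define a high-probability event $\mathcal{A}$ on which the \lasso estimator \eqref{eq:VARD oracle} satisfies uniform prediction and estimation bounds simultaneously over all integer sub-intervals $I \subset [1, n]$. The two ingredients needed are a restricted eigenvalue (RE) condition for the Gram matrix built from the stacked lagged vectors $(X_{t-1}^{\top}, \ldots, X_{t-L}^{\top})^{\top}$, and a deviation bound of order $\sqrt{|I| \log(n\vee p)}$ on the cross-covariance between those regressors and the innovations $\varepsilon_t$. On intervals $I$ lying inside a single stationary piece, these follow from the spectral density bounds of \Cref{assume:AR high dim coefficient}(\textbf{b}) together with the Hanson--Wright and martingale concentration developed in \cite{basu2015regularized}; on intervals straddling change points I would decompose the Gram matrix piecewise and take a union bound. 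A further union bound over the $O(n^2)$ candidate intervals inflates the log factor but not the rate, justifying the choice $\lambda = C_\lambda \sqrt{\log(n\vee p)}$ in \eqref{eq-tuning-para-thm1}.

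The second step is to derive, on $\mathcal{A}$, three deterministic estimates for the loss $\mathcal{L}(I)$ defined in \eqref{eq:VARD likelihood}. (i) If $I$ contains no true change point, then $\mathcal{L}(I) \leq \mathcal{L}^*(I) + C d_0 \log(n\vee p)$, where $\mathcal{L}^*(I)$ is the same loss evaluated at the true coefficient matrices $\{A^*[l]\}_{l=1}^L$ of the piece. (ii) If $I = [s, e]$ contains exactly one change point $\eta_k$, then
\[
\mathcal{L}(I) \;\geq\; \mathcal{L}^*(I) \;+\; c\,\frac{(\eta_k - s)(e - \eta_k)}{e - s}\,\kappa^2 \;-\; C d_0 \log(n\vee p).
\]
(iii) An interval containing $r \geq 2$ change points is reduced inductively to case (ii) on sub-intervals. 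Bound (i) is the standard \lasso prediction-error bound under the RE condition; bound (ii) is obtained by comparing the best single-regime fit on $I$ against the two-regime oracle fit and absorbing the cross-term via (i).

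With (i)--(iii) in hand, the third step exploits the optimality of $\widehat{\mathcal{P}}$ in \eqref{eq-wide-p}: comparing against the oracle partition $\mathcal{P}^* = \{[\eta_{k-1}, \eta_k-1]\}_{k=1}^{K+1}$ gives $\sum_{I \in \widehat{\mathcal{P}}} \mathcal{L}(I) + \gamma |\widehat{\mathcal{P}}| \leq \sum_{I \in \mathcal{P}^*} \mathcal{L}(I) + \gamma(K+1)$. Over-partitioning ($\widehat{K} > K$) adds at least one $\gamma$ while reducing $\sum \mathcal{L}(I)$ by only $O(d_0 \log(n\vee p)) \ll \gamma$ in view of (i); under-partitioning ($\widehat{K} < K$) leaves an $I \in \widehat{\mathcal{P}}$ containing a well-interior change point, producing by (ii) a loss excess $\gtrsim \Delta \kappa^2 \gg \gamma$ that the signal-to-noise condition \eqref{eq:snr-vard} forbids. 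Hence $\widehat{K} = K$. For the localization rate, if some $\widehat{\eta}_k$ were further than $K C_\epsilon d_0^2 \log(n\vee p)/\kappa^2$ from its true match, applying (ii) to the interval of $\widehat{\mathcal{P}}$ straddling the corresponding true change point yields a loss excess strictly larger than the slack $\gamma + K\cdot C d_0 \log(n\vee p)$ afforded by the inequality above, contradicting the optimality of $\widehat{\mathcal{P}}$.

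The main obstacle will be establishing the uniform RE and deviation bounds simultaneously over all $O(n^2)$ candidate intervals, particularly the short intervals whose length is just above the effective threshold and intervals that straddle change points, where the process is genuinely non-stationary and the spectral-density argument of \cite{basu2015regularized} does not apply verbatim. The secondary subtlety, explicitly flagged by the authors, is preventing false discoveries on such short intervals: the $\sqrt{e - s - L + 1}$ scaling inside the penalty in \eqref{eq:VARD oracle} must yield a length-independent effective $\lambda$, and bound (i) must remain valid down to intervals of length $\Theta(d_0^2 \log(n\vee p)/\kappa^2)$. The interplay of these two issues is what drives the extra factor of $d_0$ appearing both in the signal-to-noise condition \eqref{eq:snr-vard} and in the localization bound.
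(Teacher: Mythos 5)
Your overall skeleton---construct a uniform good event for the \lasso, turn it into deterministic upper and lower bounds on $\mathcal{L}(I)$ depending on how many true change points $I$ contains, then feed these into the optimality inequality for $\widehat{\mathcal{P}}$---is the right kind of argument and is close in spirit to the paper's proof. The paper organizes the conclusion differently: it proves, for \emph{any} minimizer $\widehat{\mathcal{P}}$, a set of structural properties (each piece of $\widehat{\mathcal{P}}$ containing one, two, or more than two change points is tightly constrained, and consecutive pieces cannot both be ``pure''), which yields $K \leq |\widehat{\mathcal{P}}| - 1 \leq 3K$ (\Cref{proposition:dp step 1}), and then a separate, more delicate bookkeeping argument comparing $\widehat{\mathcal{P}}$, the oracle partition, and their common refinement pins down $|\widehat{\mathcal{P}}| = K+1$ (\Cref{prop-2}). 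Your version, a single direct comparison of $\widehat{\mathcal{P}}$ against $\mathcal{P}^*$, would need equally careful accounting and is plausible but not clearly simpler.

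The genuine gap is in the treatment of the \lasso estimator on intervals $I$ that straddle change points. Your estimate (ii) compares ``the best single-regime fit on $I$'' against the two-regime oracle, but to control the noise cross-term $\sum_{t\in I}\varepsilon_{t+1}^\top(\widehat{A}^\lambda_I - A^*_{I_i})X_t$ you need a bound on $\|\widehat{A}^\lambda_I(\mathcal{S}^c)\|_1$, i.e. you need to know that $\widehat{A}^\lambda_I$ is approximately sparse even though $I$ spans multiple regimes. The paper establishes this by first identifying the \emph{population target} of the \lasso on a generic interval: the unique matrix $A^*_I$ solving $\bigl(\sum_{t\in I}\mathbb{E}[X_tX_t^\top]\bigr)(A^*_I)^\top = \sum_{t\in I}\mathbb{E}[X_tX_t^\top](A^*_t)^\top$. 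Since the marginal covariance $\mathbb{E}[X_tX_t^\top]$ itself changes across regimes, $A^*_I$ is \emph{not} the time-average $|I|^{-1}\sum_{t\in I}A^*_t$; the paper's \Cref{lemma:prorpulation AR sparsity} shows nonetheless that $\|A^*_I\|_0\leq 4d_0^2$ and $\|A^*_I\|_{\mathrm{op}}$ is bounded, and \Cref{lemma: AR oracle lasso} then delivers an oracle inequality for $\widehat{A}^\lambda_I$ \emph{relative to $A^*_I$} that holds uniformly over all intervals. This is exactly the ``uniformly good enough'' property the sketch in \Cref{sec-rm} emphasizes, and it is the load-bearing step that your proposal does not supply. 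Without it, your proposal cannot establish the deviation bound (i)/(ii) on straddling intervals, since you have no handle on the magnitude or sparsity of $\widehat{A}^\lambda_I$ there.

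Two smaller issues: your bound (i) is stated with an error term $Cd_0\log(n\vee p)$, which is the rate on a pure interval ($\|A^*_t\|_0\leq d_0$); on intervals containing change points the population target has $\|A^*_I\|_0\leq 4d_0^2$, so the corresponding oracle error is $Cd_0^2\log(n\vee p)$, and the theorem's localization rate $Kd_0^2\log(n\vee p)/\kappa^2$ reflects this larger exponent. Relatedly, you do not trace through where the factor $K$ in the localization bound comes from: it enters through the choice $\gamma = C_\gamma(K+1)d_0^2\log(n\vee p)$, and the localization bound in \Cref{lemma:one change point} is $C_\epsilon(\lambda^2 d_0^2 + \gamma)/\kappa^2$, which is dominated by the $\gamma$ term.
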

	
\begin{remark}
We do not assume that \Cref{algorithm:DP} admits a unique optimal solution $\widehat {\mathcal P}$.  In fact the consistency result in \Cref{thm-var-d} holds for {\it any} minimizer of DP.  %This is common in the high-dimensional statistical literature \citep[e.g.][]{negahban2012unified} that all the empirical minimizers of the penalized loss function are known to be consistent in various settings.   	
\end{remark}

\Cref{thm-var-d} implies that with probability tending to 1 as $n$ grows,
	\[
		\max_{k = 1, \ldots, K}\frac{|\widehat{\eta}_k - \eta_k|}{\Delta} \leq \frac{KC_{\epsilon} d^2_0 \log(n \vee p)}{\kappa^2 \Delta} \leq \frac{C_{\epsilon}}{C_{\mathrm{SNR}}\log^{\xi}(n \vee p)} \to 0,
	\]
	where the second inequality follows from \Cref{assume:AR high dim coefficient}(\textbf{c}).  Thus, the localization error rate converges to zero in probability, yielding consistency.  
	
The tuning parameter $\lambda$ affects the performance of the \lasso estimator, as elucidated in \Cref{prop:deviation AR change point}.  The second tuning parameter $\gamma$ prevents overfitting while searching the optimal partition as a solution to the problem \eqref{eq-wide-p}.  In particular, $\gamma$ is determined by the squared $\ell_2$-loss of the \lasso estimator and is of order $\lambda^2 d_0^2$.  
	
We now compare the results in \Cref{thm-var-d} with the guarantees established in \cite{safikhani2017joint}.
	\begin{itemize}
	\item In terms of the localization error rate, \cite{safikhani2017joint} proved  consistency of their methods by  assuming  that the minimal magnitude of the structural changes $\kappa$ is a sufficiently large constant independent of $n$, while our dynamic programming approach is valid even when $\kappa$ is allowed to decrease with the sample size $n$. In addition, \cite{safikhani2017joint} achieve the localization error bound of order 
		\[
			K \widetilde{\Delta} d_n^2,
		\]	
		where $\widetilde{\Delta}$ satisfies $K^2d_0^2 \log(p) \lesssim \widetilde{\Delta} \lesssim \Delta$.   Translating to our notation, their best localization error is at least
		\[
			K^5 d_0^4\log(p),
		\]
		which is larger than our rate $Kd_0^2 \log(n\vee p)/\kappa^2$ even in their setting where $\kappa$ is a constant.
	\item In terms of methodology, \cite{safikhani2017joint} adopted a two-stage procedure: first, a penalized least squares estimator with a total variation penalty is deployed to obtain an initial estimator of the change points; then an information-type criterion is applied to identify the significant estimators and to remove any false discoveries.  The change point estimators in \cite{safikhani2017joint} are selected from fused \lasso estimators, which are sub-optimal for change point detection purposes, especially when the size of the structure change $\kappa$ is small \citep[see e.g.][]{lin2017sharp}.  In addition, the theoretically-valid selection criterion proposed in \cite{safikhani2017joint} has a computational cost growing exponentially in $\widetilde K$, where $\widetilde K$ is the number of change points estimated by the fused \lasso and in general one has that $\widetilde K \gg K$.
	\end{itemize}
 
\subsection{The improvement of the post processing through group \lasso}

We now show that the PGL refinement procedure (\Cref{algorithm:LR}) applied to the estimators $\{\widetilde{\eta}_k\}_{k = 1}^{\widehat{K}}$ obtained wih \Cref{algorithm:DP} delivers a smaller localization rate with no direct dependence on $K$.

\begin{theorem}\label{theorem:lr}
Assume the same conditions in \Cref{thm-var-d}.  Let $\{\widetilde{\eta}_k\}_{k = 1}^{K}$ be any set of time points satisfying
	\begin{equation}\label{eq-lr-cond}
		\max_{k = 1, \ldots, K} |\widetilde{\eta}_k - \eta_k| \leq \Delta/7.
	\end{equation}
	Let $\{\widehat{\eta}_k\}_{k = 1}^{K}$ be the change point estimators generated from \Cref{algorithm:LR}, with $\{\widetilde{\eta}_k\}_{k = 1}^{K}$ and the tuning parameter 
	\[
		\zeta = C_{\zeta}\sqrt{\log(n \vee p)},
	\]
	as inputs.  Then
	\[
		\mathbb{P}\left\{\max_{k = 1, \ldots, K}|\hat{\eta}_k - \eta_k| \leq \frac{C_{\epsilon} d_0 \log(n \vee p)}{\kappa^2} \right\} \geq 1 - (n\vee p)^{-1},
	\]
	where $C_{\zeta}, C_{\epsilon}> 0$ are absolute constants depending only on $\mathcal{M}$, $\mathfrak{m}$ and $L$.
\end{theorem}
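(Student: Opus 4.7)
The plan is to analyze each refined estimate $\widehat{\eta}_k$ independently on its localized interval $(s_k, e_k)$. The input condition $|\widetilde{\eta}_k - \eta_k| \leq \Delta/7$ combined with the $1/3$--$2/3$ shrinkage in \Cref{algorithm:LR} guarantees that $(s_k, e_k)$ contains the single true change point $\eta_k$ with $\min\{\eta_k - s_k,\, e_k - \eta_k\} \geq 11\Delta/21$ and $\min\{s_k - \eta_{k-1},\, \eta_{k+1} - e_k\} \geq 4\Delta/21$. Hence the analysis reduces to a collection of isolated single-change-point subproblems, one per $k$, and the final error bound will follow by a union bound over $k$.

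Condition on a high-probability event on which (i) the noise cross-terms $\sum_t \varepsilon_t X_{t-l}^\top$ are bounded in entrywise $\ell_\infty$-norm by $O(\sqrt{\log(n \vee p)})$, uniformly over subintervals of $(s_k, e_k)$, and (ii) a restricted eigenvalue (RE) condition holds uniformly for the Gram matrices $\sum_{t \in J} X_{t-1} X_{t-1}^\top$ over subintervals $J$ of length $\gtrsim d_0 \log(n \vee p)$; both follow from the spectral density bounds in \Cref{assume:AR high dim coefficient}(\textbf{b}) via Hanson--Wright and Bernstein-type inequalities for stable VAR processes, as in \cite{basu2015regularized}. Plug the oracle triple $(A^*_{\eta_{k-1}},\, A^*_{\eta_k},\, \eta_k)$ into \eqref{eq-g-lasso} as a competitor for the minimizer $(\widehat A,\, \widehat B,\, \widehat \eta_k)$, write $r = |\widehat \eta_k - \eta_k|$ and assume WLOG $\widehat \eta_k \leq \eta_k$. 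Expanding $X_t$ via the true model on each side of $\eta_k$, the resulting basic inequality decomposes into: (a) quadratic forms in $\widehat A - A^*_{\eta_{k-1}}$ and $\widehat B - A^*_{\eta_k}$, lower-bounded by RE; (b) noise cross-terms, absorbed using the entrywise $\ell_\infty$ deviation bound together with the group-Lasso $\ell_1$-type control; and crucially (c) a mismatch contribution on the disputed segment $(\widehat \eta_k, \eta_k]$ which, by RE applied on that segment, is lower-bounded by approximately $r\,\|A^*_{\eta_{k-1}} - \widehat B\|_2^2$.

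A standard group-Lasso argument first yields the coefficient bounds $\|\widehat A - A^*_{\eta_{k-1}}\|_2^2,\, \|\widehat B - A^*_{\eta_k}\|_2^2 \lesssim d_0 \log(n \vee p)/\Delta$, which by the signal-to-noise condition \Cref{assume:AR high dim coefficient}(\textbf{c}) is much smaller than $\kappa^2$. Hence $\|A^*_{\eta_{k-1}} - \widehat B\|_2 \geq \kappa/2$ by the triangle inequality, so the mismatch contribution (c) is at least of order $r \kappa^2$, while the surplus available on the right-hand side of the basic inequality is of order $d_0 \log(n \vee p)$. Rearranging yields the claimed bound $r \kappa^2 \lesssim d_0 \log(n \vee p)$. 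The improvement over \Cref{thm-var-d} from $K d_0^2$ to $d_0$ has two sources: localization removes the $K$ factor since each interval is treated independently, while the group-Lasso penalty enforces a shared sparsity support between $\widehat A$ and $\widehat B$, compressing the effective sparsity from ``$d_0^2$ on each side'' into a single group of size $d_0$.

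The main obstacle is the joint optimization over $\eta$ and the coupled group penalty $\sum_{l,i,j} \sqrt{(\eta - s_k)(A[l])_{ij}^2 + (e_k - \eta)(B[l])_{ij}^2}$: because the weights depend on $\eta$, a fixed-$\eta$ Lasso analysis does not apply verbatim, and one must establish the coefficient error bounds uniformly over candidate $\eta$, exploiting that the penalty is Lipschitz in $\eta$ on the relevant scale. A secondary subtlety is that when $r > 0$ the estimator $\widehat B$ is computed on an interval containing a fraction $r/(e_k - \widehat \eta_k)$ of wrong-regime samples, so one must show this contamination does not spoil $\widehat B$'s approximation of $A^*_{\eta_k}$; this is typically handled by a two-step argument that first establishes a crude bound such as $r \leq \Delta/10$ and then bootstraps to the sharp rate. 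Combined with uniform RE down to the subinterval length $d_0 \log(n \vee p)/\kappa^2$, these are the technical pieces requiring the most care.
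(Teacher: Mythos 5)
Your proposal is correct and follows essentially the same route as the paper's proof. Both plug in the oracle triple $(A^*_{\eta_{k-1}}, A^*_{\eta_k}, \eta_k)$ as a competitor in the group-Lasso objective, bound the resulting basic inequality via restricted-eigenvalue and noise-deviation bounds, derive coefficient error bounds of order $d_0\log(n\vee p)/\Delta$ for the estimator whose segment lies entirely in a single regime, invoke the SNR condition and the triangle inequality to show that the mismatched coefficient difference (the paper's $\|B^* - \widehat A\|_2$, your $\|A^*_{\eta_{k-1}} - \widehat B\|_2$) is bounded below by a constant multiple of $\kappa$, and then extract $r\kappa^2 \lesssim d_0\log(n\vee p)$ from the disputed segment of length $r$. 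Your diagnosis of the two sources of improvement over \Cref{thm-var-d} (localization eliminating $K$; the group penalty compressing the effective sparsity to a single factor of $d_0$ via Cauchy--Schwarz on the $|S|=d_0$ active groups) is also accurate.

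The one organizational device you do not name, but which lets the paper avoid both subtleties you flag at the end, is the piecewise-constant sequence $\widehat\alpha_t$ that equals $\widehat A$ up to $\widehat\eta_k$ and $\widehat B$ afterward, together with $\Delta_t := \widehat\alpha_t - A^*_t$ (which has at most three constant pieces). The penalty in \eqref{eq-g-lasso} becomes $\zeta\sum_{l,i,j}\sqrt{\sum_t(\widehat\alpha_t[l]_{ij})^2}$, so the $\eta$-dependence is absorbed into the penalty and the basic inequality delivers the single scalar bound $\sum_t\sum_l\|\Delta_t[l]\|_2^2 \lesssim d_0\zeta^2$. The uniformity over candidate $\eta$ is then handled not by a Lipschitz-in-$\eta$ argument but by a discrete union bound over vectors with a bounded number of jumps (\Cref{lem-wang-lem-3}). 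Likewise, no two-step bootstrap is needed to control the contamination of the estimator on the disputed segment: the identity $\sum_t\|\Delta_t\|_2^2 = |I_1|\|\widehat A - A^*_{\eta_{k-1}}\|_2^2 + |I_2|\|\,\cdot\,\|_2^2 + |I_3|\|\widehat B - A^*_{\eta_k}\|_2^2$ yields the coefficient bound for the "clean" estimator directly from its uncontaminated segment (which has length at least $\Delta/3$), and the $r\kappa^2$ bound directly from the middle term, in a single pass.
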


As we have already shown in \Cref{thm-var-d}, the estimators of the change points obtained of DP detailed in \Cref{algorithm:DP} satisfy, with high probability, condition \eqref{eq-lr-cond} and therefore can serve as qualified inputs to \Cref{algorithm:LR}.  However, we would like to emphasize that \Cref{theorem:lr} holds even when the inputs are a sequence of inconsistent initial estimators of the change points.  

Compared to the localization errors given in \Cref{thm-var-d}, the estimators refined by the PGL algorithm provide a substantial improvement by reducing the localization rate by a factor of $d_0 K$.  We provide some intuitive explanations for the success of the PGL algorithm.
	\begin{itemize}
		\item The intuition of the group \lasso penalty in \eqref{eq-g-lasso} is as follows.  Suppose that for simplicity $L = 1$.  The use of the penalty term
			\[
				\zeta \sum_{i, j = 1}^p  \sqrt{(\eta - s_k) A _{ij}^2 + (e_k - \eta) B _{ij}^2}
			\]
			implies that for any $(i, j)$th entry, the group lasso solution is such that either the $A_{ij}$'s' and $B_{ij}$'s are simultaneously $0$ or they are simultaneously nonzero.  This penalty effectively conforms  to \Cref{assume:AR high dim coefficient}({\bf a}), which requires that all the population coefficient matrices share the common support.
		\item Even if the coefficient matrices do not share a common support, the group \lasso penalty \eqref{eq-g-lasso} still works.  Suppose again for simplicity that  $L = 1$.  In addition, let $A^*_t$, $t \in [1,\eta_1)$ have support $S_1$ and  $A^*_t$, $t\in [\eta_1, \eta_2)$ have support $S_2 \neq S_1$.  Then the common support of $A_t^*$, $t \in [1, \eta_2)$ is $S_1 \cup S_2$.  Since $|S_1 \cup S_2| \le 2 d_0$, the localization rate in \Cref{theorem:lr} will only be inflated by a constant factor. In fact, the numerical experiments in \Cref{sec-numerical} also suggest that the PGL algorithm is robust even when the coefficient matrices do not share a common support.
		\item The condition \eqref{eq-lr-cond} is to ensure that there is one and only one true change point in every working interval used by the local refinement algorithm.  The true change points can then be estimated separately using $K$ independent searches, in such a way that the final localization rate does not depend on the number of searches. 
	\end{itemize}

\section{Sketch of the proofs}\label{sec-rm}

In this section we provide a high-level summary of the technical arguments we use to prove \Cref{thm-var-d} \Cref{theorem:lr}.  The complete proofs are given in the Appendices.  The main arguments consists of three main three steps.  Firstly, we will express a VAR$(L)$ process with arbitrary but fixed lag  $L$ into an appropriate VAR(1) process (this is a standard reduction technique).  Then we will lay down the key ingredients  needed to prove \Cref{thm-var-d} and, lastly, we will pinpoint the main task to prove \Cref{theorem:lr}.

\subsection*{Transforming VAR($L$) processes to VAR(1) processes}\label{sec-trans}

For any general $L \in \mathbb{Z}^+$, a VAR($L$) process can be rewritten as a VAR(1) process in the following way.  Assuming \Cref{assume:AR change point}, let
	\begin{equation}\label{eq-transform-l-to-1}
		Y_t = (\widetilde{X}_t^{\top}, \ldots, \widetilde{X}_{t - L + 1}^{\top})^{\top},
	\end{equation}
	where, for $l \in \{0, \ldots, L-1\}$, $\widetilde{X}_{t-l} = X_{t-l}$ if $t - \max\{\eta_k:\, \eta_k \leq t\} \geq l$, and $\widetilde{X}_t$ is the unobserved random vector from drawn from $\mathcal{X}^{\infty}_k$, $k = \max\{j: \eta_j \leq t\}$ otherwise.  In addition, let $\zeta_t = (\varepsilon_t^{\top}, \ldots, \varepsilon_{t-L+1}^{\top})^{\top}$ and 
	\begin{equation}\label{eq-mathcal-A}
		\mathcal{A}^*_t = \left(\begin{array}{ccccc}
			A^*_t[1] & A^*_t[2] & \cdots & A^*_t[L-1] & A^*_t[L] \\
			I & 0 & \cdots & 0 & 0 \\
			0 & I & \cdots & 0 & 0 \\
			\vdots & \vdots & \ddots & \vdots & \vdots \\
			0 & 0 & \cdots & I & 0 
		\end{array}
		\right).
	\end{equation}
	Then we can rewrite \eqref{eq-var-d} as
	\begin{equation}\label{eq-Yt}
		Y_{t + 1} = \mathcal{A}^*_t Y_t + \zeta_{t+1},
	\end{equation}
	which is now a VAR(1) process.   We remark that the stability condition \eqref{eq-coef-cond-stability} is now equivalent to all the eigenvalues of $\mathcal{A}^*_t$ as defined in \eqref{eq-mathcal-A} having a modulus strictly less than $1$ \citep[e.g.][Rule (7) in Section A.6 in]{lutkepohl2005new}.  This further implies that \eqref{eq-coef-cond-stability} implies that $\|\mathcal{A}^*_t\|_{\mathrm{op}} \leq 1$.
	
The rest of the proof will be established based on this transformation, therefore, we assume $L = 1$.

\subsection*{Sketch of the Proof of \Cref{thm-var-d}}

\Cref{thm-var-d} is an immediate consequence of the following Propositions~\ref{proposition:dp step 1} and \ref{prop-2}.

\begin{proposition}\label{proposition:dp step 1}
Under all the conditions in \Cref{thm-var-d}, the following holds with probability at least $1 - (n \vee p)^{-1}$. 
	\begin{itemize}
		\item [(i)] For each interval $\widehat I = (s, e] \in \widehat{\mathcal{P}}$ containing one and only one true change point $\eta$, it holds that
			\[
				\min\{e - \eta,\, \eta - s\} \leq C_{\epsilon} \left(\frac{d_0 ^2 \lambda^2 + \gamma}{\kappa^2} \right);
			\]
		\item [(ii)] for each interval $\widehat I = (s, e] \in \widehat{\mathcal{P}}$ containing exactly two true change points, say  $\eta_1 < \eta_2$, it holds that
			\[
				\max\{e - \eta_2, \eta_1 - s\} \leq C_{\epsilon} \left(\frac{d_0 ^2 \lambda^2 + \gamma}{\kappa^2} \right);
			\]  
		\item [(iii)] for any two consecutive intervals $\widehat I, \widehat {J} \in \widehat{ \mathcal P}$, the interval $\widehat I \cup \widehat{J}$ contains at least one true change point; and
		\item [(iv)] no interval $\widehat I \in \widehat{\mathcal{P}}$ contains strictly more than two true change points.
	 \end{itemize}
\end{proposition}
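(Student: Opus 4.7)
The plan is to prove all four items by contradiction, comparing $\widehat{\mathcal P}$ to carefully designed alternative partitions $\mathcal{P}'$ and invoking the optimality inequality
\begin{equation*}
\sum_{I \in \widehat{\mathcal P}} \mathcal{L}(I) + \gamma |\widehat{\mathcal P}| \,\leq\, \sum_{I \in \mathcal{P}'} \mathcal{L}(I) + \gamma |\mathcal{P}'|.
\end{equation*}
The key preliminary is to establish, on a high-probability event $\mathcal{E}$ captured by the Lasso deviation results (Proposition~\ref{prop:deviation AR change point} in the appendix) and uniformly over intervals $I \subset \{1,\ldots,n\}$, an upper bound of the form
\begin{equation*}
\mathcal{L}(I) \,\leq\, \textstyle\sum_{t \in I}\|\varepsilon_t\|_2^2 + C\,d_0^2 \lambda^2
\end{equation*}
whenever $I$ lies inside a single stationary segment, together with the matching lower bound
\begin{equation*}
\mathcal{L}(I) \,\geq\, \textstyle\sum_{t \in I}\|\varepsilon_t\|_2^2 \,+\, c\sum_{j} |I_j|\,\bigl\|A^*_{\eta_{k+j}} - \bar A\bigr\|_2^2 \,-\, C\,d_0^2 \lambda^2
\end{equation*}
when $I$ straddles change points $\eta_{k+1}<\cdots<\eta_{k+r}$ with maximal constant-parameter sub-intervals $I_1,\ldots,I_{r+1}$ and $\bar A$ their length-weighted mean. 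Both inequalities follow from standard Lasso analysis once the restricted-eigenvalue property is derived from the spectral density bound $\mathfrak m>0$ in Assumption~\ref{assume:AR high dim coefficient}(\textbf{b}); a union bound over all $O(n^2)$ candidate intervals is absorbed into the $\log(n \vee p)$ factor in $\lambda$.

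With these two bounds in hand, part (iv) is the shortest: if some $\widehat I\in\widehat{\mathcal P}$ were to contain three or more true change points, refining $\widehat I$ at one of its interior change points yields a $\mathcal P'$ whose aggregate loss drops by at least $c\kappa^2\Delta - C d_0^2 \lambda^2$ (each newly-created stationary piece has length $\geq \Delta$) while its penalty increases only by $\gamma$; the signal-to-noise condition \eqref{eq:snr-vard} combined with $\gamma\asymp (K+1)\, d_0^2\log(n\vee p)$ forces $\kappa^2\Delta \gg \gamma + C d_0^2 \lambda^2$ and contradicts optimality. Part (iii) is equally short: if the union of two consecutive intervals $\widehat I_1,\widehat I_2\in\widehat{\mathcal P}$ contained no change point, the upper bound gives $\mathcal{L}(\widehat I_1\cup\widehat I_2) \le \mathcal{L}(\widehat I_1)+\mathcal{L}(\widehat I_2) + C d_0^2\lambda^2$, while merging them saves $\gamma$, and once again $\gamma$ dominates.

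Parts (i) and (ii) are the most delicate and I would handle them in parallel. For (i), consider $\widehat I=(s,e]$ containing exactly one change point $\eta$, set $\ell=\min\{e-\eta,\eta-s\}$, and refine $\widehat{\mathcal P}$ by splitting $\widehat I$ at $\eta$. The length-weighted mean $\bar A$ on $\widehat I$ differs from each of $A^*_{\eta_k},A^*_{\eta_{k+1}}$ by amounts whose squared norms combine into a contribution proportional to $\ell\,\kappa^2$, so the lower bound applied to $\widehat I$ together with the upper bound applied to each of $(s,\eta]$ and $(\eta,e]$ shows that the loss drops by at least $c\ell\kappa^2 - C d_0^2\lambda^2$; optimality then gives $c\ell\kappa^2\le \gamma + C d_0^2\lambda^2$, which rearranges to the bound in~(i). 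For (ii) with two change points $\eta_1<\eta_2$ in $\widehat I$, splitting at $\eta_1$ (respectively $\eta_2$) produces the symmetric bound on $\eta_1-s$ (respectively $e-\eta_2$) by the same argument; parts (iii)-(iv) guarantee that two is the largest number of change points to consider.

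I expect the main obstacle to be the uniform validity of the two bounds above, especially the lower bound on short intervals straddling change points. Standard Lasso theory for VAR produces restricted eigenvalue and deviation inequalities on \emph{stationary} intervals of sufficient length, whereas here we need parallel control on intervals whose design exhibits non-stationary autocovariance. Concretely the proof must supply a uniform lower bound on the minimum eigenvalue of the empirical Gram matrices over all sufficiently long sub-intervals, obtained through a Hanson--Wright-type concentration for Gaussian quadratic forms combined with the spectral floor $\mathfrak m$, together with an upper bound on deviation terms $\sum_{t\in I}\langle\varepsilon_t,v\rangle$ that is uniform in both $I$ and $v$ inside a suitable sparse cone; once these ingredients are in place, the decomposition of the squared-error loss into a noise term, a bias term proportional to the parameter mismatch, and the standard Lasso slack delivers the required inequalities.
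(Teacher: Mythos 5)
Your overall strategy — contradiction via the optimality of $\widehat{\mathcal{P}}$, combined with Lasso oracle and restricted-eigenvalue bounds uniform over intervals, and a bias decomposition — is essentially the same route the paper takes. The paper organizes the argument into four case-by-case lemmas (\Cref{lemma:one change point}--\Cref{lemma:three change point}), each applied via a union bound, whereas you unify them through explicit upper and lower bounds on $\mathcal{L}(I)$ relative to the noise level. Your handling of parts (i), (ii), and (iii) is sound and matches the paper's arguments.

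There is, however, a concrete gap in your argument for part (iv). You claim that splitting $\widehat I$ at one interior change point drops the aggregate loss by at least $c\kappa^2\Delta - Cd_0^2\lambda^2$, parenthetically asserting that ``each newly-created stationary piece has length $\geq \Delta$.'' But splitting at a single change point does not create stationary pieces: if $\widehat I$ contains three change points and you split at the middle one, both resulting halves still straddle a change point. Worse, the bias may not drop at all under a one-point split. Consider $\widehat I$ with four constant-parameter segments of equal length $\Delta$ and transition matrices $A, B, A, B$; the best-single-matrix bias on $\widehat I$ is $\Delta\kappa^2$ (attained at $\bar A = (A+B)/2$), and splitting at the middle change point yields two halves each with pattern $A, B$ and combined bias $\Delta\kappa^2/2 + \Delta\kappa^2/2 = \Delta\kappa^2$ — identical to before. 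Optimality comparison then gains nothing, and the contradiction fails. The paper's \Cref{lemma:three change point} instead compares $\mathcal{L}(\widehat I)$ to $\sum_j \mathcal{L}(I_j) + J\gamma$ by splitting at \emph{all} interior change points, and derives the contradiction from a lower bound on the bias of the interior segments (each of which has length $\geq\Delta$), which avoids this issue. You would need either to split at all change points as the paper does, or to argue that some particular choice of split (e.g.\ the first or last change point) always yields a quantitative bias drop — but that requires a dedicated argument you have not supplied.

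One further point worth flagging: your ``length-weighted mean $\bar A$'' is a valid choice for the bias \emph{lower} bound, since $\inf_B\sum_j |I_j|\,\|A^*_j - B\|_2^2$ is attained there. But the Lasso estimator $\widehat A^\lambda_I$ on a heterogeneous interval $I$ does \emph{not} target $\bar A$; it targets $A_I^*$ defined by the covariance-weighted system \eqref{eq:ar population 1}, whose sparsity is only $\leq 4 d_0^2$ (not $d_0$) — see \Cref{lemma:prorpulation AR sparsity}. This is precisely why the localization rate carries a $d_0^2$ rather than $d_0$ factor, and your preliminary upper bound $\mathcal{L}(I)\leq \sum_{t\in I}\|\varepsilon_t\|_2^2 + Cd_0^2\lambda^2$ (and the uniform oracle inequality feeding it) must be proved with this enlarged support in mind when $I$ contains change points.
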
 

The four cases in \Cref{proposition:dp step 1} are proved in Lemmas~\ref{lemma:one change point}, \ref{lemma:two change point}, \ref{lemma:no change point} and \ref{lemma:three change point}, respectively, and \Cref{proposition:dp step 1} is proved consequently.

\begin{proposition}\label{prop-2}
Under the same conditions of \Cref{thm-var-d}, if $K \leq |\widehat{\mathcal{P}}| \leq 3K$, then with probability at least $1 - (n \vee p)^{-1}$, it holds that $|\widehat{\mathcal{P}}| = K + 1$.	
\end{proposition}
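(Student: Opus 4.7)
I would first classify intervals: for $j \in \{0,1,2\}$, let $N_j$ denote the number of intervals of $\widehat{\mathcal{P}}$ containing exactly $j$ true change points. By Proposition~\ref{proposition:dp step 1}(iv) this is exhaustive, so $M := |\widehat{\mathcal{P}}| = N_0 + N_1 + N_2$ and $K = N_1 + 2N_2$, giving $M - K = N_0 - N_2$. It therefore suffices to rule out $M \leq K$ and $M \geq K + 2$.

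For the lower bound $M \geq K + 1$ the plan is a combinatorial absorption argument. Write $\widehat{\mathcal{P}} = \{(s_{j-1}, s_j]\}_{j=1}^M$ with $s_0 = 0$ and $s_M = n$. By Proposition~\ref{proposition:dp step 1}(i)-(ii), every change point $\eta_k$ is ``absorbed'' by at least one of the two endpoints of its containing interval, meaning that it lies within $\tau := C_\epsilon (d_0^2 \lambda^2 + \gamma)/\kappa^2$ of that endpoint; moreover for a two-change-point interval one obtains the left change point absorbed by the left endpoint and the right one by the right endpoint. Two facts then follow from the signal-to-noise condition \eqref{eq:snr-vard}: (a) no internal boundary $s_j$, $1 \leq j \leq M-1$, can absorb two distinct change points, since their distance would be at most $2\tau$ and $\tau \ll \Delta$; and (b) the external boundary $s_0 = 0$ cannot absorb $\eta_1$ (since $\eta_1 \geq \Delta + 1 \gg \tau$), and symmetrically $s_M = n$ cannot absorb $\eta_K$. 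Hence the $K$ change points are absorbed injectively by the $M - 1$ internal boundaries, forcing $K \leq M - 1$, i.e., $M \geq K + 1$.

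For the upper bound $M \leq K + 1$, I would compare $\widehat{\mathcal{P}}$ against the oracle partition $\mathcal{P}^* = \{(\eta_{k-1}, \eta_k]\}_{k=1}^{K+1}$. Optimality of $\widehat{\mathcal{P}}$ gives $F(\widehat{\mathcal{P}}) \leq F(\mathcal{P}^*)$, which rearranges to
\[
    \gamma(M - K - 1) \;\leq\; \sum_{I^* \in \mathcal{P}^*} \mathcal{L}(I^*) \;-\; \sum_{\widehat{I} \in \widehat{\mathcal{P}}} \mathcal{L}(\widehat{I}).
\]
Expanding each squared residual via $X_t = A_t^* X_{t-1} + \varepsilon_t$ cancels the common noise contribution $\sum_t \|\varepsilon_t\|^2$. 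What remains I would control on stationary subintervals via the Lasso deviation bound (Proposition~\ref{prop:deviation AR change point}), contributing $O(d_0 \log(n \vee p))$ per interval, and on the non-stationary $\widehat{I}$'s by the short-buffer estimate of length $\tau$ combined with $\|A^*_t - A^*_{t'}\|_F^2 \leq \kappa^2$, which contributes $O(\tau \kappa^2) = O(d_0^2 \log(n \vee p))$ per interval. Since $M \leq 3K$ by assumption, the right-hand side is at most $C K d_0^2 \log(n \vee p)$. Because $\gamma = C_\gamma K d_0^2 \log(n \vee p)$, this forces $M - K - 1 \leq C/C_\gamma < 1$ for $C_\gamma$ large enough, yielding $M \leq K + 1$.

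The hardest step will be to establish the per-interval Lasso deviation bounds uniformly over all subintervals that can arise in the DP search, despite the temporal dependence of the VAR dynamics. This is exactly where the VAR(1) reduction in \eqref{eq-transform-l-to-1} and the spectral density control in Assumption~\ref{assume:AR high dim coefficient}(\textbf{b}) come in, feeding into the Lasso concentration inequalities invoked above.
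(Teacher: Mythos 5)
Your lower-bound absorption argument (forcing $M \geq K+1$) is a valid way of extracting the inequality $K \leq |\widehat{\mathcal{P}}| - 1$ from Proposition~\ref{proposition:dp step 1}(i)--(ii) together with the signal-to-noise condition; the paper invokes this step implicitly in the proof of Theorem~\ref{thm-var-d} rather than inside the proof of Proposition~\ref{prop-2}, but the combinatorics are the same. So that half is fine.

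The upper-bound half has a genuine gap in the per-interval cost. You set $\tau = C_\epsilon(d_0^2\lambda^2 + \gamma)/\kappa^2$ and then claim $O(\tau\kappa^2) = O(d_0^2\log(n\vee p))$ per non-stationary interval. But with $\gamma = C_\gamma(K+1)d_0^2\log(n\vee p)$ one has $\tau\kappa^2 = C_\epsilon(d_0^2\lambda^2 + \gamma) = O(Kd_0^2\log(n\vee p))$ --- the localization radius already carries a factor of $\gamma$, hence of $K$. Summing over the up-to-$K$ non-stationary intervals of $\widehat{\mathcal{P}}$ yields a total error of order $K^2 d_0^2\log(n\vee p)$, and plugging this into $\gamma(M-K-1) \leq \text{RHS}$ gives only $M-K-1 \lesssim K/C_\gamma$, which does not force $M-K-1 < 1$ because $C_\gamma$ is a fixed constant while $K$ may diverge. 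In other words, any argument that uses the localization error $\tau$ (or the buffer length it determines) as the price of a non-stationary interval is doomed to be off by a factor of $K$. The paper avoids this by never touching $\tau$ in the budget: it compares $\widehat S_n$ not directly to the oracle but to the loss $S_n(\widehat\eta_1,\ldots,\widehat\eta_{\widehat K},\eta_1,\ldots,\eta_K)$ on the common refinement of $\widehat{\mathcal P}$ and the true partition, so that every piece $J_l$ is stationary, and then applies the key deviation bound $\LL^*(J_l) - \sum_{t\in J_l}\|X_{t+1}-\widehat A^\lambda_{\widehat I}X_t\|^2 \leq C d_0^2\lambda^2$, valid for any stationary $J_l$ inside any $\widehat I$ (Lemma~\ref{lem-case-5-prop-2-needed}). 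That bound is $\gamma$-free, giving a total slack of $C(K+\widehat K+1)d_0^2\lambda^2 = O(Kd_0^2\log(n\vee p))$, which is exactly enough to conclude $\widehat K = K$ once $C_\gamma$ is chosen large. To repair your argument you would need to replace the $\tau\kappa^2$ buffer estimate with this refined-partition device and a $\gamma$-free per-piece oracle inequality.
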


\begin{proof}[Proof of \Cref{thm-var-d}]
	It follows from \Cref{proposition:dp step 1} that, $K \leq |\widehat{\mathcal{P}}| - 1\leq 3K$.  This combined with \Cref{prop-2} completes the proof.
\end{proof}

The key insight for the DP approach is that, for an appropriate value of $\lambda$, the estimator $\widehat A^\lambda_{I} $ in \eqref{eq:VARD oracle} based on {\it any} time interval $I$ is a ``uniformly good enough'' estimator, in the sense that it is sufficiently close  to its population counterpart regardless of the choice of $I$ and even if $I$ contains multiple true change points.  For instance, if $I$ contains three change points, then $I$ should not be a member in $\widehat{\mathcal{P}}$.  This is guaranteed by the following arguments.  Provided that $\widehat A^\lambda_{I}$ is close enough to its population counterpart, under the signal-to-noise ration condition in \Cref{assume:AR high dim coefficient}{\bf (c)}, breaking this interval $I$ will return a smaller objective function value in \eqref{eq-wide-p}.

In order to show that $\widehat A^\lambda_{I} $ is a ``good enough'' estimator, we take advantage of the assumed sparsity of its population counterpart $A^*_ {I} $ in \Cref{lemma:prorpulation AR sparsity}.  In particular, we show that $A^*_ {I}$ is the unique solution to the equation
 	\begin{align*} 
		\left(\sum_{t \in I} \mathbb{E}[X_t X_t^{\top}] \right)(A_I^*)^{\top} = \sum_{t \in I} \mathbb{E}[X_t X_t^{\top}](A_t^*)^{\top}.
	\end{align*}
	The above linear system implies that in general $A_I^*\not = |I|^{-1} \sum_{t\in I } A_t^*$, as the covariance matrix $\mathbb{E}[X_t X_t^{\top}]$ changes if $A_t^*$ changes at the change points.  However, we can   quantify the size of  support of $A_I^* $ on any generic interval $I$ as long as $\{A_t^*\}_{t \in I }$ are sparse.  With this at hand, we can further show that $\| \widehat A^\lambda_{I}  - A^*_I\|^2 _2 = O_p(|I|^{-1})$, which is what makes $\widehat A^\lambda_{I} $ a ``good enough'' estimator for the purposes of change point localization. 
  
The key ingredients of the proofs of both Propositions~\ref{proposition:dp step 1} and \ref{prop-2} are two types of deviation inequalities as follows. 
	\begin{itemize}
		\item \textbf{Restricted eigenvalues.} In the literature on high-dimensional regression problems, there are several versions of the restricted eigenvalue conditions \citep[see, e.g.][]{buhlmann2011statistics}.  In our analysis, such conditions amount to controlling the probability of the event
		\begin{align*} 
			\left\{\|B X_{t}\|_2^2 \ge \frac{|I|c_x }{4} \|B \|_2^2 - C_x \log(p)     \|B\|_1^2, \quad \forall B \in \mathbb R^{p\times p}, \, I \subset \{1, \ldots, n\} \right\}.
	 	\end{align*} 		
		\item \textbf{Deviations bounds.}  In addition, we need to control the deviations of the quantities of the form
			\begin{align*}
			 \left\|\sum_{t \in I} \varepsilon_{t + 1} X_{t  }^{\top}\right\|_{\infty}  
			\end{align*}
  		for any interval $I \subset \{1, \ldots, n\}$.
 	\end{itemize}

Using well-established arguments to demonstrate the performance of the \lasso estimator, as detailed in e.g.~Section 6.2 of \cite{buhlmann2011statistics}, the combination of restricted eigenvalues conditions and large probability bounds on the noise lead to oracle inequalities for the estimation and prediction errors in situations where there exists no change point and the data are independent.  	In the existing time series analysis literature, there are several versions of the aforementioned bounds for stationary VAR models \citep[e.g.][]{basu2015regularized}.  We have extended this line of arguments to the present, more challenging settings, to derive analogous oracle inequalities.  See    \Cref{sec:inequalities} for more details. 
 
\subsection*{Sketch of the Proof of \Cref{theorem:lr}}

The proof of \Cref{theorem:lr} is based on an oracle inequality of the group \lasso estimator.  Once it is established that for each interval $(s_k, e_k]$ used in \Cref{algorithm:LR},
	\begin{equation}\label{eq-intuit-g-lasso}
		\sum_{t = s_k}^{e_k} \|\widehat{A}_t - A^*_t\|_2^2 \leq \delta \leq \kappa^2 \Delta,
	\end{equation}
	where $\delta \asymp d_0 \log(n \vee p)$ and that there is one and only one change point in the interval $(s_k, e_k]$ for both the sequence $\{\widehat{A}_t\}$ and $\{A^*_t\}$, then the final claim follows immediately that the refined localization error $\epsilon$ satisfies
	\[
		\epsilon \leq \delta/\kappa^2.
	\]
	The group \lasso penalty is deployed to prompt \eqref{eq-intuit-g-lasso} and the designs of the algorithm guarantee the desirability of each working interval.

\section{Numerical studies} \label{sec-numerical}

In this section, we conduct a numerical study of the performance of the DP approach of \Cref{algorithm:DP}, of the PGL \Cref{algorithm:LR} as well as some competing methods in a variety of different settings to support our theoretical findings.   We will first provide numerical comparisons in the simulated data experiments then in a real data example.  All the implementations of the numerical experiments can be found at  \href{http://www.sharelatex.com}{https://github.com/darenwang/vardp}.

We quantify the performance of the change point estimators  $\{\widehat \eta_k\}_{k=1}^{\widehat K}$ relatively to the set  $\{\eta_k\}_{k=1}^K $ of true change point using the absolute error $|\widehat{K} - K|$ and the scaled Hausdorff distance 
	\[
		\mathcal D(\{\widehat \eta_k\}_{k=1}^{\widehat K} , \{\eta_k\}_{k=1}^K) = \frac{d(\{\widehat \eta_k\}_{k=1}^{\widehat K} , \{\eta_k\}_{k=1}^K)}{n},
	\]
	where $ d(\cdot, \cdot)$ denotes the Hausdorff distance between two compacts sets in $\mathbb{R}$, given by
	\[
		d(A ,B) = \max\left\{   \max_{a  \in A } \min_{b\in B }   |a-b| , \, \max_{b\in B }  \min _{a  \in A }   |a-b|       \right\} .
	\]
	Note that if $\widehat{K}, K \ge 1$, then $\mathcal D(\{\widehat \eta_k\}_{k=1}^{\widehat K} , \{\eta_k\}_{k=1}^K) \le 1$.  For convenience we set $\mathcal D (\emptyset, \{\eta_k\}_{k=1}^K) = 1$. 

As discussed in \Cref{sec-introduction}, algorithms targeting at high-dimensional VAR change point detection are scarce.  In addition, we cannot provide any numerical comparisons with \cite{safikhani2017joint}, because their algorithm  is NP-hard in the worst-case scenario.  To be more precise, their combinatorial algorithm scales exponentially in $\widehat{K}$, which in general may be much bigger than $K$.  In all the numerical experiments, we compare our methods with the SBS-MVTS procedure proposed in \cite{cho2015multiple} and the INSPECT method proposed in \cite{wang2018high}.  The SBS-MVTS is designed to detect covariance changes in VAR processes, and the INSPECT detects mean change points in high-dimensional sub-Gaussian vectors.  We follow the default setup of the tuning parameters for the competitors, specified in the R \citep{R} packages wbs \citep{wbs-R} and InspectChangepoint \citep{inspect-R}, respectively.

For the tuning parameters used in our approaches, throughout this section, let $\lambda = 0.1  \sqrt {\log(p)}  $, $\gamma=15\log(n)p$ and  $ \zeta=0.3  \sqrt {\log(p)} $.  

\subsection{Simulations} \label{sec:numerical}

We generate data according to \Cref{assume:AR change point} in three settings, each consisting of multiple setups, and for each setup we carry out 100 simulations.  These three settings are designed to have varying minimal spacing $\Delta$, minimal jump size $\kappa$ and dimensionality $p$, respectively and are as follows:

\begin{itemize}
\item [(i)] Varying $\Delta$. Let $n  \in  \{ 100, 120, 140, 160 \} $, $p = 10$, $\sigma_{\epsilon} = 1$ and $L = 1$. The only  change point  occurs at $n/2$.  The coefficient matrices  are defined as 
	\begin{align*}
		A^*_t = \begin{cases}   
			\begin{pmatrix}
				0.3 & -0.3  &  &   \\
			 	&  \ddots    & \ddots  &  \\
		      	& &  0.3  & -0.3  \\
		         & &    & 0.3  	
			\end{pmatrix}, & t \in \{1, \ldots, n/2-1\},  \\
			\begin{pmatrix}
				-0.3 & 0.3 &  &   \\
				 &  \ddots    & \ddots  &  \\
			      & &  -0.3  & 0.3  \\
		         & &    &  -0.3   	
			\end{pmatrix}, & t \in \{n/2, \ldots, n\},
		\end{cases} 
	\end{align*}
	where the omitted entries that are zero.
	
\item [(ii)] Varying $\kappa$. Let $n =240 $, $p  =20 $, $\sigma_{\epsilon} = 1$ and $L = 1$.  The change points occur at $n/3$ and $2n/3$.  The coefficient matrices are
	\begin{align}\label{eq:transition matrix in 2}
		A_t^* = \begin{cases}  
			\rho (v, -v, 0_{p-2}), & t \in \{1, \ldots, n/3-1\}, \\
		    \rho (-v, v, 0_{p-2}), & t \in \{n/3, \ldots, 2n/3-1\}, \\
			\rho (v, -v, 0_{p-2}), & t \in \{2n/3, \ldots, n\}, 
		\end{cases}
	\end{align}
	where $v \in \mathbb R^p$ has odd coordinates equal 1 and even coordinates equal to $-1$, $0_{p-2} \in \mathbb{R}^{p \times (p-2)}$ is an all zero matrix and $\rho \in \{0.05,0.1, 0.15, 0.2,0.25 \}$.

\item [(iii)]  Varying $p$.  Let $T = 240$, $p \in \{  15, 20,25,30, 35\}$, $\sigma_{\epsilon} = 1$ and $L = 1$.  The change points occur at $n/3$ and $2n/3$.  The coefficient matrices are
	\begin{align*}
		A_t^* = \begin{cases}
			(v_1, v_2 , v_3, 0_{p-3}), & t \in \{1, \ldots, n/3-1\}, \\
			(v_2, v_3 , v_1, 0_{p-3}), & t \in \{n/3, \ldots, 2n/3-1\}, \\
			(v_3, v_2 , v_1, 0_{p-3}), & t \in \{2n/3, \ldots, n\}, 
		\end{cases}	
	\end{align*}
	where $v_1, v_2, v_3 \in \mathbb{R}^p$ with 
	\[
		v_1 = (-0.15 ,  0.225, 0.25 , -0.15 , 0  ,\ldots,0)^\top, \quad v_2 = (0.2 ,  -0.075,  -0.175 ,  -0.05 ,0 ,\ldots,0 )^\top
	\]
	and 
	\[
		v_3 = (-0.15 ,  0.1, 0.3 ,  -0.05 , 0 ,\ldots,0)^\top.
	\]	
\end{itemize}

\begin{table}[htbp!]
\begin{center}
\begin{tabular}{cccccc}
Setting  & Metric &  PDP & PGL  & SBS-MVTS  & INSPECT  \\
\hline \hline
    (i)  n=100  & $\mathcal D$   &0.000 (0.000)       &    0.000 (0.000)  &0.840 (0.348) &  1.000   (0.000)   \\
 (i) n=120  &  &   0.000 (0.000)      &  0.000 (0.000)   & 0.600 (0.457)  &   0.989 (0.081) \\
  (i) n=140 &  &     0.000 (0.000)    &   0.000 (0.000) & 0.536 (0.470) &   0.988 (0.082)\\
  (i) n=160  &  &   0.000 (0.000)      &    0.000 (0.000)  &   0.367  (0.432)  &   0.995   (0.051) \\
 \hline
  (i)  n=100  & $ |\widehat{K} - K|$   &0.000 (0.000)       &    0.000 (0.000)  &0.820 (0.386) &  1.000   (0.000)   \\
(i) n=120  &     &   0.000 (0.000)      &  0.000 (0.000)   & 0.560 (0.499)  &   0.980 (0.141) \\
  (i) n=140  &  &     0.000 (0.000)    &   0.000 (0.000) & 0.500 (0.502) &   0.990 (0.100)\\
   (i) n=160   &  &   0.000 (0.000)      &    0.000 (0.000)  &   0.310  (0.465)  &   0.990   (0.100) \\
  \hline\hline
   (ii) $\rho =0.05 $  & $\mathcal D$  &    0.049 (0.026)   &  0.041  (0.030)   &     0.990 (0.076) & 1.000 (0.000)     \\
  (ii) $\rho =0.10$ &   &      0.035(0.027)  & 0.022 (0.027)    & 0.994 (0.059)   &  1.000 (0.000)	  \\
  (ii) $\rho =0.15 $ &  &     0.022 (0.024)   &  0.009 (0.017)   &  1.000 (0.000)	&  0.996 (0.043)   \\
  (ii) $\rho =0.20$ & &   0.010  (0.018)     &  0.004 (0.012)   &   0.995 (0.054)&   0.955 (0.147)  \\
  (ii) $\rho =0.25$ &  &   0.004  (0.013)     &  0.002 (0.009)   &   1.000 (0.000)&   0.831 (0.260)  \\
 \hline
   (ii) $\rho =0.05 $   & $ |\widehat{K} -K|$  &    0.000 (0.000)   &   0.000 (0.000)   &     1.980 (0.140) & 2.000 (0.000)     \\
      (ii) $\rho =0.10 $   & &     0.000 (0.000) &   0.000 (0.000)   &     1.990 (0.100) & 2.000 (0.000)     \\
 (ii) $\rho =0.15 $ &  &    0.000 (0.000)  &  0.000 (0.000)  &     2.000 (0.000) & 1.999 (0.100)     \\
 (ii) $\rho =0.20$ &   &    0.000 (0.000)    &  0.000 (0.000)  &   1.990 (0.100)&   1.890 (0.373)  \\
(ii) $\rho =0.25$ &   &   0.000 (0.000)    &  0.000 (0.000)   &   2.000 (0.000)&   1.650 (0.575)  \\
 \hline\hline
    (iii) $p =15 $  & $\mathcal D$  &    0.049 (0.034)   &  0.026 (0.031)   &     1.000 (0.000) & 1.000(0.000)     \\
  (iii) $p=20$ &  &      0.059 (0.031)  & 0.036 (0.032)    & 1.000 (0.000)   &  1.000 (0.000)	  \\
  (iii) $p=25$ &   &     0.057 (0.028)   &  0.034 (0.029)   &  1.000 (0.000)	&  0.992(0.073)   \\
  (iii) $p= 30$ &  &   0.058 (0.028)     &  0.027 (0.030)   &   1.000 (0.000)&    1.000  (0.000)  \\
 (iii) $p= 35$ &  &   0.043 (0.024)     &  0.026 (0.031)   &   0.986 (0.097)&   0.987 (0.092)  \\
\hline 
 (iii) $p=15 $   & $ |\widehat{K} -K|$    &    0.000 (0.000)   &   0.000 (0.000)   &     2.000 (0.000) & 2.000 (0.000)     \\
 (iii) $p=20 $   & &     0.000 (0.000) &   0.000 (0.000)   &     2.000 (0.000) & 2.000 (0.000)     \\
  (iii) $p=25$ &   &    0.000 (0.000)  &  0.000 (0.000)  &     2.000 (0.000) & 1.990 (0.100)     \\
 (iii) $p= 30$ &  &    0.000 (0.000)    &  0.000 (0.000)  &   2.000 (0.000)&   2.000 (0.000)  \\
 (iii) $p= 35$ &   &    0.000 (0.000)    &  0.000 (0.000)  &   1.980 (0.141)&   1.980 (0.141)  
\end{tabular}
\end{center}
\caption{Simulation results.  Each cell is based on 100 repetitions and is in the form of mean(standard error). \label{tab-sim}}
\end{table}

\begin{figure}
\includegraphics[width = \textwidth]{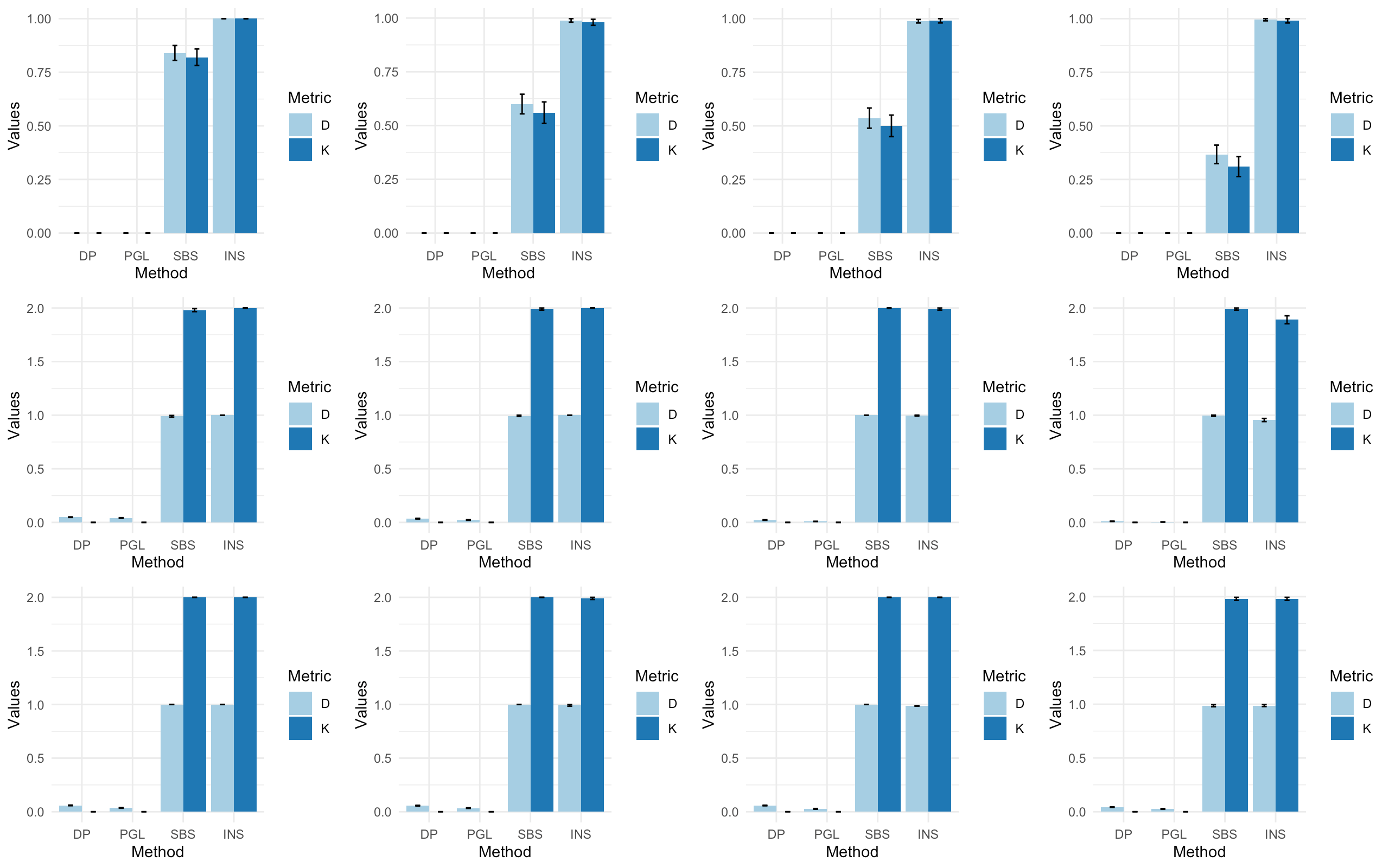}	
\caption{Bar plots visualizing the results collected in \Cref{tab-sim}.  From left to right and from top to bottom are Setting (i) with $n \in \{100, 120, 140, 160\}$, Setting (ii) with $\rho \in \{0.05, 0.1, 0.15, 0.2\}$ and Setting (iii) with $p \in \{20, 25, 30, 35\}$, respectively.  The four methods are: DP, \Cref{algorithm:DP}; PGL, \Cref{algorithm:LR}, SBS, SBS-MVTS; INS, Inspect.  The two metrics are: D, $\mathcal{D}$; K, $ |\widehat{K} -K|$. \label{fig-simulations}}
\end{figure}

The numerical comparisons of all algorithms in terms of the absolute error $|\widehat{K} - K|$ and the scaled Hausdorff distance $\mathcal D$ are reported in  \Cref{tab-sim} and are also displayed in \Cref{fig-simulations}.  We can see that even though the dimension $p$ is moderate, neither  SBS-MVTS  nor  INSPECT can consistently estimate the change points. In fact, both algorithms tend to infer that there is no change points in the time series, confirming the intuition discussed in \Cref{eq:why transition matrix}.

In all of our approaches, we need to estimate the coefficient matrices, therefore  the effective dimensions are $p^2$, but not $p$.  The competitors SBS-MVTS and INSPECT, however, estimate the change points without estimating the coefficient matrices.  This reduces the effective dimension to $p$, which might be more efficient in terms of computation, but neither SBS-MVTS nor INSPECT can consistently estimate the change points in these settings.

\subsection{A real data example: S\&P 100}

We study the daily closing prices of the constituents of S\&P 100 index from Jan 2016 to Oct 2018.  The companies in the S\&P 100 are selected for sector balance and represent about 51\% of the market capitalization of the U.S.~equity market.  They tend to be the largest and most established firms in   the U.S.~stock market.  Since the stock prices always exhibit  upward trends, we therefore follow the  standard procedure and de-trend the data by taking the first order difference.  After removing missing values, our final dataset is  a multivariate time series with $n= 700$ and $p = 93$, where each dimension corresponds the daily stock price change of one firm during the aforementioned period. 
   
In order to apply our algorithms, we rescale the data so that the average variance over all dimensions equals 1.  We apply the DP and PGL algorithms to the data set and depict the detected change points in \Cref{fig:sp100change1}. Since  the outputs of these two algorithms do not differ by much, we focus on the ones output by DP.  The five estimated  change points by DP  are 3rd November 2016, 23rd March 2017, 15th August 2017, 29th December 2017 and 10th May 2018.  There are some times, e.g. around Feb. 2018, where we see big changes in the S\&P 100  index but none of the methods describes it as a change point. This is because our change point analysis is targeting changes in the interactions among companies, as captured by a VAR process, and not directly the mean price changes. Thus occasional large fluctuations in the data do not necessarily reflect the type of structural changes we seek to identify.

To argue that our methodology has lead to meaningful findings, we suggest a justification of each estimated change points.  Four days after the first change point estimator, Trump won the presidential election.  Eight days after the second change point estimator, President Trump signed two executive orders increasing tariffs.  This is a key date in the U.S.-China trade war.  The third change point estimator is associated with  President Trump signing another executive order in March 2017, authorizing U.S.~Trade Representatives to begin investigations into Chinese trade practices, with particular focus on intellectual property and advanced technology.  The fourth change point estimator lines up with the best opening of the  U.S.~stock market in 31 years \citep[e.g.][]{CNBC}.  The last change point estimator is associated with the White House announcement  that it would impose a 25\% tariff on \$50 billion of Chinese goods with industrially significant technology at the end of May 2018.  For comparison, we also show the change points found by SBS-MVTS and INSPECT  in \Cref{fig:sp100change2}. 
 
\begin{figure}[H]
\begin{center}
\subfloat[]{\includegraphics[width= 0.8\linewidth]{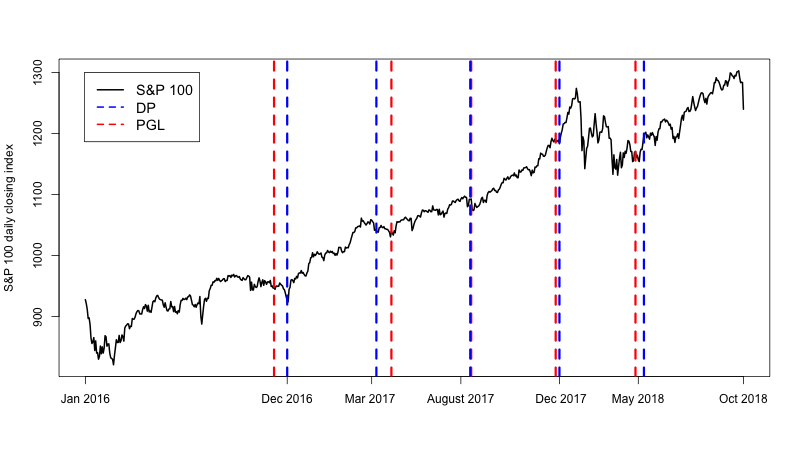}\label{fig:sp100change1}}~
\\
\subfloat[]{\includegraphics[width=0.8\linewidth]{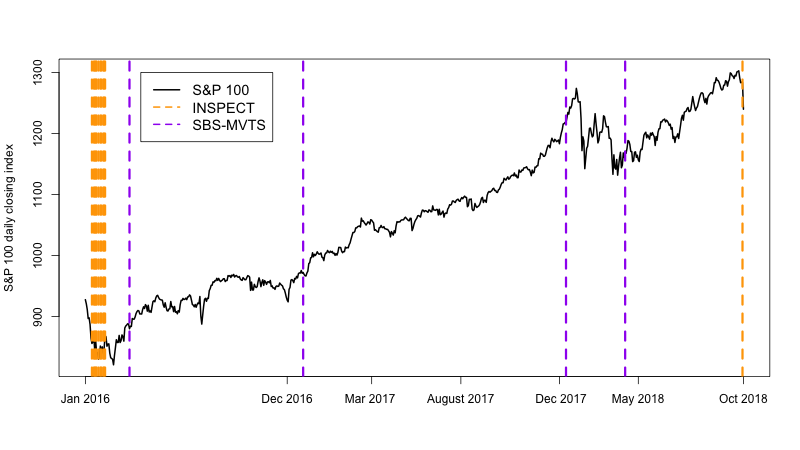}\label{fig:sp100change2}}
	\caption{S\&P 100 example. (a) Change points  estimated  by the DP and PGL algorithms. (b) Change points  estimated  by SBS-MVTS and INSPECT algorithms.  While the plot is only showing the daily closing index, our method is using the first order difference  of $p=93$ stocks that comprise the index. All the change point algorithms analyzed in this section focus on   the interactions among those 93 companies, {\it not} in   changes in the S\&P 100 stock index. }
	
	\end{center}
\end{figure}

\section{Conclusions}

This paper considers change point localization in high-dimensional vector autoregressive process models.  We have developed two procedures for change point localization that can be characterized as solutions to a common optimization framework and that can be efficiently implemented using a combination of dynamic programming and \lasso-type estimators.  We have demonstrated that our methods yield the sharpest localization rates of autoregressive processes and match the best known rates for change point localization. We further conjecture that the localization rate of the PGL algorithm is minimax optimal.  Both  minimax rates and extensions of this framework beyond sparse models to other models of low-dimensional structure remain important open questions for future research.

\bibliographystyle{ims}
\bibliography{citations}

\begin{thebibliography}{55}
\expandafter\ifx\csname natexlab\endcsname\relax\def\natexlab#1{#1}\fi
\expandafter\ifx\csname url\endcsname\relax
  \def\url#1{\texttt{#1}}\fi
\expandafter\ifx\csname urlprefix\endcsname\relax\def\urlprefix{URL }\fi
\providecommand{\eprint}[2][]{\url{#2}}

\bibitem[{Aue et~al.(2009)Aue, H{\"o}rmann, Horv{\'a}th and
  Reimherr}]{aue2009break}
\textsc{Aue, A.}, \textsc{H{\"o}rmann, S.}, \textsc{Horv{\'a}th, L.} and
  \textsc{Reimherr, M.} (2009).
\newblock Break detection in the covariance structure of multivariate time
  series models.
\newblock \textit{The Annals of Statistics}, \textbf{37} 4046--4087.

\bibitem[{Bai and Perron(1998)}]{bai1998estimating}
\textsc{Bai, J.} and \textsc{Perron, P.} (1998).
\newblock Estimating and testing linear models with multiple structural
  changes.
\newblock \textit{Econometrica} 47--78.

\bibitem[{Baranowski and Fryzlewicz(2019)}]{wbs-R}
\textsc{Baranowski, R.} and \textsc{Fryzlewicz, P.} (2019).
\newblock wbs: Wild binary segmentation for multiple change-point detection.
\newblock R package version 1.4,
  \urlprefix\url{https://cran.r-project.org/web/packages/wbs/index.html}.

\bibitem[{Basu and Michailidis(2015)}]{basu2015regularized}
\textsc{Basu, S.} and \textsc{Michailidis, G.} (2015).
\newblock Regularized estimation in sparse high-dimensional time series models.
\newblock \textit{The Annals of Statistics}, \textbf{43} 1535--1567.

\bibitem[{Bickel and Gel(2011)}]{bickel2011banded}
\textsc{Bickel, P.~J.} and \textsc{Gel, Y.~R.} (2011).
\newblock Banded regularization of autocovariance matrices in application to
  parameter estimation and forecasting of time series.
\newblock \textit{Journal of the Royal Statistical Society: Series B
  (Statistical Methodology)}, \textbf{73} 711--728.

\bibitem[{Bolstad et~al.(2011)Bolstad, Van~Veen and Nowak}]{bolstad2011causal}
\textsc{Bolstad, A.}, \textsc{Van~Veen, B.~D.} and \textsc{Nowak, R.} (2011).
\newblock Causal network inference via group sparse regularization.
\newblock \textit{IEEE transactions on signal processing}, \textbf{59}
  2628--2641.

\bibitem[{B{\"u}hlmann and van~de Geer(2011)}]{buhlmann2011statistics}
\textsc{B{\"u}hlmann, P.} and \textsc{van~de Geer, S.} (2011).
\newblock \textit{Statistics for high-dimensional data: methods, theory and
  applications}.
\newblock Springer Science \& Business Media.

\bibitem[{Chan and Walther(2013)}]{chan2013}
\textsc{Chan, H.~P.} and \textsc{Walther, G.} (2013).
\newblock Detection with the scan and the average likelihood ratio.
\newblock \textit{Statistica Sinica}, \textbf{1} 409--428.

\bibitem[{Chang et~al.(2015)Chang, Guo and Yao}]{chang2015high}
\textsc{Chang, J.}, \textsc{Guo, B.} and \textsc{Yao, Q.} (2015).
\newblock High dimensional stochastic regression with latent factors,
  endogeneity and nonlinearity.
\newblock \textit{Journal of econometrics}, \textbf{189} 297--312.

\bibitem[{Chang et~al.(2018)Chang, Guo and Yao}]{chang2018principal}
\textsc{Chang, J.}, \textsc{Guo, B.} and \textsc{Yao, Q.} (2018).
\newblock Principal component analysis for second-order stationary vector time
  series.
\newblock \textit{The Annals of Statistics}, \textbf{46} 2094--2124.

\bibitem[{Chang et~al.(2017)Chang, Yao and Zhou}]{chang2017testing}
\textsc{Chang, J.}, \textsc{Yao, Q.} and \textsc{Zhou, W.} (2017).
\newblock Testing for high-dimensional white noise using maximum
  cross-correlations.
\newblock \textit{Biometrika}, \textbf{104} 111--127.

\bibitem[{Chen and Gupta(1997)}]{chen1997testing}
\textsc{Chen, J.} and \textsc{Gupta, A.~K.} (1997).
\newblock Testing and locating variance changepoints with application to stock
  prices.
\newblock \textit{Journal of the American Statistical association}, \textbf{92}
  739--747.

\bibitem[{Chen et~al.(2013)Chen, Xu and Wu}]{chen2013covariance}
\textsc{Chen, X.}, \textsc{Xu, M.} and \textsc{Wu, W.~B.} (2013).
\newblock Covariance and precision matrix estimation for high-dimensional time
  series.
\newblock \textit{The Annals of Statistics}, \textbf{41} 2994--3021.

\bibitem[{Cho(2016)}]{cho2016change}
\textsc{Cho, H.} (2016).
\newblock Change-point detection in panel data via double cusum statistic.
\newblock \textit{Electronic Journal of Statistics}, \textbf{10} 2000--2038.

\bibitem[{Cho and Fryzlewicz(2015)}]{cho2015multiple}
\textsc{Cho, H.} and \textsc{Fryzlewicz, P.} (2015).
\newblock Multiple-change-point detection for high dimensional time series via
  sparsified binary segmentation.
\newblock \textit{Journal of the Royal Statistical Society: Series B
  (Statistical Methodology)}, \textbf{77} 475--507.

\bibitem[{De~Mol et~al.(2008)De~Mol, Giannone and Reichlin}]{de2008forecasting}
\textsc{De~Mol, C.}, \textsc{Giannone, D.} and \textsc{Reichlin, L.} (2008).
\newblock Forecasting using a large number of predictors: Is bayesian shrinkage
  a valid alternative to principal components?
\newblock \textit{Journal of Econometrics}, \textbf{146} 318--328.

\bibitem[{Dette and G{\"o}smann(2018)}]{dette2018relevant}
\textsc{Dette, H.} and \textsc{G{\"o}smann, J.} (2018).
\newblock Relevant change points in high dimensional time series.
\newblock \textit{Electronic Journal of Statistics}, \textbf{12} 2578--2636.

\bibitem[{Fiecas et~al.(2018)Fiecas, Leng, Liu and Yu}]{fiecas2018spectral}
\textsc{Fiecas, M.}, \textsc{Leng, C.}, \textsc{Liu, W.} and \textsc{Yu, Y.}
  (2018).
\newblock Spectral analysis of high-dimensional time series.
\newblock \textit{arXiv preprint arXiv:1810.11223}.

\bibitem[{Forni et~al.(2005)Forni, Hallin, Lippi and
  Reichlin}]{forni2005generalized}
\textsc{Forni, M.}, \textsc{Hallin, M.}, \textsc{Lippi, M.} and
  \textsc{Reichlin, L.} (2005).
\newblock The generalized dynamic factor model: one-sided estimation and
  forecasting.
\newblock \textit{Journal of the American Statistical Association},
  \textbf{100} 830--840.

\bibitem[{Franck(2018)}]{CNBC}
\textsc{Franck, T.} (2018).
\newblock The stock market is off to its best start in 31 years and that bodes
  well for the rest of 2018.
\newblock \textit{The Consumer News and Business Channel}.
\newblock
  \urlprefix\url{https://www.cnbc.com/2018/01/24/stock-market-off-to-best-start-in-31-years-bodes-well-for-2018.html}.

\bibitem[{Frick et~al.(2014)Frick, Munk and Sieling}]{FrickEtal2014}
\textsc{Frick, K.}, \textsc{Munk, A.} and \textsc{Sieling, H.} (2014).
\newblock Multiscale change point inference.
\newblock \textit{Journal of the Royal Statistical Society: Series B
  (Statistical Methodology)}, \textbf{76} 495--580.

\bibitem[{Friedrich et~al.(2008)Friedrich, Kempe, Liebscher and
  Winkler}]{FriedrichEtal2008}
\textsc{Friedrich, F.}, \textsc{Kempe, A.}, \textsc{Liebscher, V.} and
  \textsc{Winkler, G.} (2008).
\newblock Complexity penalized m-estimation: Fast computation.
\newblock \textit{Journal of Computational and Graphical Statistics},
  \textbf{17} 201--204.

\bibitem[{Fryzlewicz(2014)}]{fryzlewicz2014wild}
\textsc{Fryzlewicz, P.} (2014).
\newblock Wild binary segmentation for multiple change-point detection.
\newblock \textit{The Annals of Statistics}, \textbf{42} 2243--2281.

\bibitem[{Guo et~al.(2016)Guo, Wang and Yao}]{guo2016high}
\textsc{Guo, S.}, \textsc{Wang, Y.} and \textsc{Yao, Q.} (2016).
\newblock High-dimensional and banded vector autoregressions.
\newblock \textit{Biometrika} asw046.

\bibitem[{Han et~al.(2015)Han, Lu and Liu}]{han2015direct}
\textsc{Han, F.}, \textsc{Lu, H.} and \textsc{Liu, H.} (2015).
\newblock A direct estimation of high dimensional stationary vector
  autoregressions.
\newblock \textit{The Journal of Machine Learning Research}, \textbf{16}
  3115--3150.

\bibitem[{Haufe et~al.(2010)Haufe, M{\"u}ller, Nolte and
  Kr{\"a}mer}]{haufe2010sparse}
\textsc{Haufe, S.}, \textsc{M{\"u}ller, K.-R.}, \textsc{Nolte, G.} and
  \textsc{Kr{\"a}mer, N.} (2010).
\newblock Sparse causal discovery in multivariate time series.
\newblock In \textit{Causality: Objectives and Assessment}. 97--106.

\bibitem[{Hsu et~al.(2008)Hsu, Hung and Chang}]{hsu2008subset}
\textsc{Hsu, N.-J.}, \textsc{Hung, H.-L.} and \textsc{Chang, Y.-M.} (2008).
\newblock Subset selection for vector autoregressive processes using lasso.
\newblock \textit{Computational Statistics \& Data Analysis}, \textbf{52}
  3645--3657.

\bibitem[{Killick et~al.(2012)Killick, Fearnhead and Eckley}]{KillickEtal2012}
\textsc{Killick, R.}, \textsc{Fearnhead, P.} and \textsc{Eckley, I.~A.} (2012).
\newblock Optimal detection of changepoints with a linear computational cost.
\newblock \textit{Journal of the American Statistical Association},
  \textbf{107} 1590--1598.

\bibitem[{Lam and Yao(2012)}]{lam2012factor}
\textsc{Lam, C.} and \textsc{Yao, Q.} (2012).
\newblock Factor modeling for high-dimensional time series: inference for the
  number of factors.
\newblock \textit{The Annals of Statistics}, \textbf{40} 694--726.

\bibitem[{Leonardi and B{\"u}hlmann(2016)}]{leonardi2016computationally}
\textsc{Leonardi, F.} and \textsc{B{\"u}hlmann, P.} (2016).
\newblock Computationally efficient change point detection for high-dimensional
  regression.
\newblock \textit{arXiv preprint arXiv:1601.03704}.

\bibitem[{Lin et~al.(2017)Lin, Sharpnack, Rinaldo and
  Tibshirani}]{lin2017sharp}
\textsc{Lin, K.}, \textsc{Sharpnack, J.~L.}, \textsc{Rinaldo, A.} and
  \textsc{Tibshirani, R.~J.} (2017).
\newblock A sharp error analysis for the fused lasso, with application to
  approximate changepoint screening.
\newblock In \textit{Advances in Neural Information Processing Systems}.
  6884--6893.

\bibitem[{Loh and Wainwright(2011)}]{loh2011high}
\textsc{Loh, P.-L.} and \textsc{Wainwright, M.~J.} (2011).
\newblock High-dimensional regression with noisy and missing data: Provable
  guarantees with non-convexity.
\newblock In \textit{Advances in Neural Information Processing Systems}.
  2726--2734.

\bibitem[{L{\"u}tkepohl(2005)}]{lutkepohl2005new}
\textsc{L{\"u}tkepohl, H.} (2005).
\newblock \textit{New introduction to multiple time series analysis}.
\newblock Springer Science \& Business Media.

\bibitem[{Maidstone et~al.(2017)Maidstone, Hocking, Rigaill and
  Fearnhead}]{MaidstoneEtal2017}
\textsc{Maidstone, R.}, \textsc{Hocking, T.}, \textsc{Rigaill, G.} and
  \textsc{Fearnhead, P.} (2017).
\newblock On optimal multiple changepoint algorithms for large data.
\newblock \textit{Statistics and Computing}, \textbf{27} 519--533.

\bibitem[{Michailidis and d'Alch\'{e}
  Buc(2013)}]{michailidis2013autoregressive}
\textsc{Michailidis, G.} and \textsc{d'Alch\'{e} Buc, F.} (2013).
\newblock Autoregressive models for gene regulatory network inference:
  Sparsity, stability and causality issues.
\newblock \textit{Mathematical biosciences}, \textbf{246} 326--334.

\bibitem[{{R Core Team}(2017)}]{R}
\textsc{{R Core Team}} (2017).
\newblock R: A language and environment for statistical computing.
\newblock \urlprefix\url{https://www.R-project.org/}.

\bibitem[{Rigaill(2010)}]{Rigaill2010}
\textsc{Rigaill, G.} (2010).
\newblock Pruned dynamic programming for optimal multiple change-point
  detection.
\newblock \textit{arXiv preprint arXiv:1004.0887}.

\bibitem[{Safikhani and Shojaie(2020)}]{safikhani2017joint}
\textsc{Safikhani, A.} and \textsc{Shojaie, A.} (2020).
\newblock Joint structural break detection and parameter estimation in
  high-dimensional non-stationary var models.
\newblock \textit{Journal of the American Statistical Association} 1--26.

\bibitem[{Schneider-Luftman and Walden(2016)}]{schneider2016partial}
\textsc{Schneider-Luftman, D.} and \textsc{Walden, A.~T.} (2016).
\newblock Partial coherence estimation via spectral matrix shrinkage under
  quadratic loss.
\newblock \textit{IEEE Transactions on Signal Processing}, \textbf{64}
  5767--5777.

\bibitem[{Shojaie and Michailidis(2010)}]{shojaie2010discovering}
\textsc{Shojaie, A.} and \textsc{Michailidis, G.} (2010).
\newblock Discovering graphical granger causality using the truncating lasso
  penalty.
\newblock \textit{Bioinformatics}, \textbf{26} i517--i523.

\bibitem[{Smith(2012)}]{smith2012future}
\textsc{Smith, S.~M.} (2012).
\newblock The future of {F}{M}{R}{I} connectivity.
\newblock \textit{Neuroimage}, \textbf{62} 1257--1266.

\bibitem[{Susto et~al.(2014)Susto, Schirru, Pampuri, McLoone and
  Beghi}]{susto2014machine}
\textsc{Susto, G.~A.}, \textsc{Schirru, A.}, \textsc{Pampuri, S.},
  \textsc{McLoone, S.} and \textsc{Beghi, A.} (2014).
\newblock Machine learning for predictive maintenance: A multiple classifier
  approach.
\newblock \textit{IEEE Transactions on Industrial Informatics}, \textbf{11}
  812--820.

\bibitem[{Swanson(2001)}]{swanson2001general}
\textsc{Swanson, D.~C.} (2001).
\newblock A general prognostic tracking algorithm for predictive maintenance.
\newblock In \textit{2001 IEEE Aerospace Conference Proceedings (Cat. No.
  01TH8542)}, vol.~6. IEEE, 2971--2977.

\bibitem[{Tank et~al.(2015)Tank, Foti and Fox}]{tank2015bayesian}
\textsc{Tank, A.}, \textsc{Foti, N.} and \textsc{Fox, E.} (2015).
\newblock Bayesian structure learning for stationary time series.
\newblock \textit{arXiv preprint arXiv:1505.03131}.

\bibitem[{Tu et~al.(2017)Tu, Yao and Zhang}]{tu2018errorcorrection}
\textsc{Tu, Y.}, \textsc{Yao, Q.} and \textsc{Zhang, R.} (2017).
\newblock Error-correction factor models for high-dimensional cointegrated time
  series.

\bibitem[{Wang et~al.(2019)Wang, Lin and Willett}]{wang2019statistically}
\textsc{Wang, D.}, \textsc{Lin, K.} and \textsc{Willett, R.} (2019).
\newblock Statistically and computationally efficient change point localization
  in regression settings.
\newblock \textit{arXiv preprint arXiv:1906.11364}.

\bibitem[{Wang et~al.(2017)Wang, Yu and Rinaldo}]{wang2017optimal}
\textsc{Wang, D.}, \textsc{Yu, Y.} and \textsc{Rinaldo, A.} (2017).
\newblock Optimal covariance change point localization in high dimension.
\newblock \textit{arXiv preprint arXiv:1712.09912}.

\bibitem[{Wang et~al.(2018{\natexlab{a}})Wang, Yu and
  Rinaldo}]{wang2018optimal}
\textsc{Wang, D.}, \textsc{Yu, Y.} and \textsc{Rinaldo, A.}
  (2018{\natexlab{a}}).
\newblock Optimal change point detection and localization in sparse dynamic
  networks.
\newblock \textit{arXiv preprint arXiv:1809.09602}.

\bibitem[{Wang et~al.(2018{\natexlab{b}})Wang, Yu and
  Rinaldo}]{wang2018univariate}
\textsc{Wang, D.}, \textsc{Yu, Y.} and \textsc{Rinaldo, A.}
  (2018{\natexlab{b}}).
\newblock Univariate mean change point detection: Penalization, cusum and
  optimality.
\newblock \textit{arXiv preprint arXiv:1810.09498}.

\bibitem[{Wang and Samworth(2020)}]{inspect-R}
\textsc{Wang, T.} and \textsc{Samworth, R.} (2020).
\newblock \textit{InspectChangepoint: High-Dimensional Changepoint Estimation
  via Sparse Projection}.
\newblock R package version 1.1,
  \urlprefix\url{https://cran.r-project.org/web/packages/InspectChangepoint/index.html}.

\bibitem[{Wang and Samworth(2018)}]{wang2018high}
\textsc{Wang, T.} and \textsc{Samworth, R.~J.} (2018).
\newblock High dimensional change point estimation via sparse projection.
\newblock \textit{Journal of the Royal Statistical Society: Series B
  (Statistical Methodology)}, \textbf{80} 57--83.

\bibitem[{Wu and Wu(2016)}]{wu2016performance}
\textsc{Wu, W.-B.} and \textsc{Wu, Y.~N.} (2016).
\newblock Performance bounds for parameter estimates of high-dimensional linear
  models with correlated errors.
\newblock \textit{Electronic Journal of Statistics}, \textbf{10} 352--379.

\bibitem[{Xiao and Wu(2012)}]{xiao2012covariance}
\textsc{Xiao, H.} and \textsc{Wu, W.~B.} (2012).
\newblock Covariance matrix estimation for stationary time series.
\newblock \textit{The Annals of Statistics}, \textbf{40} 466--493.

\bibitem[{Yam et~al.(2001)Yam, Tse, Li and Tu}]{yam2001intelligent}
\textsc{Yam, R.}, \textsc{Tse, P.}, \textsc{Li, L.} and \textsc{Tu, P.} (2001).
\newblock Intelligent predictive decision support system for condition-based
  maintenance.
\newblock \textit{The International Journal of Advanced Manufacturing
  Technology}, \textbf{17} 383--391.

\bibitem[{Zhang et~al.(2019)Zhang, Robinson and Yao}]{zhang2019identifying}
\textsc{Zhang, R.}, \textsc{Robinson, P.} and \textsc{Yao, Q.} (2019).
\newblock Identifying cointegration by eigenanalysis.
\newblock \textit{Journal of the American Statistical Association},
  \textbf{114} 916--927.

\end{thebibliography}

\appendix 
\section*{Appendix}
In the proofs, we do not track every single absolute constant.  Except those which have specific meanings, used in the main text, all the others share a few pieces of notation, i.e.~the constant $C$ are used in different places having different values.
	 
\section{Proof of \Cref{thm-var-d}}	\label{appendix:var 1} 
%!TEX root = ./vardp.tex

As we mentioned in \Cref{sec-rm}, for any positive integer $L$, a VAR($L$) process can be written as a VAR(1) process.  Based on the transformation introduced in \Cref{sec-trans}, in this section, we will first lay down the preparations, then prove Propositions~\ref{proposition:dp step 1} and \ref{prop-2} based on $L = 1$.  All the technical lemmas are deferred to \Cref{sec:inequalities}.

We let $A_I^*$ be the solution to
	\begin{align} \label{eq:ar population 1}
		\left(\sum_{t \in I} \mathbb{E}(X_t X_t^{\top}) \right)(A_I^*)^{\top} = \sum_{t \in I} \mathbb{E}(X_t X_t^{\top})(A_t^*)^{\top}.
	\end{align}
	Note that when $I$ contains no change points $A_I^* =A_t^*$, $t \in I$.  Let
	\[
		S_1 = \{i: \, (i,j) \in S\} \subset \{1, \ldots, p\} \quad \mbox{and} \quad S_2 =  \{j:\, (i,j) \in S\} \subset \{1, \ldots, p\}. 
	\]
	Therefore by \Cref{assume:AR high dim coefficient}(\textbf{a}), $\max\{|S_1|, \, |S_2|\} \le d_0$.  With a permutation if necessary, without loss of generality, we have that $S_1\cup S_2 \subset  \{1, \ldots, 2d_0\}$, which implies that each $A_t^*$ has the block structure 
	\begin{equation} \label{eq-astar-prep}
		A_t^* = \begin{pmatrix}
			\mathfrak{a}_{t}^* &   0 \\
			0 & 0 \\
		\end{pmatrix}    \in \mathbb{R}^{p \times p},
	\end{equation}
	where $\mathfrak{a}_{t}^*  \in \mathbb R^{2d_0\times 2d_0}$.  Denote 
	\begin{align}\label{eq:effective support}
		\mathcal S = (S_1 \cup S_2)^{\times 2} \subset  \{1, \ldots, 2d_0\}^{\times 2}
	\end{align}  
	satisfying that $|\mathcal S| \le 4 d_0^2$ and that $A^*_t (i,j) =0 $ if $(i,j ) \in \mathcal S^c$. 

\subsection{Proof of \Cref{proposition:dp step 1}}

\begin{proof}[Proof of \Cref{proposition:dp step 1}]

The four cases in \Cref{proposition:dp step 1} are straightforward consequences of applying union bound arguments to Lemmas~\ref{lemma:one change point}, \ref{lemma:two change point}, \ref{lemma:no change point} and \ref{lemma:three change point}, respectively.  

For illustration, we show how to apply the union bound argument to \Cref{lemma:one change point} and prove Case (i).  Consider the collection of integer intervals 
	\[
		\mathcal I = \{ I \subset \{1, \ldots, n\}: \, I \text{ satisfies all the conditions in \Cref{lemma:one change point} and } |I| \ge \gamma \}.
	\]
	With the notation in \Cref{lemma:one change point}, let $\mathcal E_I$ be 	
	\begin{align*} 
		\mathcal E_I = \Bigg\{\min\{|I_1 |,\, |I_2|\} \leq C_{\epsilon} \left(\frac{\lambda^2 d_0 ^2 + \gamma}{\kappa^2}\right)\Bigg\}.
	\end{align*}
	By \Cref{lemma:one change point} and the fact that $|\mathcal I| \le n^2$, it holds that 
	\[
		\mathbb P \left (\bigcup_{I \in \mathcal I}    \mathcal E  _I     \right)  \le (n\vee p)^{-3}.
	\]
	The proof is completed by noticing that \eqref{eq:one change point} holds since $\widehat{\mathcal{P}}$ is a minimizer of \eqref{eq-wide-p} and that all $I \in \widehat{\mathcal{P}}$ satisfies that $|I| \geq \gamma$, which follows from \Cref{coro:RSS AR lasso} and \eqref{eq-tuning-para-thm1}.	
	
\end{proof}

\begin{lemma}[Case (i)]\label{lemma:one change point} 
With all the conditions and notation in \Cref{proposition:dp step 1}, assume that $I = (s, e]  $ has one and only one true change point $\eta$.  Denote $I_1 = (s, \eta]$, $I_2 = (\eta, e]$ and $\|A_{I_1}^* - A_{I_2}^*\|_2 = \kappa$.  If, in addition, it holds that  
	\begin{align}\label{eq:one change point} 
		 \mathcal L ( I ) \leq   \LL (I_1 )  + \LL (I_2)  +  \gamma, 
	\end{align}
	then with probability at least  $	1 -   (n \vee p)^{-5}$, it holds that with $C_{\epsilon} > 1$, 
	\begin{equation}\label{eq-case-1-lemma-result}
		\min\{|I_1 |,\, |I_2|\} \leq C_{\epsilon} \left(\frac{\lambda^2 d_0 ^2 + \gamma}{\kappa^2} \right).
	\end{equation}
\end{lemma}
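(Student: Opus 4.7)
The plan is to show a matching \emph{lower} bound of the form $\mathcal{L}(I) - \mathcal{L}(I_1) - \mathcal{L}(I_2) \gtrsim \kappa^2 \min\{|I_1|,|I_2|\}$, modulo an additive slack of order $\lambda^2 d_0^2$, and then combine it with the hypothesis $\mathcal{L}(I) - \mathcal{L}(I_1) - \mathcal{L}(I_2) \leq \gamma$ to solve for $\min\{|I_1|,|I_2|\}$. After the VAR($L$)-to-VAR(1) reduction, I can assume $L=1$ and use the block/support structure \eqref{eq-astar-prep}--\eqref{eq:effective support} so that every population matrix in sight lives on a support of size $\leq 4d_0^2$.

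The first step is an algebraic expansion. For each $t\in I_j$, $j\in\{1,2\}$, I would write
\[
\|X_t - \widehat A_I X_{t-1}\|^2 - \|X_t - \widehat A_{I_j} X_{t-1}\|^2 = \|(\widehat A_{I_j}-\widehat A_I)X_{t-1}\|^2 + 2\langle X_t - \widehat A_{I_j}X_{t-1},\,(\widehat A_{I_j}-\widehat A_I)X_{t-1}\rangle,
\]
so that $\mathcal{L}(I)-\mathcal{L}(I_1)-\mathcal{L}(I_2)$ splits into a quadratic piece plus a cross piece on each $I_j$. For the quadratic piece I would invoke the restricted-eigenvalue inequality advertised in \Cref{sec:inequalities} to get
\[
\sum_{t\in I_j}\|(\widehat A_{I_j}-\widehat A_I)X_{t-1}\|^2 \ \geq\ \tfrac{c_x}{4}|I_j|\,\|\widehat A_{I_j}-\widehat A_I\|_2^2 - C_x\log(p)\,\|\widehat A_{I_j}-\widehat A_I\|_1^2,
\]
and then replace $\widehat A_{I_j}$ by $A^*_{I_j}=A^*_t$ (the constant population value on the pure segment $I_j$) by applying the \lasso oracle inequality on $I_j$, which yields $\|\widehat A_{I_j}-A^*_{I_j}\|_2 \lesssim \lambda\sqrt{d_0}$ and $\|\widehat A_{I_j}-A^*_{I_j}\|_1\lesssim \lambda d_0$. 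Setting $\delta_j:=\|A^*_{I_j}-\widehat A_I\|_2$, the triangle inequality gives $\delta_1+\delta_2 \geq \|A^*_{I_1}-A^*_{I_2}\|_2 = \kappa$, and the elementary constrained minimization
\[
\min_{\delta_1+\delta_2\geq\kappa}\bigl(|I_1|\delta_1^2+|I_2|\delta_2^2\bigr)\ =\ \frac{|I_1||I_2|}{|I_1|+|I_2|}\,\kappa^2 \ \geq\ \tfrac{1}{2}\min\{|I_1|,|I_2|\}\,\kappa^2
\]
converts the separation $\kappa$ into the claimed lower bound on the quadratic piece.

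For the cross piece I would decompose $X_t-\widehat A_{I_j}X_{t-1} = \varepsilon_t + (A^*_t-\widehat A_{I_j})X_{t-1}$. The $\varepsilon_t$-contribution is handled by the deviation bound $\|\sum_{t\in I}\varepsilon_t X_{t-1}^\top\|_\infty \lesssim \sqrt{|I|\log(n\vee p)}$ (also from \Cref{sec:inequalities}) together with $\ell_1$-$\ell_\infty$ duality, producing an error $\lesssim \lambda\, d_0 \,\|\widehat A_{I_j}-\widehat A_I\|_1 \lesssim \lambda^2 d_0^2$; the bias contribution $(A^*_t-\widehat A_{I_j})X_{t-1}$ is of the same order via another application of the oracle inequality. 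Assembling everything gives
\[
\mathcal{L}(I)-\mathcal{L}(I_1)-\mathcal{L}(I_2)\ \geq\ c_1\kappa^2\min\{|I_1|,|I_2|\} - c_2(\lambda^2 d_0^2),
\]
which combined with the hypothesis $\leq \gamma$ delivers \eqref{eq-case-1-lemma-result}. The main obstacle will be controlling the $\ell_1$ slack in the restricted-eigenvalue step on the mixed interval $I$: the \lasso estimator $\widehat A_I$ is fitted on data whose generating mechanism changes at $\eta$, so the usual oracle inequalities do not apply directly. This is where the earlier identification of $A^*_I$ via the linear system \eqref{eq:ar population 1} and its inherited $4d_0^2$-sparse block structure \eqref{eq-astar-prep}--\eqref{eq:effective support} is essential — it lets us treat $\widehat A_I$ as a \lasso estimator of a sparse target and recycle the standard oracle machinery with only a constant-factor inflation in the effective sparsity.
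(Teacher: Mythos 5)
Your overall plan coincides with the paper's: lower bound $\mathcal{L}(I)-\mathcal{L}(I_1)-\mathcal{L}(I_2)$ by a quadratic term controlled via the restricted-eigenvalue condition, minus a cross term, convert the quadratic into $\tfrac{|I_1||I_2|}{|I_1|+|I_2|}\kappa^2 \geq \tfrac{1}{2}\min\{|I_1|,|I_2|\}\kappa^2$ by the triangle inequality and the constrained minimization, and treat $\widehat A_I$ on the mixed interval as a \lasso estimator of the $4d_0^2$-sparse target $A^*_I$, which is precisely what \Cref{lemma:prorpulation AR sparsity} and \Cref{lemma: AR oracle lasso} deliver. The only organizational difference is that the paper first replaces $\widehat A_{I_j}$ by $A^*_{I_j}$ via \Cref{lemma:RSS AR lasso} (incurring slack $C\lambda^2 d_0^2$) and expands around the population matrix, whereas you expand around $\widehat A_{I_j}$ and defer the swap; both orderings work.

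There is, however, a genuine gap in your handling of the noise cross term. You claim that $\ell_1$-$\ell_\infty$ duality together with the deviation bound ``produces an error $\lesssim \lambda d_0\|\widehat A_{I_j}-\widehat A_I\|_1\lesssim\lambda^2 d_0^2$.'' Both inequalities are wrong. The deviation bound gives $\bigl\|\sum_{t\in I_j}\varepsilon_t X_{t-1}^\top\bigr\|_\infty\lesssim\sqrt{|I_j|\log(n\vee p)}\asymp\lambda\sqrt{|I_j|}$, so the prefactor scales with $\sqrt{|I_j|}$, not $d_0$; and $\|\widehat A_{I_j}-\widehat A_I\|_1$ is \emph{not} $O(\lambda d_0)$ — it is dominated by $\|A^*_{I_1}-A^*_{I_2}\|_1\leq d_0\kappa$, which does not shrink. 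Consequently the noise cross term is of order $\lambda\sqrt{|I_j|}\cdot d_0\kappa$, which can dwarf $\lambda^2 d_0^2$ when $|I_j|$ is large, and your assembled lower bound $\mathcal{L}(I)-\mathcal{L}(I_1)-\mathcal{L}(I_2)\geq c_1\kappa^2\min\{|I_1|,|I_2|\}-c_2\lambda^2 d_0^2$ does not follow. The missing ingredient — used explicitly by the paper in passing from \eqref{eq:one change point first step} to \eqref{eq-upper-long} — is the split $\|\Delta_j\|_1\leq d_0\|\Delta_j(\mathcal S)\|_2+\|\Delta_j(\mathcal S^c)\|_1$, followed by Young's inequality to absorb $\lambda\sqrt{|I_j|}\,d_0\|\Delta_j\|_2$ into the $\tfrac{c_x}{16}|I_j|\|\Delta_j\|_2^2$ portion of the quadratic lower bound, together with \Cref{lemma: AR oracle lasso} to bound $\|\widehat A_I(\mathcal S^c)\|_1\lesssim\lambda d_0^2/\sqrt{|I|}$ on the non-sparse piece. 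You should make this absorption step explicit; without it the cross term cannot be beaten down to $O(\lambda^2 d_0^2)$.
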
 

\begin{proof} 
If $|I| < \gamma$, then \eqref{eq-case-1-lemma-result} holds automatically.  If $|I| \geq \gamma$ and $\max\{|I_1|, \, |I_2|\} < \gamma$, then \eqref{eq-case-1-lemma-result} also holds.  In the rest of the proof, we assume that $\max\{|I_1|, \, |I_2|\} \geq \gamma$.  To show \eqref{eq-case-1-lemma-result}, we prove by contradiction, assuming that
	\[%begin{equation}\label{eq-contra-correct}
		\min\{|I_1|, \, |I_2|\} > C_{\epsilon} \left(\frac{\lambda^2 d_0^2  + \gamma}{\kappa^2}\right).
	\]%end{equation}
	Due to \eqref{eq-tuning-para-thm1} and the above, it holds that
	\begin{equation}\label{eq-contra}
		\min\{|I_1|, \, |I_2|\} > CK d_0^2  \log(n \vee p)/\kappa^2.
	\end{equation}
	Then \eqref{eq:one change point} leads to
	$$  \sum_{ t\in I}(X_{t}  -   \widehat{A }^\lambda_IX_{t-1} )^2 \leq   \sum_{t \in I_1} (  X_{t}  -   \widehat{A }^\lambda_{I_1} X_{t-1} )^2 + \sum_{t \in I_2}(X_{t}  -   \widehat{A }^\lambda_{I_2}  X_{t-1} )^2   + \gamma.  $$
		
It follows from \Cref{lemma:RSS AR lasso} and \eqref{eq:one change point} that, with probability at least $1 -    (n \vee p)^{-6}  $ that,  
	\begin{align}
		& \sum_{t \in I_1}(X_{t+1}  -   \widehat{A }^\lambda_IX_t )^2 +
		 \sum_{t \in I_2}(X_{t+1}  -   \widehat{A }^\lambda_I X_t)^2 =
		  \sum_{ t\in I}(X_{t+1}  -   \widehat{A }^\lambda_IX_t )^2 
		   \nonumber 
		   \\
		\leq & \sum_{t \in I_1} (  X_{t+1}  -   \widehat{A }^\lambda_{I_1} X_t )^2 
		+ \sum_{t \in I_2}(X_{t+1}  -   \widehat{A }^\lambda_{I_2}  X_t  )^2 +   \gamma   \nonumber \\
		\leq & \sum_{t \in I_1} (  X_{t+1}  -   A ^* _{I_1} X_t )^2 
		+ \sum_{t \in I_2}(X_{t+1}  -    A ^* _{I_2}  X_t  )^2 + \gamma  + 2C_1 \lambda^2 d_0^2 . \label{eq-lem-1cp-pf-1}
	\end{align}

Denoting $\Delta_i = \widehat{A}^\lambda_{I} - A _{I_i}^*$, $i = 1, 2$, \eqref{eq-lem-1cp-pf-1} leads to that  
	\begin{align}
		& \sum_{t\in I_1} (\Delta_1X_t )^2 + \sum_{t\in I _2 } ( \Delta_1X_t  )^2 
		\leq 
		2\sum_{t\in I_1} \varepsilon_{t+1 } ^\top  \Delta_1X_t + 2\sum_{t\in I_2} \varepsilon_{t+1 } ^\top  \Delta_1X_t     
+ 2 \gamma  		
		 + 2C_1 \lambda^2 d_0 ^2 
		\nonumber 
		\\
		\leq & 2 \left\|\sum_{t\in I_1}  \varepsilon_{t+1}  X_t^\top \right\|_{\infty} \|\Delta_1\|_1 + 2 \left\|\sum_{t\in I_2} \varepsilon_{t+1}  X_t^\top  \right\|_{\infty} \|\Delta_2\|_1 +  \gamma   	  + 2C_1 \lambda^2 d_0 ^2 
		 \nonumber 
		 \\
		\leq & 2 \left\|\sum_{t\in I_1} \varepsilon_{t+1}  X_t^\top  \right\|_{\infty} \bigl(\|\Delta_1(\mathcal S)\|_1 + \|\Delta_1(\mathcal S^c)\|_1\bigr)
		 \nonumber 
		 \\
		& \hspace{1cm} + 2 \left\|\sum_{t\in I_2} \varepsilon_{t+1}  X_t^\top  \right\|_{\infty} \bigl(\|\Delta_2( \mathcal S)\|_1 + \|\Delta_2( \mathcal S^c)\|_1\bigr) +   \gamma 		  + 2C_1 \lambda^2 d_0  ^2 
		\nonumber 
		\\
		\leq & 2 \left\|\sum_{t\in I_1}  \varepsilon_{t+1}  X_t^\top   \right\|_{\infty} \bigl( d_0 \|\Delta_1(\mathcal S)\|_2 + \|\Delta_1(\mathcal S^c)\|_1\bigr)  \nonumber \\
		& \hspace{1cm} + 2 \left\|\sum_{t\in I_2} \varepsilon_{t+1}  X_t^\top  \right\|_{\infty} \bigl( d_0 \|\Delta_2( \mathcal S)\|_2 + \|\Delta_2( \mathcal S^c)\|_1\bigr)  +  \gamma  	  + 2C_1 \lambda^2 d_0 ^2 .\label{eq:one change point first step}
	\end{align}
	
By \Cref{coro:restricted eigenvalue AR}({\bf a}), it holds that with at least probability $1 -(n\vee p)^{-5} $
	\begin{align}
		\eqref{eq:one change point first step} & \leq \frac{\lambda}{2} \big(\sqrt{|I_1| } d_0 \|\Delta_1( \mathcal S)\|_2 + \sqrt{|I_1|}\|\Delta_1(  \mathcal S^c)\|_1 + \sqrt{|I_2| } d_0  \|\Delta_2(  \mathcal S)\|_2 
		+ \sqrt{|I_2|}\|\Delta_2(
		 \mathcal S^c)\|_1\big) \nonumber \\
		 & \hspace{2cm} +   \gamma	 + 2C_3 \lambda^2 d_0 ^2  \nonumber
		 \\
		& \leq    \frac{ 4  \lambda^2 d_0 ^2  }{c_x}   + \frac{c_x  |I_1| \|\Delta_1\|_2^2  }{16}+  
		 \frac{ 4 \lambda^2 d_0 ^2 }{c_x}     + \frac{c_x  |I_2| \|\Delta_2\|_2^2 }{16} +  \lambda \frac { ( \sqrt{|I_1| }  + 
		  \sqrt{|I_2|})}{2} \|\widehat{A }^{\lambda}_I(  \mathcal S^c)\|_1 \nonumber \\
		  & \hspace{2cm} +  \gamma	 + 2C_3 \lambda^2 d_0 ^2  
		 \nonumber 
		 \\
		& \leq \frac{8 \lambda^2 d_0 ^2 }{c_x} + \frac{c_x  |I_1| \|\Delta_1\|_2^2}{16} + \frac{c_x  |I_2| \|\Delta_2\|_2^2}{16}  +  C  \lambda^2 d_0^2  +  \gamma   + 2C_2  \lambda^2 d_0 ^2  , \label{eq-upper-long}
	\end{align}
	where the second inequality follows from  H\ou lder's inequality
	and the last inequality follows from \Cref{lemma: AR oracle lasso} that with probability at least $1 - (n \vee p)^{-6}$ that
	$$\|\widehat A^\lambda_I(\mathcal S^c) \|_1 \le \frac{C\lambda d_0^2   }{\sqrt{|I|}}.$$

Therefore with probability at least $1 -  n^{-5}$
	\begin{align*}
	 \sum_{t \in I_1} (\Delta_1X_t )^2 \ge  & \frac{c_x |I_1|} {2} \|\Delta_1\|_2^2  -C_x \log(p) \|\Delta_1\|_1^2 \\
		\ge & \frac{c_x |I_1|} {2} \|\Delta_1\|_2^2  -  2C_x d_0^2 \log (p) \| \Delta_1 (\mathcal S )\|_2^2 
		-2C_x  \log (p) \| \Delta_1 (\mathcal S^c )\|_1^2 
		\\
		\ge &  \frac{c_x |I_1|} {4} \|\Delta_1\|_2^2 
		 - \frac{2C C_x^2\lambda^2 d_0^{4}   \log(p)  }{ |I|}
		 \\
		 \ge &  \frac{c_x |I_1|} {4} \|\Delta_1\|_2^2 -2C \lambda^2 d_0^2
	\end{align*}
where the first inequality follows from  \Cref{coro:restricted eigenvalue AR}({\bf b}),  the third inequality follows from \eqref{eq-contra}  and  \Cref{lemma: AR oracle lasso}, and the last inequality follows from \eqref{eq-contra}.

Then back to \eqref{eq:one change point first step}, we have 
\begin{align*}
 \frac{c_x |I_1|} {4} \|\Delta_1\|_2^2 -2C \lambda^2 d_0^2  + \frac{c_x |I_2|} {4} \|\Delta_2\|_2^2 -2C \lambda^2 d_0^2 
 \le C  \lambda^2 d_0 ^2 + \frac{c_x  |I_1| \|\Delta_1\|_2^2}{16} + \frac{c_x  |I_2| \|\Delta_2\|_2^2}{16}   +   \gamma	 ,
\end{align*}
which directly gives
\begin{align*}
 |I_1|  \|\Delta_1\|_2^2   +  |I_2|  \|\Delta_2\|_2^2  
 \le C  \lambda^2 d_0 ^2   +2\gamma		  ,
\end{align*}
Since 
$$ |I_1|  \|\Delta_1\|_2^2  + |I_2 |  \|\Delta_2\|_2^2 \ge \inf_{B \in \mathbb R^{p\times p } } |I_1| \|A_{I_1}^* -B\|_2^2 
+    |I_2| \|A_{I_2}^* -B\|_2^2   =\frac{|I_1||I_2| }{|I_1|+|I_2|}\kappa^2 \ge \min\{|I_1|,|I_2|\}\kappa^2/2,
 $$
we have that
 $$  \min\{|I_1|,|I_2|\} \le \frac{C d_0 ^2\lambda^2  +\gamma  }{  \kappa^2 } + \frac{2 \gamma}{ \kappa^2}, $$
which completes the proof.

\end{proof}

\begin{lemma}[Case (ii)] \label{lemma:two change point}
With all the conditions and notation in \Cref{proposition:dp step 1}, assume that $I = [s, e)$ containing exactly two change points $\eta_1$ and $\eta_2$.  Denote $ I_1 =[s, \eta_1)$, $I_2 = [\eta_1, \eta_2)$, $I_3 = [\eta_2, e)$, $\|  A_{I_1} ^*  -A _{I_2} ^* \|_2 = \kappa_1$ and $\|  A_{I_2} ^*  -A _{I_3} ^* \|_2 = \kappa_2$.  If in addition it holds that  
\begin{align}\label{eq:two change point start}
		 \LL (I)  \le \LL(I_1)   + \LL(I_2 )    + \LL(I_3)  +   2\gamma, 
 \end{align}
	then 
	\begin{align}\label{eq-case-2-lemma-result}
		\max\{|I_1|, \, |I_3|\} \leq C_{\epsilon} \left(\frac{\lambda^2 d_0^2 + \gamma}{\kappa^2} \right)
	\end{align} 
	with probability at least $1 -   (n \vee p)^{-5}$.
\end{lemma}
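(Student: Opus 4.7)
The plan is to mirror the structure of the proof of \Cref{lemma:one change point} while adapting to the three-piece partition $\{I_1,I_2,I_3\}$ of $I$. Setting $\Delta_i = \widehat{A}^\lambda_I - A^*_{I_i}$ for $i=1,2,3$, I aim to sandwich the quantity $\sum_{i=1}^3 |I_i|\,\|\Delta_i\|_2^2$ between an upper bound of order $\lambda^2 d_0^2 + \gamma$ and a lower bound essentially proportional to $\max\{|I_1|,|I_3|\}\,\kappa^2$. As in the one-change-point case, I first dispose of the degenerate situations $|I|<\gamma$ and $\max\{|I_1|,|I_3|\}<\gamma$, where \eqref{eq-case-2-lemma-result} is immediate.

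For the upper bound, I start from \eqref{eq:two change point start} and use \Cref{lemma:RSS AR lasso} on each piece to swap the individual fits $\widehat{A}^\lambda_{I_i}$ for the population matrices $A^*_{I_i}$, paying a $2C_1\lambda^2 d_0^2$ cost per piece. Expanding the squared residuals against $X_t = A^*_t X_{t-1}+\varepsilon_t$ produces the inequality
\begin{align*}
\sum_{i=1}^{3}\sum_{t\in I_i}(\Delta_i X_{t-1})^2 \;\le\; 2\sum_{i=1}^{3}\sum_{t\in I_i}\varepsilon_t^\top\Delta_i X_{t-1} + 2\gamma + C\lambda^2 d_0^2.
\end{align*}
I then apply $\ell_\infty/\ell_1$ duality to each noise cross-term, decompose $\Delta_i = \Delta_i(\mathcal{S}) + \Delta_i(\mathcal{S}^c)$, turn the $\mathcal{S}$-part into $\ell_2$ via $|\mathcal{S}|\le 4d_0^2$, control $\|\sum_{t\in I_i}\varepsilon_t X_{t-1}^\top\|_\infty$ by \Cref{coro:restricted eigenvalue AR}(\textbf{a}), and use \Cref{lemma: AR oracle lasso} to absorb the $\Delta_i(\mathcal{S}^c)$ mass into $\lambda^2 d_0^2$. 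On the left, \Cref{coro:restricted eigenvalue AR}(\textbf{b}) lower bounds each $\sum_{t\in I_i}(\Delta_i X_{t-1})^2$ by $\tfrac{c_x}{4}|I_i|\,\|\Delta_i\|_2^2 - C\lambda^2 d_0^2$, after which rearrangement (exactly as in the last display of \Cref{lemma:one change point}) yields $\sum_i |I_i|\,\|\Delta_i\|_2^2 \le C(\lambda^2 d_0^2 + \gamma)$.

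For the lower bound, I use the identity
\begin{align*}
\inf_{B\in\mathbb{R}^{p\times p}}\sum_{i=1}^{3}|I_i|\,\|A^*_{I_i}-B\|_2^2 \;=\; \frac{1}{|I|}\sum_{i<j}|I_i||I_j|\,\|A^*_{I_i}-A^*_{I_j}\|_2^2 \;\ge\; \frac{|I_1||I_2|\,\kappa_1^2 + |I_2||I_3|\,\kappa_2^2}{|I|},
\end{align*}
and split into cases. In the favorable case $\max\{|I_1|,|I_3|\}\le |I_2|/2$, one has $|I_2|/|I|\ge 1/2$, so the right-hand side is at least $(|I_1|+|I_3|)\kappa^2/2$, which combined with the upper bound and $\kappa\le\min\{\kappa_1,\kappa_2\}$ yields \eqref{eq-case-2-lemma-result}. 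In the opposite case, say $|I_1|>|I_2|/2$, I keep only the first pair and obtain $|I_1||I_2|/(|I_1|+|I_2|)\ge |I_2|/3\ge \Delta/3$, so that $\sum_i |I_i|\,\|\Delta_i\|_2^2 \ge \Delta\kappa^2/3$; together with the upper bound this forces $\Delta\kappa^2 \le C(\lambda^2 d_0^2+\gamma)$, which contradicts \Cref{assume:AR high dim coefficient}(\textbf{c}) under the tuning \eqref{eq-tuning-para-thm1}.

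The main obstacle will be the bookkeeping in the upper bound, where three $\Delta_i$'s are controlled simultaneously and one must verify that \Cref{lemma: AR oracle lasso} applies to $\widehat{A}^\lambda_I$ even though $I$ straddles two change points; this is legitimate because the block structure \eqref{eq-astar-prep} guarantees that the effective support of $A^*_I$ defined via \eqref{eq:ar population 1} is still contained in $\mathcal{S}$ with $|\mathcal{S}|\le 4d_0^2$. A secondary subtlety is that \Cref{coro:restricted eigenvalue AR}(\textbf{b}) is applied separately on each $I_i$, so a union bound over the polynomially many candidate triples $(I_1,I_2,I_3)$ is needed to match the claimed probability $1-(n\vee p)^{-5}$, exactly as done after \Cref{lemma:one change point}.
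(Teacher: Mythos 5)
Your proposal follows the same overall architecture as the paper's proof — upper-bound $\sum_i |I_i|\|\Delta_i\|_2^2$ via the RSS-swap and restricted eigenvalue, lower-bound it by a variance-type identity, and invoke the signal-to-noise condition — but with a cleaner endgame. Where the paper proceeds by contradiction on a WLOG-minimal $|I_1|$ and applies the two-piece pairwise infimum over $B$ for $(I_1,I_2)$ (then uses $|I_2|\ge\Delta$), you directly use the exact three-piece identity
$\inf_B\sum_i|I_i|\|A^*_{I_i}-B\|_2^2 = |I|^{-1}\sum_{i<j}|I_i||I_j|\|A^*_{I_i}-A^*_{I_j}\|_2^2$
and split according to whether $\max\{|I_1|,|I_3|\}\le |I_2|/2$. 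This gives the bound on the max in a single, unconditional step (avoiding the WLOG-direction confusion that is, incidentally, apparently typo'd in the paper, where ``$|I_1|\le|I_3|$'' would bound only the $\min$). The geometric case split and the exact weighted-variance identity are a genuinely nicer presentation of the same idea.

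However, your sketch has a real (if minor) gap. You dispose of the degenerate situations $|I|<\gamma$ and $\max\{|I_1|,|I_3|\}<\gamma$, but you do \emph{not} handle the mixed case where, say, $|I_3|<\gamma\le|I_1|$. In that case $\mathcal{L}(I_3)=0$ by the definition in \eqref{eq:VARD likelihood}, so the RSS-swap via \Cref{lemma:RSS AR lasso} cannot be applied to $I_3$ (the swap inequality $\mathcal{L}(I_3)\ge\sum_{t\in I_3}\|X_{t+1}-A^*_{I_3}X_t\|^2 - C\lambda^2 d_0^2$ need not hold), and \Cref{coro:restricted eigenvalue AR}(\textbf{b}) does not apply on the short piece either. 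Your upper-bound step ``use \Cref{lemma:RSS AR lasso} on each piece'' silently assumes all three $|I_i|\ge\gamma$. The paper handles this precisely as its Case~2: when $|I_3|\le\gamma$ one drops the $I_3$ contribution on the right (it is zero), keeps $\sum_{t\in I_1\cup I_2}\|X_{t+1}-\widehat A^\lambda_I X_t\|^2 \le \sum_{t\in I}\|\cdots\|^2$ on the left, and runs the two-piece argument on $(I_1,I_2)$; the conclusion then combines with $|I_3|\le\gamma\le C_\epsilon(\lambda^2 d_0^2+\gamma)/\kappa^2$. You should add this branch for the argument to be complete; once added, your proof is correct and cleaner than the paper's.
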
 

\begin{proof} 
Since $|I|, |I_2| \geq \Delta$, due to \Cref{assume:AR high dim coefficient}, it holds that $|I|, |I_2| > \gamma$.  In the rest of the proof, without loss of generality, we assume that $|I_1| \leq |I_3|$.  To show \eqref{eq-case-2-lemma-result}, we prove by contradiction, assuming that
	\begin{equation}\label{eq-contra-2}
		|I_1| > C_{\epsilon} \left(\frac{\lambda^2 d_0^2 + \gamma}{\kappa^2} \right).
	\end{equation}
	Due to \eqref{eq-tuning-para-thm1}, we have that $|I_1| >  C  d_0^2 \log(n \vee p)$.  Denote $\Delta_i = \widehat A ^\lambda_I  - A _{I_i}^*$, $i = 1, 2, 3$.  We then consider the following two cases.

\vskip 3mm
\noindent {\bf Case 1.}  Suppose
	\[
		|I_3| > \gamma .
	\]
	Then 
	\eqref{eq:two change point start} implies that  
	$$
		  \sum_{t\in I }  \|  X_{t+1} -\widehat A^\lambda_ I X_t \|_2^2
		\leq   \sum_{t\in I_1 }  \| X_{t+1} -\widehat A^\lambda_{ I_1}  X_t  \|_2^2 
		+
		 \sum_{t\in I_2 }  \| X_{t+1} -\widehat A^\lambda_ {I_2}  X_t \|_2 ^2 
		  +
		   \sum_{t\in I_3} \|  X_{t+1} -\widehat A^\lambda_{ I_3}  X_t    \|_2^2 + 2  \gamma   
	$$  
The above display and  \Cref{lemma:RSS AR lasso}   give 
$$
		  \sum_{t\in I }  \|  X_{t+1} -\widehat A^\lambda_ I X_t  \| _2 ^2
		\leq   \sum_{t\in I_1 }  \| X_{t+1} -  A ^* _{I_1}  X_t \|_2^2 
		+
		 \sum_{t\in I_2 }  \| X_{t+1} - A ^* _{I_ 2}  X_t  \|_2^2 
		  +
		   \sum_{t\in I_3} \| X_{t+1} - A ^* _{I_3 } X_t \| _2^2 +2  \gamma   
     + 3C_1 \lambda^2 d_0^2 
	$$   
	which in term implies that
	\begin{align}
	& \sum_{i=1}^3 \sum_{t\in I_i} ( \Delta_iX_t )^2 \leq 2 \sum_{i=1}^3 \sum_{t\in I_i} \varepsilon_{t+1}^\top   \Delta_i X_t   + 3C \lambda^2 d_0 ^2  +  2 \gamma \nonumber \\
	\leq & 2\sum_{i=1}^3 \left \| \frac{1}{\sqrt {|I_i| }} \sum_{t\in I_i } \varepsilon_{t+1}  X_t^ \top   \right \|_{\infty } \| \sqrt { | I_i|}\Delta_i \|_1 + 3C \lambda^2 d_0 ^2  +   2 \gamma  \nonumber \\
   \le &  \frac{\lambda}{2} \sum_{i=1}^3 \left( d_0  \sqrt { |I_i| }\|\Delta_i(\mathcal S)\|_2  + \sqrt {|I_i| } \|\Delta_i(\mathcal S^c) \|_1 \right) + 3C \lambda^2 d_0 ^2  +  2\gamma,  \nonumber
	\end{align}
	where the last inequality follows from   \Cref{coro:restricted eigenvalue AR}({\bf a}).
It follows from identical arguments in \Cref{lemma:one change point} that, with probability at least $1 - 6 (n \vee p)^{-5}   $,
	\[
		\min\{|I_1|,\, |I_2|\} \le C_{\epsilon} \left(\frac{\lambda^2 d_0 ^2  + \gamma}{\kappa^2}\right).
	\]
	Since $|I_2| \ge \Delta$ by   \Cref{assume:AR high dim coefficient},	 it holds that
	\[ 	|I_1|  \le C_{\epsilon} \left(\frac{\lambda^2 d_0 ^2+ \gamma}{\kappa^2}\right),
	\]
	which contradicts \eqref{eq-contra-2}.
 
\vskip 3mm
\noindent {\bf Case 2.}  Suppose that 
	\[
		|I_3|     \le   \gamma. 
	\]   
Then 
	\Cref{eq:two change point start} implies that  
	$$
		  \sum_{t\in I } ( X_{t+1} -\widehat A^\lambda_ I X_t )^2
		\leq   \sum_{t\in I_1 } (X_{t+1} -\widehat A^\lambda_{ I_1}  X_t  )^2 
		+
		 \sum_{t\in I_2 } (X_{t+1} -\widehat A^\lambda_ {I_2}  X_t )^2 
		  +
		2  \gamma  
	$$  	
	The above display and  \Cref{lemma:RSS AR lasso}   give 
$$  \sum_{t\in I_1 \cup  I_2 } ( X_{t+1} -\widehat A^\lambda_ I X_t )^2 \le 
		  \sum_{t\in I } ( X_{t+1} -\widehat A^\lambda_ I X_t )^2
		\leq   \sum_{t\in I_1 } (X_{t+1} -  A ^* _{I_1}  X_t  )^2 
		+
		 \sum_{t\in I_2 } (X_{t+1} - A ^* _{I_ 2}  X_t )^2 		+   
		 2\gamma     + 2C \lambda^2 d_0^2 
	$$   
	
	It follows from identical arguments in \Cref{lemma:one change point} that, with probability at least $1 -   (n \vee p)^{-5}   $,
	\[
		\min\{|I_1|,\, |I_2|\} \le C_{\epsilon} \left(\frac{\lambda^2 d_0 ^2  + \gamma}{\kappa^2}\right).
	\]
	Since $|I_2| \ge \Delta$ by   \Cref{assume:AR high dim coefficient},	 it holds that
	\[ 	|I_1|  \le C_{\epsilon} \left(\frac{\lambda^2 d_0 ^2+ \gamma}{\kappa^2}\right),
	\]
	which contradicts \eqref{eq-contra-2}.
\end{proof}

\begin{lemma}[Case (iii)] \label{lemma:no change point}
With all the conditions and notation in \Cref{proposition:dp step 1}, assume that there exists no true change point in $I = (s, e]$.  With probability at least $1-(n\vee p)^{-4}$, it holds that 
$$ \LL (I)  <  \min_{t\in ((s,e)}  \left\{\LL(s,t]) +\LL((t,e]) \right\} + \gamma .$$ 
\end{lemma}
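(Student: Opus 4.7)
The proof hinges on the fact that, when $I = (s,e]$ contains no change point, there is a single population parameter $A^* = A_I^* = A^*_t$ governing the dynamics for every $t \in I$, so that $A^*$ serves as the common target of the Lasso estimators $\widehat A^{\lambda}_I$, $\widehat A^{\lambda}_{(s,t]}$, and $\widehat A^{\lambda}_{(t,e]}$ for any split $t \in (s,e)$. The plan is to show that each of $\mathcal L(I)$, $\mathcal L((s,t])$, and $\mathcal L((t,e])$ is within $C\lambda^2 d_0^2$ of the corresponding \emph{oracle} residual sum of squares $\sum_{u \in J}\|X_{u+1} - A^* X_u\|_2^2 = \sum_{u \in J}\|\varepsilon_{u+1}\|_2^2$, and then exploit the trivial additive decomposition $\sum_{u \in I}\|\varepsilon_{u+1}\|_2^2 = \sum_{u \in (s,t]}\|\varepsilon_{u+1}\|_2^2 + \sum_{u \in (t,e]}\|\varepsilon_{u+1}\|_2^2$, which follows because the noise sequence is common and the true parameter does not switch.

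The key ingredient is an oracle inequality for the Lasso on a generic interval $J$ without change points, which can be read off from \Cref{lemma:RSS AR lasso} and \Cref{lemma: AR oracle lasso} combined with the uniform restricted eigenvalue bound of \Cref{coro:restricted eigenvalue AR}(b) and the noise deviation bound $\|\sum_{u \in J}\varepsilon_{u+1}X_u^\top\|_{\infty} \lesssim \sqrt{|J|\log(n\vee p)}$ of \Cref{coro:restricted eigenvalue AR}(a). Together these imply, on a single high-probability event,
\[
\bigl|\mathcal L(J) - \textstyle\sum_{u \in J}\|\varepsilon_{u+1}\|_2^2\bigr| \leq C\lambda^2 d_0^2,
\]
uniformly over every sub-interval $J \subset I$ of length at least $\gamma$; the derivation mirrors the bracketing of the cross term $\sum_u \varepsilon_{u+1}^\top(\widehat A_J^\lambda - A^*)X_u$ by $\lambda d_0^2$ already executed in the proof of \Cref{lemma:one change point}.

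Applying this uniform inequality to $J \in \{I, (s,t], (t,e]\}$ and using the oracle additivity yields
\[
\mathcal L(I) - \mathcal L((s,t]) - \mathcal L((t,e]) \leq 3C\lambda^2 d_0^2.
\]
Substituting the tuning-parameter choices $\lambda = C_{\lambda}\sqrt{\log(n\vee p)}$ and $\gamma = C_{\gamma}(K+1)d_0^2\log(n \vee p)$ from \eqref{eq-tuning-para-thm1}, and taking $C_{\gamma}$ sufficiently large relative to $C$ and $C_{\lambda}^2$, the right-hand side is strictly less than $\gamma$, as required. The degenerate cases in which $|I|<\gamma$ or one of the pieces has length below $\gamma$ are handled directly from the definition $\mathcal L(\cdot)=0$: when $|I|<\gamma$ the claim reduces to $0<\gamma$, and when only one piece is short one applies the same oracle inequality on the remaining long piece together with the convention that the short-piece loss is zero, absorbing the residual into the same $\gamma$ budget.

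The final probability bound $1 - (n\vee p)^{-4}$ is obtained by a union bound over the $O(n^2)$ intervals $I$ and the $O(n)$ internal splits $t$, which is affordable because the underlying deviation and restricted eigenvalue bounds are established at level $(n\vee p)^{-C}$ for a sufficiently large $C$. The main obstacle is precisely establishing those restricted eigenvalue and noise deviation controls \emph{uniformly} over all sub-intervals under the temporal dependence of the VAR dynamics; this is where the extension of the stationary-VAR arguments of \cite{basu2015regularized} carried out in \Cref{sec:inequalities} is essential, since the classical independent-sample derivations of the restricted eigenvalue condition do not directly apply.
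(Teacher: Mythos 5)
Your proof follows essentially the same route as the paper's: introduce the population loss $\mathcal L^*(J)$ (which, on an interval with no change point, is exactly $\sum_{u\in J}\|\varepsilon_{u+1}\|_2^2$ up to the $\gamma$-truncation), invoke the uniform bound $|\mathcal L(J)-\mathcal L^*(J)|\le C d_0^2\lambda^2<\gamma/3$ from \Cref{coro:RSS AR lasso} for $J\in\{I,(s,t],(t,e]\}$, use the exact additivity $\mathcal L^*(I)=\mathcal L^*((s,t])+\mathcal L^*((t,e])$ guaranteed by $A^*_I=A^*_{(s,t]}=A^*_{(t,e]}$, and close with a union bound over the $O(n)$ internal split points. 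This matches the paper's argument, including the choice of $\gamma$ large enough that $3Cd_0^2\lambda^2<\gamma$.

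One caveat about your handling of the degenerate split: you claim that when only one piece is shorter than $\gamma$, the residual from the short piece can be ``absorbed into the $\gamma$ budget.'' That step does not go through. If $|I_1|<\gamma\le |I_2|$ with $|I|\ge\gamma$, then $\mathcal L(I_1)=\mathcal L^*(I_1)=0$ while $\mathcal L^*(I)$ still includes the $|I_1|$ terms from the short piece, so $\mathcal L^*(I)-\mathcal L^*(I_1)-\mathcal L^*(I_2)=\sum_{u\in I_1}\|\varepsilon_{u+1}\|_2^2\asymp |I_1|\,p\,\sigma_\varepsilon^2$. This residual scales with $p$, so it is not controlled by $\gamma\asymp K d_0^2\log(n\vee p)$ when $p$ is large. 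You should note that this is not a flaw introduced by you: the paper's own proof also tacitly assumes $\mathcal L^*(I)=\mathcal L^*(I_1)+\mathcal L^*(I_2)$, which only holds when both sub-intervals have length at least $\gamma$. In the downstream use inside \Cref{proposition:dp step 1}, the lemma is applied to splits where both pieces lie in the optimal partition $\widehat{\mathcal P}$ and hence each has length at least $\gamma$, so the problematic regime never arises there; but as a standalone claim about $\min_{t\in(s,e)}$ over all split points, the lemma (and both proofs) require the additional restriction $\min\{t-s,\,e-t\}\ge\gamma$, or a sharper argument for short pieces.
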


\begin{proof} 
For any fixed $t \in (s, e)$, let $I_1 = (s,t]$ and $I_2 = (t,e]$.  By \Cref{coro:RSS AR lasso}, it holds that with probability at least $1-  (n\vee p)^{-5}$,
	\begin{align*}
		\max_{J \in \{I_1, I_2, I\}} \left |  \LL(J)   - \LL^ * (J)   \right|  
		\leq   C d_0\lambda^2 <\gamma/3,
	\end{align*}
	where $\mathcal{L}^*(J)$ is the population counterpart of $\mathcal{L}(J)$, replacing the coefficient matrix estimator with its population counterpart.  
	Since $ A^*_ {I} =A ^*_ {I_1} =A ^*_ {I_2}$,  we have that with probability at least $1-  (n\vee p)^{-5}$,
	$$ \LL (I)  <     \LL(I_1) +\LL(I_2 )  + \gamma .$$
	Then, using a union bound argument, with probability at least $1-(n\vee p)^{-4}$, it holds that 
$$ \LL (I)  <  \min_{t\in (s,e)}  \left\{\LL((s,t]) +\LL((t,e]) \right\} + \gamma .$$
\end{proof}

\begin{lemma}[Case (iv)] \label{lemma:three change point}
With all the conditions and notation in \Cref{proposition:dp step 1}, assume that $I = [s, e)$ contains  $J$ true change points $\{ \eta_{j}\}_{j=1}^J$, where $|J| \ge 3$.  Then with probability at least $1 -  C(n \vee p)^{-4}$, with an absolute constant $C > 0$,
	\[
		\LL(I)  > \sum_{j=1}^{J+1}  \LL(I_j ) + J \gamma, 
	\]
	where $I_1 =(s,\eta_{1}]$, $I_j = [\eta_{j-1},\eta_{j})$ for any $j \in\{2, \ldots, J\}$ and $I_{J+1} = [\eta_J, e)$.
\end{lemma}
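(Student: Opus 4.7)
The plan follows the same blueprint as Case (iii) but runs the inequality in the opposite direction: one first replaces every empirical Lasso loss $\mathcal{L}(J)$ by its population counterpart $\mathcal{L}^*(J) := \sum_{t\in J}\|X_{t+1} - A^*_J X_t\|_2^2$, with $A^*_J$ the oracle matrix defined in \eqref{eq:ar population 1}, and then exhibits a population gap so large that it absorbs both the $O(\lambda^2 d_0^2)$ approximation error and the full penalty $J\gamma$. By \Cref{coro:RSS AR lasso} applied uniformly to the at most $J+2 \le K+2$ intervals $\{I_1,\dots,I_{J+1},I\}$, the event $|\mathcal{L}(J)-\mathcal{L}^*(J)| \le C_1 \lambda^2 d_0^2$ holds with probability at least $1-C(n\vee p)^{-4}$. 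On that event it is enough to prove
\[
\mathcal{L}^*(I) - \sum_{j=1}^{J+1}\mathcal{L}^*(I_j) \;\ge\; (K+2)C_1\lambda^2 d_0^2 + J\gamma.
\]

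To handle the population gap, expand $X_{t+1} = A^*_t X_t + \varepsilon_{t+1}$ inside each squared residual. On $I_j$ the coefficient $A^*_t$ is constant and equals $A^*_{I_j}$, so the stochastic linear term $\sum_{t\in I_j}\varepsilon_{t+1}^\top(A^*_{I_j}-A^*_J)X_t$ vanishes in expectation and the noise–noise terms cancel across the two sides. What survives is the quadratic form
\[
Q \;:=\; \sum_{j=1}^{J+1}\sum_{t\in I_j}\bigl\|(A^*_{I_j} - A^*_I)X_t\bigr\|_2^2,
\]
minus a residual cross term of order $\sum_j\bigl\|\sum_{t\in I_j}\varepsilon_{t+1}X_t^\top\bigr\|_\infty \|A^*_{I_j}-A^*_I\|_1$. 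The deviation inequalities in \Cref{sec:inequalities} control that cross term by $\lambda\sqrt{|I|}\,d_0\,\max_j\|A^*_{I_j}-A^*_I\|_2$, which, by Young's inequality, can be absorbed into $Q/2$ at the cost of a further $O(\lambda^2 d_0^2)$ additive term.

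The main step is therefore to lower bound $Q$. Using the spectral density condition $\Lambda_{\min}(\mathbb{E}[X_tX_t^\top])\ge 2\pi\mathfrak{m}$ from \Cref{assume:AR high dim coefficient}(\textbf{b}) together with the restricted-eigenvalue type bound in \Cref{coro:restricted eigenvalue AR}(\textbf{b}) to pass from expectation to a high-probability statement, one obtains
\[
Q \;\gtrsim\; \mathfrak{m}\sum_{j=1}^{J+1} |I_j|\,\|A^*_{I_j}-A^*_I\|_2^2.
\]
The key geometric observation is that $A^*_I$ is the single matrix that minimizes the weighted squared-distance functional on the right-hand side (by the normal equations \eqref{eq:ar population 1}), so its infimum is a variance of the piecewise-constant sequence $\{A^*_{I_j}\}$ weighted by $|I_j|$. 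Since there are at least three true change points inside $I$, at least two of the interior intervals have length $\ge \Delta$, and consecutive $A^*_{I_j}$'s differ by at least $\kappa$ in Frobenius norm, so a pigeonhole/variance-decomposition argument gives
\[
\sum_{j=1}^{J+1}|I_j|\,\|A^*_{I_j}-A^*_I\|_2^2 \;\ge\; c\,\Delta\,\kappa^2
\]
for an absolute constant $c>0$. This is the step I expect to be the main obstacle, since $A^*_I$ is not a simple average of the $A^*_{I_j}$ but the solution to a weighted linear system involving the lag-one covariances, so verifying the variance identity requires a careful use of \eqref{eq:ar population 1} and the positive-definiteness of the covariance matrices $\mathbb{E}[X_tX_t^\top]$.

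Putting it all together and choosing $\gamma \asymp K d_0^2\log(n\vee p)$ as in \eqref{eq-tuning-para-thm1}, the signal-to-noise condition $\Delta\kappa^2 \ge C_{\mathrm{SNR}} d_0^2 K\log^{1+\xi}(n\vee p)$ gives
\[
c\,\mathfrak{m}\,\Delta\kappa^2 \;\gg\; (K+2)C_1\lambda^2 d_0^2 + J\gamma,
\]
for $n$ sufficiently large, which is exactly the required strict inequality. A final union bound over the at most $n^2$ possible intervals $I$ converts the $(n\vee p)^{-4}$ bound to $(n\vee p)^{-4}$ at the level of the statement (by inflating the exponent in the deviation bounds, as in the proof of \Cref{lemma:one change point}), completing the sketch.
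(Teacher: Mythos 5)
The overall route is sound and closely parallels the paper's: compare the empirical loss on $I$ to the losses on the pieces $I_j$ by passing to population quantities, exhibit a quadratic gap governed by the Frobenius distances $\|A^*_{I_j}-A^*_I\|_2$, absorb the cross terms via deviation inequalities, and invoke the SNR. The paper phrases this as a proof by contradiction centered on $\Delta_j = \widehat A^\lambda_I - A^*_{I_j}$ and uses $\inf_B\{|I_j|\|A^*_{I_j}-B\|_2^2+|I_{j+1}|\|A^*_{I_{j+1}}-B\|_2^2\}$, whereas you argue directly with $A^*_{I_j}-A^*_I$; both are fine. However, there is a genuine gap in the magnitude of the population signal you claim.

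You bound the signal by $Q \ge c\,\Delta\kappa^2$, but this does not scale with $J$, while the penalty you must absorb is $J\gamma \asymp J\cdot K d_0^2\log(n\vee p)$. With $J$ as large as $K$, the SNR condition $\Delta\kappa^2 \gtrsim K d_0^2\log^{1+\xi}(n\vee p)$ gives $\Delta\kappa^2/(J\gamma)\lesssim \log^\xi(n\vee p)/K$, which need not be large, so a single copy of $\Delta\kappa^2$ cannot dominate $J\gamma$. What you actually need (and what the argument does yield) is $Q \gtrsim (J-2)\Delta\kappa^2$: for every pair of consecutive interior intervals $I_j, I_{j+1}$ with $j\in\{2,\dots,J-1\}$, one has $|I_j|\|A^*_{I_j}-A^*_I\|_2^2 + |I_{j+1}|\|A^*_{I_{j+1}}-A^*_I\|_2^2 \ge \min\{|I_j|,|I_{j+1}|\}\kappa^2/2 \ge \Delta\kappa^2/2$, and summing these $J-2$ overlapping inequalities (and dividing by two) gives the needed $J$-scaling. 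With that correction the comparison against $J\gamma + O(J)\lambda^2 d_0^2$ closes, since $(J-2)/J \ge 1/3$ for $J\ge 3$ and the SNR kills the residual ratio $\gamma/(\Delta\kappa^2)$.

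Two smaller remarks. First, your statement that $A^*_I$ \emph{minimizes} the weighted Frobenius functional $\sum_j |I_j|\|A^*_{I_j}-B\|_2^2$ is incorrect: by \eqref{eq:ar population 1}, $A^*_I$ minimizes the covariance-weighted functional $\sum_{t\in I}\operatorname{tr}\bigl[(B-A^*_t)\mathbb{E}(X_tX_t^\top)(B-A^*_t)^\top\bigr]$, which is not the same thing. Fortunately this does not matter, because the variance lower bound you need holds uniformly over all $B$, and hence in particular for $A^*_I$; you should phrase the step as an infimum over $B$, as the paper does, rather than as a minimizer identity. Second, you should note that short boundary intervals $I_1, I_{J+1}$ with $|I_j|<\gamma$ have $\mathcal L(I_j)=\mathcal L^*(I_j)=0$ by construction (see \eqref{eq:VARD likelihood}), so they can simply be dropped from the lower bound; the corresponding noise terms on $I$ are nonnegative and only help.
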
 

\begin{proof}
Since $J \geq 3$, we have that $|I| > \gamma$ and $\LL(I) =  	\sum_{t\in I }  \| X_{t+1} - \widehat A ^\lambda_ I X_t  \|_2^2$.  We prove by contradiction, assuming that
	\begin{align}\label{eq:case 4 contradiction start}
			\sum_{t\in I }  \| X_{t+1} - \widehat A ^\lambda_ I X_t  \|_2^2 \leq \sum_{j=1}^{J+1}    \LL(I_j ) + J \gamma. 
	\end{align}
	Note that $ |I_j| \ge \Delta   \ge \gamma  $ for all $ 2 \le j \le J$.		Let $\Delta_i = \widehat A^\lambda_I  - A _{I_i}^*$, $i = 1, \ldots, J+1$.  From \Cref{coro:RSS AR lasso}, we have that with probability at least $1 - (n \vee p)^{-4}$,
	\begin{align}\label{eq:case 4 contradiction start 1}
			\sum_{t\in I }  \| X_{t+1} - \widehat A ^\lambda_ I X_t  \|_2^2 
			\leq  \sum_{j=1}^{J+1}    \LL^* (I_j ) +JC\lambda^2 d_0^2    + J \gamma,
	\end{align}
	where $\mathcal{L}^*(\cdot)$ denotes the population counterpart of $\mathcal{L}(\cdot)$, by replacing the coefficient matrix estimator with its population  counterpart.  In the rest of the proof, without loss of generality, we assume that $|I_1| \leq |I_{J+1}|$.  

There are three cases: (1) $|I_1| \geq \gamma$, (2) $|I_1| < \gamma \leq |I_{J+1}|$ and (3) $|I_{J+1}| < \gamma$.  All these three cases can be shown using very similar arguments, and cases (2), (3) are both simpler than case (1), so in the sequel, we will only show case (1).

Suppose that $\min\{|I_1|,\, |I_{J+1} |\} \ge \gamma $.  Then \eqref{eq:case 4 contradiction start 1}  gives 
\begin{align}\label{eq:case 4 contradiction start 2}
\sum_{t\in I }  \| X_{t+1} - \widehat A ^\lambda_ I X_t  \|_2^2 \leq \sum_{j=1}^{J+1}   \sum_{t\in I_{j }  } \| X_{t+1} -
  A ^*_ {I_j}  X_t  \|_2^2 + JC\lambda^2 d_0^2  + J \gamma . 
\end{align}
	which implies that 
	\begin{align} 
		\sum_{j=1}^{J+1} \sum_{t\in I_j}  \|  \Delta_j X_t\| _2 ^2 \leq 2
		 \sum_{j=1}^{J+1}  \sum_{t\in I_j} \varepsilon_{t+1}  ^\top \Delta_j  X_t  + JC\lambda^2 d_0^2  + J \gamma . \label{eq:three change point first step}
	\end{align}
By \Cref{coro:restricted eigenvalue AR}({\bf a}), and $|I_j| \ge C_\gamma d_0^2 \lambda^2 $ for all $1\le j\le J+1$,
   it holds that with probability at least $1 - (n \vee p)^{-4}$,
	\begin{align}  
		& \sum_{t\in I_j}  \varepsilon_{t+1}  ^\top \Delta_j  X_t  
		\leq 
		\left\|\frac{1}{\sqrt{|I_j|}}  \sum_{t\in I_j } \varepsilon_{t+1}  X_t ^\top  \right\|_{\infty} \| \sqrt { | I_j| } \Delta_j\|_1 \nonumber \\
		 \leq  & \lambda/4 \left(d_0 \sqrt{ |I_j|}\|\Delta_j(\mathcal S)\|_2 + \sqrt{|I_j|} \|\Delta_j( \mathcal S^c)\|_1\right) \nonumber  \\
		\leq & \frac{4\lambda^2 d_0 ^2 }{c_x^2} + \frac{c_x^2|I_j|}{16} \|\Delta_j\|_2^2  + \lambda/4 \sqrt {|I_j|}
		 \|\widehat A^\lambda_{I}(S^c)\|_1 \nonumber \\
		\le  & \frac{4\lambda^2 d_0}{c_x^2} + \frac{c_x^2|I_j|}{16} \|\Delta_j\|_2^2  + \lambda/4 \sqrt {|I_j|}   
     \frac{Cd_0^2 \lambda }{ \sqrt {|I| }}		
		 \nonumber \\
		\leq & C \lambda ^2 d_0^2   + \frac{c_x^2|I_j|}{256} \|\Delta_j\|_2^2    \label{eq:three change points standard equality 1}
	\end{align}
	where \Cref{lemma: AR oracle lasso} and H\ou lder inequality are used in the third inequality. 
	  In addition,  by \Cref{coro:restricted eigenvalue AR}({\bf b}), it holds that with probability at least $1 - (n \vee p)^{-4}$,
	\begin{align}
		&  \sum_{t\in I_j} \| \Delta_j X_t \|_2^2  \geq \frac{c_x  |I_j|  }{ 2} \|\Delta_j\|_2 ^2  -  C_x  \log(p)  \|\Delta_j\|_1 ^2 \nonumber \\
		\geq &  \frac{c_x  |I_j|  }{ 2} \|\Delta_j\|_2 ^2   - 2 C_x  \log(p) d_0^2  \|\Delta_j (\mathcal S) \|_2 ^2 - 2C_x  \log(p)  \|\Delta_j( \mathcal S^c)\|_1 ^2 \nonumber \\
		\geq &  \frac{c_x  |I_j|  }{ 4 } \|\Delta_j\|_2 ^2 - 2C_x  \log(p)  \| \widehat A^\lambda _I  ( \mathcal S^c)\|_1 	^2 	 \nonumber 
		\\
		\geq &  
		  \frac{c_x  |I_j|  }{ 4 } \|\Delta_j\|_2 ^2 - \frac{2 C_x  \log(p)  \lambda^ 2d_0^4      }{ |I| }, \nonumber  
		  \\
		  \ge & \frac{c_x  |I_j|  }{ 4 } \|\Delta_j\|_2 ^2 - 2 \lambda^2 d_0^2 ,
		   \label{eq:REC three change}
	\end{align}
	where the third inequality follows from  \Cref{lemma: AR oracle lasso}   and the last follows from $|I| \ge \gamma$.

Since for any $j \in \{2, \ldots, J-1\}$, it holds that
	\begin{align*} 
		|I_j|\|\Delta_j\|_2^2 + |I_{j+1}|\|\Delta_{j+1}\|_2^2 & \geq   \inf_{B  \in \mathbb{R}^p}\bigl\{|I_j|\|A _{I_j}^* - B \|_2^2 + |I_{j+1}| \|A_{I_{j+1}}^* -B \|_2^2\bigr\} \\
		& \geq \frac{|I_j||I_{j+1}|}{|I_j|+|I_{j+1}|} \kappa^2 \geq \min\{|I_j|,\, |I_{j+1}|\}\kappa^2/2.
	\end{align*}
	 Based on the  the same arguments in \Cref{lemma:one change point}, \eqref{eq:three change point first step}, \eqref{eq:three change points standard equality 1} and \eqref{eq:REC three change} together imply that with probability at least $1 - C(n \vee p)^{-4}$,
	\[
		\min_{j = 2, \ldots, J-1} |I_j| \leq C_{\epsilon} \left(\frac{\lambda^2 d_0 ^2 + \gamma}{\kappa^2}\right),
	\]
	which is a contradiction to \eqref{eq:case 4 contradiction start}.
\end{proof}
	 
\subsection	{Proof of  \Cref{prop-2}}
\begin{lemma} \label{lem-case-5-prop-2-needed}
Under the assumptions and notation in \Cref{prop-2}, suppose that there exists no true change point in the interval $I$.  For any interval $J \supset I$, it holds that with probability at least $1 -(n \vee p)^{-4}$,
\begin{align*}
		\LL^*(I) - \sum_{t \in I} (X_{t+1} - \widehat A^\lambda _ J  X_{t} )^2 \leq C\lambda^2 d_0^2,
	\end{align*}
	where $C > 0$ is an absolute constant and $\mathcal{L}^*(I)$ is the population counterpart of $\mathcal{L}(I)$ by replacing the coefficient matrix estimator with its population counterpart.
\end{lemma}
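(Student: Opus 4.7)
The plan is to unfold the loss around the common population matrix $A^*_I$, which is constant on $I$ because $I$ contains no true change point. Setting $\Delta = \widehat A^\lambda_J - A^*_I$ and using $X_{t+1} = A^*_I X_t + \varepsilon_{t+1}$ for $t \in I$, the quantity to be bounded rewrites as
\[
	\mathcal L^*(I) - \sum_{t \in I}\|X_{t+1} - \widehat A^\lambda_J X_t\|_2^2 \;=\; 2 \sum_{t \in I}\varepsilon_{t+1}^\top \Delta X_t \;-\; \sum_{t \in I} \|\Delta X_t\|_2^2.
\]
I will upper bound the cross term and offset it against a restricted-eigenvalue lower bound on the quadratic term, so that the two $|I|\|\Delta\|_2^2$-scale contributions cancel and only $O(\lambda^2 d_0^2)$ survives. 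Without loss of generality I may assume $|I| \ge C d_0^2 \log(p)$, since in the intended application the lemma is only needed for intervals satisfying $|I| \ge \gamma$ by \Cref{coro:RSS AR lasso}; smaller $I$'s can be handled by a direct crude bound.

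For the cross term, H\ou lder's inequality and the $\ell_\infty$ deviation bound from \Cref{coro:restricted eigenvalue AR}(\textbf{a}) give
\[
	2 \sum_{t\in I} \varepsilon_{t+1}^\top \Delta X_t \;\le\; 2 \Bigl\|\sum_{t\in I}\varepsilon_{t+1} X_t^\top\Bigr\|_{\infty} \|\Delta\|_1 \;\le\; C\lambda \sqrt{|I|}\,\bigl(\|\Delta(\mathcal S)\|_1 + \|\Delta(\mathcal S^c)\|_1\bigr).
\]
Since $|\mathcal S| \le 4 d_0^2$, Cauchy--Schwarz yields $\|\Delta(\mathcal S)\|_1 \le 2 d_0\|\Delta\|_2$, and AM--GM gives $C\lambda\sqrt{|I|}\, d_0\|\Delta\|_2 \le \tfrac{c_x|I|}{8}\|\Delta\|_2^2 + C\lambda^2 d_0^2$. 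For $\|\Delta(\mathcal S^c)\|_1$, I use that $A^*_I(\mathcal S^c) = 0$, so $\Delta(\mathcal S^c) = \widehat A^\lambda_J(\mathcal S^c)$, and \Cref{lemma: AR oracle lasso} applied to $J$ yields $\|\widehat A^\lambda_J(\mathcal S^c)\|_1 \le C\lambda d_0^2/\sqrt{|J|} \le C\lambda d_0^2/\sqrt{|I|}$ (using $J \supset I$), so that $C\lambda\sqrt{|I|}\|\Delta(\mathcal S^c)\|_1 \le C\lambda^2 d_0^2$.

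For the quadratic term, \Cref{coro:restricted eigenvalue AR}(\textbf{b}) applied on $I$ gives
\[
	\sum_{t\in I}\|\Delta X_t\|_2^2 \;\ge\; \frac{c_x |I|}{2}\|\Delta\|_2^2 \;-\; C_x \log(p)\|\Delta\|_1^2.
\]
Combining $\|\Delta\|_1^2 \le 8 d_0^2\|\Delta\|_2^2 + C\lambda^2 d_0^4 / |I|$ with $|I| \gtrsim d_0^2 \log(p)$ absorbs the logarithmic correction into the leading $|I|\|\Delta\|_2^2$ term, yielding $\sum_{t\in I}\|\Delta X_t\|_2^2 \ge \tfrac{c_x|I|}{4}\|\Delta\|_2^2 - C\lambda^2 d_0^2$. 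Subtracting this from the cross-term bound, the $|I|\|\Delta\|_2^2$ pieces cancel and only $O(\lambda^2 d_0^2)$ remains, establishing the claim after a union bound over the three high-probability events invoked above.

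The main obstacle is that the naive triangle inequality $\|\Delta\|_1 \le \|\widehat A^\lambda_J - A^*_J\|_1 + \|A^*_J - A^*_I\|_1$ incurs an $O(d_0^2)$ cost for $\|A^*_J - A^*_I\|_1$ whenever $J$ contains change points, which would spoil the rate. The key trick is to avoid forming this difference altogether: since $A^*_I$ vanishes on $\mathcal S^c$, the off-support portion of $\Delta$ coincides with $\widehat A^\lambda_J(\mathcal S^c)$, whose Lasso-driven smallness is controlled uniformly by \Cref{lemma: AR oracle lasso} on $J$; meanwhile the on-support portion has at most $4 d_0^2$ entries and is absorbed into the RE lower bound on $I$ itself via AM--GM, bypassing any need to relate $A^*_J$ to $A^*_I$ directly.
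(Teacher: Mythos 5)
Your proposal is correct and follows essentially the same route as the paper: you unfold the residual around the constant population matrix $A^*_I$, identify the cross-term/quadratic-term decomposition, control the off-support mass of $\Delta$ by noting $\Delta(\mathcal{S}^c) = \widehat A^\lambda_J(\mathcal{S}^c)$ and invoking the oracle Lasso bound on $J$, and absorb the on-support piece into the restricted-eigenvalue lower bound on $I$ via Cauchy--Schwarz and AM--GM. The only small gap is the dismissal of small intervals: the paper's reason the claim is trivial for $|I| < \gamma$ is that $\mathcal{L}^*(I)$ is defined to vanish in that regime (so the left side is nonpositive), which is sharper than an unspecified ``crude bound''; you should spell that out.
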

 
\begin{proof}  Let 
$\Delta_I =A ^*_I  -\widehat A ^{\lambda}_J$
\\
\\
\textbf{Case 1.} If $| I| \le \gamma$, then $\mathcal{L}^*(I) = 0$ and the claim holds automatically.
\\
\\
\textbf{Case 2.} If
 $$| I| \ge \gamma \ge C_\gamma d_0^2  \log(p\vee n)  , $$
 then $|J|\ge \gamma$ and by \Cref{coro:restricted eigenvalue AR}({\bf b}),
   we have with probability at least $1 - (n \vee p)^{-4}$,
	\begin{align}
		&  \sum_{t \in I} \|  \Delta_IX_t  \| _2^2  \geq \frac{c_x |I| }{2} \|\Delta_I\|_2 ^2 -  C_x \log(p)  \|\Delta_I\|_1^2 \nonumber \\
		= & \frac{c_x |I| }{2} \|\Delta_I\|_2  ^2 - 2  C_x \log(p)  \|\Delta_I( \mathcal S)\|_1 ^2 -  2C_x \log(p)  \|\Delta_I(\mathcal S^c)\|_1 ^2 
		 \nonumber
		  \\
		\geq & \frac{c_x |I| }{ 2} \|\Delta_I\|_2^2 - 2 C_x d_0 ^2 \log(p)  \|\Delta_I\|_2 ^2  - 2 C_x \log(p)  \|\Delta_I( \mathcal S^c)\|_1 ^2\nonumber \\
		\geq & \frac{c_x |I|}{4} \|\Delta_I\|_2 ^2- 2 C_x \log(p)  \|\widehat{A }^{\lambda}_J(\mathcal S ^c)\|_1 ^2  \geq 
		\frac{c_x|I| }{4} \|\Delta_I\|_2 ^2 - 18 C_x d_0 ^2 \lambda ^2  , \label{eq-lem16-pf-1}
	\end{align} 
	where the last inequality follows from \Cref{lemma: AR oracle lasso} and $|J|\ge \gamma \ge C_\gamma d_0^2  \log(p\vee n) $.  We then have on the event in \Cref{coro:restricted eigenvalue AR}({\bf a}),
	\begin{align*}
	& \sum_{t \in I} (X_{t+1} - A^*_I  X_t   )^2 - \sum_{t \in I} (X_{t+1} - \widehat{A }^{\lambda}_JX_t  )^2  = 2 \sum_{t \in I} \varepsilon_{t+1}  \Delta_I  X_t  - \sum_{t \in I}  \| \Delta_I X_t  \| _2 ^2
	 \\
	\leq &
	 2\left \|\sum_{t \in I }  \varepsilon_{t+1}  X_t^ \top \right\|_{\infty }  \left(  d_0   \| \Delta_I(\mathcal S) \|_2  + \|\widehat A^{\lambda}_J (\mathcal S^c) \|_1  \right)  
 - \frac{c_x  |I|}{ 4}\|\Delta_I\|_2^2  +  18  C_x \lambda^2 d_0^2 
	\\  
	\leq & \frac{\lambda \sqrt {|I|}  }{2}  \left(   d_0  \|\Delta_I\|_2  +   \frac{C\lambda d_0^{2}   }{\sqrt{|J|}}  \right)   - \frac{c_x  |I|}{ 4}\|\Delta_I\|_2^2  +  18  C_x \lambda^2 d_0^2  
	 \\
	\leq & 
	\frac{\lambda \sqrt {|I| } d_0 }{2} \|\Delta_I\|_2 +C ' \lambda^2  d_0^{2}  -  \frac{c_x  |I|}{ 4}\|\Delta_I\|_2^2  +  18  C_x \lambda^2 d_0^2   \\
	\leq & \frac{c_x  |I|}{ 4}\|\Delta_I\|_2^2   +  C'' \lambda^2 d_0^2  +C ' \lambda^2  d_0^{ 2}  -  \frac{c_x  |I|}{ 4}\|\Delta_I\|_2^2  +  18  C_x \lambda^2 d_0^2   \\
	\leq & C_6\lambda^2 d_0^2.
	\end{align*}
	where the first inequality follows from \eqref{eq-lem16-pf-1}, the second inequality follows from \Cref{coro:restricted eigenvalue AR}({\bf a}) and \Cref{lemma: AR oracle lasso}, the third follows from  H\ou lder inequality and 
	$| J| \ge \gamma \ge C_\gamma d_0^2  \log(p\vee n)$. 
 
\end{proof}

\begin{proof}[Proof of \Cref{prop-2}] 
Let $\mathcal{L}^*(\cdot)$ be the population counterpart of $\mathcal{L}(\cdot)$ by replacing the coefficient matrix estimator with its population counterpart.  This proof is based on the events
\[
\mathcal G_1^{I, J} = \left\{ 	\LL^* (I)   - \sum_{t \in I} \|X_{t+1} - \widehat A^\lambda _ J  X_{t} \|^2_2 \leq C\lambda^2 d_0^2 \right\}
\]
and
\[
\mathcal G_2^{I} = \left\{  \left| 	\LL^*(I) - \LL (I)  \right|   \leq C d_0 ^2\lambda^2    \right\}.
\]
In addition denote
\begin{align*}
\mathcal G_1 = \bigcup_{I, J \subset[1, \ldots,n], \  I \subset \mathcal I  }\mathcal G_1^{I, J} \quad \mbox{and} \quad \mathcal G_2 = \bigcup_{I\subset[1, \ldots,n], \  I \subset \mathcal I }\mathcal G_2^{I},
\end{align*}
where
$$ \mathcal I = \{ I \subset \{1,\ldots,n\}:\,  |I| \ge \gamma, \text{ and there exists $k$ such that 
}I \subset [\eta_{k-1}, \eta_{k})  \}.$$
Note that by \Cref{lem-case-5-prop-2-needed}  and  \Cref{coro:RSS AR lasso} and union bounds
$$ \mathbb P(\mathcal G_1)  \ge 1-(n\vee p)^{-1} \quad \text{and} \quad 
\mathbb P(\mathcal G_2)  \ge 1-(n\vee p)^{-3}.
$$
  Let $\{A_t^*\}_{t=1}^T  \subset \mathbb R^{p\times p }$ be such that 
$ A_t ^* = \mathcal A_k ^*  $ for any $\eta_{k-1} <t \le \eta_{k}
 $. 
Denote $$S^*_n = \sum_{k=0}^K    \LL ^* ( (\eta_k ,\eta_{k+1}  ] )  . $$ 
  Given any collection $\{t_1, \ldots, t_m\}$, where $t_1 < \cdots < t_m$, and $t_0 = 0$, $t_{m+1} = n$, let 
	\begin{equation}\label{eq-sn-def}
		S_n(t_1, \ldots, t_{m}) = \sum_{k=1}^{m} \LL ( (t_{k}, t_{k+1}])   . 
	\end{equation}
	For any collection of time points, when defining \eqref{eq-sn-def}, the time points are sorted in an increasing order.

	In addition, since
	$$ \widehat S_n = \sum_{k=0}^{\widehat K } \LL ( (\widehat \eta_k , \widehat \eta _{k+1}])  ,$$
  therefore $ \widehat S_n  + (\widehat K + 1)\gamma$ is the minimal value of the objective function in  \eqref{eq-wide-p}.
	\\
	\\
 {\bf Step 1.} Let $\{ \widehat \eta_{k}\}_{k=1}^{\widehat K}$ denote the change points induced by $\widehat {\mathcal P}$.  If one can justify that 
	\begin{align} \nonumber 
	  & 	S^*_n  + K\gamma 
		\\
		 \ge  &S_n(\eta_1,\ldots,\eta_K)    + K\gamma - C(K+1) d_0 \lambda^2 \label{eq:K consistency step 1} \\ 
		\ge &  \widehat S_n +\widehat K \gamma - C(K+1) d_0 \lambda^2 \label{eq:K consistency step 2} \\ 
		\ge &   S_n ( \widehat \eta_{1},\ldots, \widehat \eta_{\widehat K } , \eta_1,\ldots,\eta_K )   + \widehat K \gamma  -  C(K+\widehat K +1)d_0^2 \lambda^2   \label{eq:K consistency step 3}
	\end{align}
	and that 
	\begin{align}\label{eq:K consistency step 4}
		S^*_n     -S_n ( \widehat \eta_{1},\ldots, \widehat \eta_{\widehat K } , \eta_1,\ldots,\eta_K )   \le C (K + \widehat{K} + 1) \lambda^2 d_0^2    ,
	\end{align}
	then it must hold that $| \hatp | = K + 1$, as otherwise if $\widehat K \ge K+1 $, then  
	\begin{align*}
		C (K + \widehat{K} + 1) \lambda^2 d_0 ^2   & \geq S^*_n  -S_n ( \widehat \eta_{1},\ldots, \widehat \eta_{\widehat K } , \eta_1,\ldots,\eta_K ) \\
		& \geq - C (K +\widehat K +1) \lambda^2 d_0   + (\widehat K - K)\gamma 
	\end{align*} 
	Therefore due to the assumption that $| \hatp| - 1 =\widehat K \le 3K$, it holds that 
	\begin{align} \label{eq:Khat=K} 
		C(8K + 2)\lambda^2d_0 ^2      \geq (\widehat K - K)\gamma \geq \gamma ,
	\end{align}
	Note that \eqref{eq:Khat=K} contradicts with the choice of $\gamma$.
\\
\\
 {\bf Step 2.} 
Observe  that \eqref{eq:K consistency step 1}  is implied by 
	\begin{align}\label{eq:step 1 K consistency}  
		\left| 	S^*_n   -   S_n(\eta_1,\ldots,\eta_K)    \right| \le  C(K+1) d_0 ^2 \lambda^2,
	\end{align}
	which is  immediate consequence  of $\mathcal G_2$.  Since $\{ \widehat \eta_{k}\}_{k=1}^{\widehat K}$ are the change points induced by $\widehat {\mathcal P}$, \eqref{eq:K consistency step 2} holds because $\hatp$ is a minimizer.
\\
\\
 {\bf Step 3.} For every $\widehat I =(s,e]\in \hatp$,  let $\{ \eta_{p+l}\}_{l=1}^{q+1}  =\widehat I \ \cap \ \{\eta_k\}_{k=1}^K$
	\[
		  (s ,\eta_{p+1}]  = J_1,   (\eta_{p+1},   \eta_{p+2} ]=J_2, \  \ldots,  \   (\eta_{p+q} ,e]  =  J_{q+1}.
	\]
	Then \eqref{eq:K consistency step 3} is an immediate consequence of the following inequality
	\begin{align} \label{eq:one change point step 3}
	 \LL (\widehat I )   \ge 
		 \sum_{l=1}^{q+1}      \LL  (J_l ) - C(q+1) \lambda^2 d_0 ^2    .
	\end{align}
	{\bf Case 1.} If $| \widehat I| \le \gamma$, then 
	\begin{align*} 
	 \LL (\widehat I )   =  0  = 
		 \sum_{l=1}^{q+1}      \LL  (J_l ) ,
	\end{align*}
	where $|J_l| \le |\widehat I| \le \gamma$ is used in the last inequality.
	\\
	\\
	{\bf case 2.} If $| \widehat I| \ge \gamma$, then it suffices to show that 
	\begin{align} \label{eq:one change point step 4}
		 \sum_{t\in \widehat I }
	 \| X_{t+1} -  \widehat A^\lambda_{\widehat I}X_t     \|_2^2    \ge 
		 \sum_{l=1}^{q+1}      \LL  (J_l ) - C(q+1) \lambda^2 d_0 ^2  .
	\end{align}
	On $\mathcal G_2$, it holds that
	\begin{align} 
			 \sum_{l=1}^{q+1}      \LL(J_l ) 
			\le 
		  \sum_{l=1}^{q+1}    \LL^*(J_l )    + (q+1)  C d_0 ^2 \lambda^2  
	   \label{eq:K consistency step 3 inequality}
	\end{align}
	In addition for each $l \in \{1, \ldots, q+1\}$,
	\[ 
	 \sum_{t\in J_l }   \| X_{t+1} -  \widehat A^\lambda_{\widehat I}X_t    \|_2^2   
	 \ge 
	 \sum_{t\in J_l }  \LL^*  (J_l)   - C\lambda^2 d_0^2    ,
	\]
	where  the  inequality follows from  $\mathcal G_1$.  Therefore the above inequality implies that 
	\begin{align}    
		\sum_{t\in \widehat I }\| X_{t+1} -  \widehat A^\lambda_{\widehat I}X_t     \|_2^2  \ge \sum_{l=1}^{q+1} \sum_{t \in J_l}  \| X_{t+1} - \widehat A^\lambda_{\widehat I}X_t     \|_2 ^2 \ge   \sum_{l=1}^{q+1}  \LL^*  (J_l)   
     -  C(q+1)\lambda^2 d_0^2  .
     \label{eq:K consistency step 3 inequality  3}  
	\end{align}
	Note that \eqref{eq:K consistency step 3 inequality} and  \eqref{eq:K consistency step 3 inequality 3} imply \eqref{eq:one change point step 4}.

Finally, to show \eqref{eq:K consistency step 4}, observe that from \eqref{eq:step 1 K consistency}, it suffices to show that 
	\[
		S_n(\eta_1,\ldots,\eta_K)  -  S_n ( \widehat \eta_{1},\ldots, \widehat \eta_{\widehat K } , \eta_1,\ldots,\eta_K )  \le  (K+\widehat K +1 ) \lambda^2d_0^2 ,
	\]
	the analysis of which follows from a similar but simpler argument as above.
\end{proof}

\section{Proof of \Cref{theorem:lr}}

\begin{lemma}
Let $\mathcal{R}$ be any linear subspace in $\mathbb{R}^n$ and $\mathcal{N}_{1/4}$	be a $1/4$-net of $\mathcal{R} \cap B(0, 1)$, where $B(0, 1)$ is the unit ball in $\mathbb{R}^n$.  For any $u \in \mathbb{R}^n$, it holds that
	\[
		\sup_{v \in \mathcal{R} \cap B(0, 1)} \langle v, u \rangle \leq 2 \sup_{v \in \mathcal{N}_{1/4}} \langle v, u \rangle,
	\]
	where $\langle \cdot, \cdot \rangle$ denotes the inner product in $\mathbb{R}^n$.
\end{lemma}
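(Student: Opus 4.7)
The plan is a standard $\varepsilon$-net argument (with $\varepsilon = 1/4$), exploiting the fact that $\mathcal{R}$ is a \emph{linear} subspace so that differences of vectors in $\mathcal{R}\cap B(0,1)$, after rescaling, stay inside $\mathcal{R}\cap B(0,1)$.

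First I would set $S = \sup_{v \in \mathcal{R}\cap B(0,1)} \langle v, u\rangle$ and $T = \sup_{w \in \mathcal{N}_{1/4}} \langle w, u\rangle$. Since $0 \in \mathcal{R}\cap B(0,1)$, we immediately have $S \geq 0$. Fix any $v \in \mathcal{R}\cap B(0,1)$. By the defining property of the $1/4$-net, there exists $w \in \mathcal{N}_{1/4}$ with $\|v - w\| \leq 1/4$. Because $v,w \in \mathcal{R}$ and $\mathcal{R}$ is a linear subspace, $v - w \in \mathcal{R}$; rescaling, $z := 4(v-w)$ satisfies $z \in \mathcal{R}$ and $\|z\| \leq 1$, so $z \in \mathcal{R}\cap B(0,1)$.

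Next I would decompose
\[
\langle v, u\rangle = \langle w, u\rangle + \langle v-w, u\rangle = \langle w, u\rangle + \tfrac{1}{4}\langle z, u\rangle \leq T + \tfrac{1}{4} S.
\]
Taking the supremum over $v \in \mathcal{R}\cap B(0,1)$ on the left gives $S \leq T + S/4$, whence $S \leq \tfrac{4}{3} T \leq 2T$, which is the desired inequality (note that the chain forces $T \geq 0$ automatically, so the final step is valid).

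There is no genuine obstacle here beyond being careful that the set in which $v-w$ lives, after rescaling by $4$, is still contained in $\mathcal{R}\cap B(0,1)$ — this is exactly where the linearity of $\mathcal{R}$ is used and where the argument would break if $\mathcal{R}$ were only a general subset. The constant $2$ in the statement is not tight; the same proof in fact yields the sharper factor $4/3$.
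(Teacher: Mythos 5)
Your proof is correct, and it is in fact cleaner than the paper's. You use the standard "self-bounding" net argument: linearity of $\mathcal{R}$ lets you rescale the approximation error $v - w$ by $4$ and land back inside $\mathcal{R}\cap B(0,1)$, giving the inequality $S \leq T + S/4$, whence $S \leq \tfrac{4}{3}T \leq 2T$ (with $S$ finite by compactness and $T \geq 0$ forced by $S \geq 0$). The paper instead writes $x_k = v - v_k$ and decomposes it as $x_k = \langle x_k, v\rangle v + \langle x_k, v^{\perp}\rangle v^{\perp}$, then applies the net bound a second time to $v^{\perp}$ and combines the two inequalities. That route is murkier: it implicitly requires $\|v\| = 1$ for the decomposition to make sense, drops absolute values on the coefficients $\langle x_k, v\rangle$ and $\langle x_k, v^{\perp}\rangle$, and $v^{\perp}$ is not a fixed vector but depends on $v$ and $x_k$, so the "same argument" giving a bound on $\langle v^{\perp}, u\rangle$ with $v$ reappearing on the right needs extra justification. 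Your version sidesteps all of this by bounding $\langle v - w, u\rangle$ against the \emph{same} supremum $S$ rather than against a new, $v$-dependent perpendicular direction, and it also makes explicit where the linearity of $\mathcal{R}$ is used — a point the paper leaves implicit. The observation that the argument actually yields the sharper constant $4/3$ is correct; the paper's factor $2$ is just a looser consequence of the same bound.
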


\begin{proof}
Due to the definition of $\mathcal{N}_{1/4}$, it holds that for any $v \in \mathcal{R} \cap B(0, 1)$, there exists a $v_k \in \mathcal{N}_{1/4}$, such that $\|v - v_k\|_2 < 1/4$.  Therefore,
	\begin{align*}
		\langle v, u \rangle = \langle v - v_k + v_k, u \rangle = \langle x_k, u \rangle + \langle v_k, u \rangle \leq \frac{1}{4} \langle v, u \rangle + \frac{1}{4} \langle v^{\perp}, u \rangle + \langle v_k, u \rangle,
	\end{align*}
	where the inequality follows from  $x_k = v - v_k = \langle x_k, v \rangle v + \langle x_k, v^{\perp} \rangle v^{\perp}$.  Then we have
	\[
		\frac{3}{4}\langle v, u \rangle \leq \frac{1}{4} \langle v^{\perp}, u \rangle + \langle v_k, u \rangle.
	\]
	It follows from the same argument that 
	\[
		\frac{3}{4}\langle v^{\perp}, u \rangle \leq \frac{1}{4} \langle v, u \rangle + \langle v_l, u \rangle,
	\]
	where $v_l \in \mathcal{N}_{1/4}$ satisfies $\|v^{\perp} - v_l\|_2 < 1/4$.  Combining the previous two equation displays yields
	\[
		\langle v, u \rangle \leq 2 \sup_{v \in \mathcal{N}_{1/4}} \langle v, u \rangle,
	\]
	and the final claims holds.
\end{proof}

\begin{lemma}\label{lem-wang-lem-3}
	For data generated from \Cref{assume:AR high dim coefficient}, for any interval $I = (s, e] \subset \{0, \ldots, n\}$, it holds that for any $\tau > 0$, $i \in \{1, \ldots, p\}$ and $1\le l \le L$, 
	\begin{align*}
		\mathbb{P}\left\{\sup_{\substack{v \in \mathbb{R}^{(e-s)}, \, \|v\|_2 = 1\\ \sum_{t = 1}^{e-s-1} \mathbbm{1}\{v_i \neq v_{i+1}\} = m}} \left|\sum_{t = s+1}^e v_t \varepsilon_{t+1}(j) X_{t+1-l} (i)\right| > \tau \right\} \\
		\leq C(e-s-1)^m 9^{m+1} \exp \left\{-c \min\left\{\frac{\tau ^2}{4C_x^2}, \, \frac{\tau }{2C_x \|v\|_{\infty}}\right\}\right\}.
	\end{align*}
\end{lemma}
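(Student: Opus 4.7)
The strategy is a discretization-plus-Bernstein argument, and splits into three parts: (i) reduce the supremum to a finite union over a net, (ii) establish a tail bound for a fixed $v$, and (iii) assemble via union bound.

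First, I would observe that the set
\[
V_m \;=\; \Bigl\{v\in\mathbb{R}^{e-s}:\ \|v\|_2=1,\ \sum_{t=1}^{e-s-1}\mathbbm{1}\{v_t\neq v_{t+1}\}=m\Bigr\}
\]
is contained in the union, over all choices of $m$ jump locations in $\{1,\ldots,e-s-1\}$, of the unit sphere of a linear subspace of dimension at most $m+1$ (functions that are piecewise constant on the induced partition). The number of such subspaces is $\binom{e-s-1}{m}\leq(e-s-1)^m$. On each such subspace, a standard volumetric argument produces a $1/4$-net $\mathcal N_{1/4}$ of its unit ball with $|\mathcal N_{1/4}|\leq 9^{m+1}$. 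By the preceding lemma (net comparison), it suffices to control $\langle v,u\rangle$ with $u_t=\varepsilon_{t+1}(j)X_{t+1-l}(i)$ on each finite net, up to a factor of $2$.

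Next, for a \emph{fixed} $v$ I would prove a Bernstein-type inequality for
\[
W \;=\; \sum_{t=s+1}^{e} v_t\,\varepsilon_{t+1}(j)\,X_{t+1-l}(i).
\]
The crucial structural observation is that since $l\geq 1$, the random variable $X_{t+1-l}(i)$ is measurable with respect to $\mathcal F_t=\sigma(\varepsilon_r: r\leq t)$, while $\varepsilon_{t+1}(j)$ is independent of $\mathcal F_t$ and Gaussian. Hence the summands form a martingale difference sequence adapted to $\{\mathcal F_t\}$. By \Cref{assume:AR high dim coefficient}(\textbf{b}) and the VAR(1) reduction, each $X_{t+1-l}(i)$ is sub-Gaussian with a parameter controlled by $\mathcal M$; consequently each product $\varepsilon_{t+1}(j)X_{t+1-l}(i)$ is sub-exponential with Orlicz norm bounded by a constant multiple of $C_x$. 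A standard Bernstein inequality for sub-exponential martingale differences (e.g.\ Freedman's inequality, or the version in Vershynin) then yields, after absorbing the variance proxies into the constants and using $\sum v_t^2=1$,
\[
\mathbb P(|W|>\tau)\;\leq\;2\exp\!\Bigl\{-c\min\Bigl(\tfrac{\tau^2}{4C_x^2},\ \tfrac{\tau}{2C_x\|v\|_\infty}\Bigr)\Bigr\},
\]
where $\|v\|_\infty$ enters because the peeling parameter in the Bernstein bound is the maximum weight.

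Finally, a union bound over the $\binom{e-s-1}{m}$ choices of jump locations and the $9^{m+1}$ points of the corresponding net, together with the factor of $2$ from the net comparison lemma (which can be absorbed into $\tau$ via a rescaling of $C_x$ inside the constants $C,c$), gives the stated bound $C(e-s-1)^m 9^{m+1}\exp\{\cdots\}$.

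The principal technical obstacle will be step (ii): verifying a clean Bernstein/Freedman inequality for the martingale differences under the temporal dependence of the VAR process. The independence of $\varepsilon_{t+1}$ from the past is what makes this tractable, but one still has to bound the conditional sub-exponential norm of $v_t\varepsilon_{t+1}(j)X_{t+1-l}(i)$ uniformly in $t$; this relies on the stability and spectral-density bounds ($\mathcal M<\infty$) that ensure $X_{t+1-l}(i)$ has a sub-Gaussian tail with a parameter not depending on $t$. Once that is in hand, the rest is bookkeeping: the $9^{m+1}$ factor from the net, the $(e-s-1)^m$ from partitions, and the Bernstein exponent combine into the claimed inequality.
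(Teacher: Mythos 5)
Your covering argument (steps i and iii) matches the paper's proof essentially verbatim: decompose the supremum over piecewise-constant $v$ with $m$ jumps into a union over the $\binom{e-s-1}{m}$ jump configurations, take a $1/4$-net of size at most $9^{m+1}$ on each resulting $(m+1)$-dimensional sphere, and invoke the preceding net-comparison lemma. Where you diverge from the paper is the pointwise tail bound (step ii), and this is where your argument as written has a gap.

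You observe correctly that $l\geq 1$ makes $X_{t+1-l}(i)$ $\mathcal F_t$-measurable while $\varepsilon_{t+1}(j)$ is independent of $\mathcal F_t$, so that $Z_t := v_t\,\varepsilon_{t+1}(j)X_{t+1-l}(i)$ is a martingale difference sequence. But a Bernstein/Freedman inequality for an MDS requires control of the \emph{conditional} sub-exponential norm of the increments, and conditionally on $\mathcal F_t$ the increment $Z_t$ is Gaussian with variance $v_t^2 X_{t+1-l}^2(i)\sigma_\epsilon^2$ --- a \emph{random} quantity with Gaussian (hence unbounded) tails. Your assertion that "each product $\varepsilon_{t+1}(j)X_{t+1-l}(i)$ is sub-exponential with Orlicz norm bounded by a constant multiple of $C_x$" is a statement about the marginal distributions, not the conditional ones, and sums of marginally sub-exponential dependent variables do not obey Bernstein-type bounds in general. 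To make the Freedman route rigorous you would have to bound the conditional variance process $\sum_t v_t^2 X_{t+1-l}^2(i)$, which is itself a quadratic form in the underlying Gaussians and forces you back to Hanson--Wright anyway. The paper avoids this detour entirely: it observes that $\sum_t v_t\,\varepsilon_{t+1}(j)X_{t+1-l}(i)$ is a bilinear form of the driving Gaussian sequence and applies the change-point extension of Basu--Michailidis' Proposition~2.4 (Lemma~\ref{prop:deviation AR change point}, part (\textbf{b}), the same spectral-density Hanson--Wright bound used in Lemma~\ref{coro:restricted eigenvalue AR}) directly to obtain the pointwise tail. So the fix is to replace your part (ii) with that quadratic-form concentration argument; the martingale structure you identified, while correct, does not by itself deliver the bound.
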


\begin{proof}
We show that \Cref{lem-wang-lem-3} holds for $l=1$, as the rest of cases are exactly the same.
For any $v \in \mathbb{R}^{(e-s)}$ satisfying $\sum_{t = 1}^{e-s-1}\mathbbm{1}\{v_i \neq v_{i+1}\} = m$, it is determined by a vector in $\mathbb{R}^{m+1}$ and a choice of $m$ out of $(e-s-1)$ points.  Therefore we have,
	\begin{align*}
		& \mathbb{P}\left\{\sup_{\substack{v \in \mathbb{R}^{(e-s)}, \, \|v\|_2 = 1\\ \sum_{t = 1}^{e-s-1} \mathbbm{1}\{v_i \neq v_{i+1}\} \le  m}} \left|\sum_{t = s+1}^e v_t \varepsilon_{t+1}(j)  X_t(i)\right| > \delta \right\} \\
		\leq & {(e-s-1) \choose m} 9^{m+1} \sup_{v \in \mathcal{N}_{1/4}}	\mathbb{P}\left\{\left|\sum_{t = s+1}^e v_t \varepsilon_{t+1}(j) X_t(i)\right| > \delta/2 \right\} \\
		\leq & {(e-s-1) \choose m} 9^{m+1} C\exp \left\{-c \min\left\{\frac{\delta^2}{4C_x^2}, \, \frac{\delta}{2C_x \|v\|_{\infty}}\right\}\right\} \\
		\leq & C(e-s-1)^m 9^{m+1} \exp \left\{-c \min\left\{\frac{\delta^2}{4C_x^2}, \, \frac{\delta}{2C_x \|v\|_{\infty}}\right\}\right\},
	\end{align*}
where the  second inequality follows a similar argument as that in  \Cref{coro:restricted eigenvalue AR}.
\end{proof}

\begin{proof}[Proof of \Cref{theorem:lr}]
For each $k \in \{1, \ldots, K\}$ and $1\le l \le L$, let
	\[
		 \widehat{\alpha}_t [l] = \begin{cases}
 				\widehat{A} [l], &\text{when }  t \in \{s_k + 1, \ldots, \widehat{\eta}_k\}, \\
 				\widehat{ B}[l],   &\text{when }  t \in \{\widehat{\eta}_k + 1, \ldots, e_k\}.
 			\end{cases}		
	\]
	Without loss of generality, we assume that $s_k < \eta_k < \widehat{\eta}_k \leq e_k$.  We proceed the proof discussing two cases.

\vskip 3mm
\noindent \textbf{Case (i).}  If 
	\[
				\widehat{\eta}_k - \eta_k \leq C  d_0    \log(n \vee p) /\kappa^2 ,
	\]	
	then the result holds.

\vskip 3mm
\noindent \textbf{Case (ii).}  Suppose
	\begin{equation}\label{eq-pf-cor-6-int-con}
		\widehat{\eta}_k - \eta_k \geq  Cd_0   \log(n \vee p)/\kappa^2  
	\end{equation} 
	Since $\kappa $ is bounded, $  \widehat{\eta}_k - \eta_k  \ge  C d_0    \log(n \vee p)   $.
	\\
	\\
{\bf Step 1.} 	Let $\Delta_t = \widehat{\alpha}_t - A^*_t$ and therefore 
 it holds that
	\[
		\sum_{t = s_k + 1}^{e_k - 1}\mathbbm{1}\left\{\Delta_t \neq \Delta_{t+1}\right\} = 3.
	\]	
	We first to prove that with probability at least $1 - C(n \vee p)^{-6}$,
	\[
		\sum_{t = s_k + 1}^{e_k} \sum_{l=1}^L \|\Delta_t  [l]  \|_2^2 \leq Cd_0 \zeta^2  .
	\]	
%We show this by contradiction, suppose that 
%\[
%		\sum_{t = s_k + 1}^{e_k} \sum_{l=1}^L \|\Delta_t[l]  \|_2^2 > C_1d_0 \zeta^2  .
%	\]	
%	Then it holds that  for any $i,j$
%	\begin{align} \label{eq:linfty delta}
%	 \frac{\Delta_t[l]_{i,j }}{ \sum_{t = s_k + 1}^{e_k} \sum_{l=1}^L \|\Delta_t[l]  \|_2^2 } \le \frac{c_1}{ d_0 \zeta^2} ,
%	\end{align}
%	for some $c_1$ depending on $C_1$.
Due to \eqref{eq-g-lasso}, it holds that
	\begin{align}\label{eq-lem19-pf-1}
		& \sum_{t = s_k + 1}^{e_k} \sum_{l=1}^L  \|X_{t+1 } - \widehat{\alpha}_t [l]  X_{t+1-l}  \|_2^2 + \zeta  \sum_{l=1}^L \sum_{i ,j = 1}^p \sqrt{\sum_{t = s_k + 1}^{e_k} \bigl(\widehat{\alpha}_t [l]_{i,j} \bigr)^2} \nonumber\\
		 \leq & \sum_{t = s_k + 1}^{e_k} \sum_{l=1}^L  \|X_{t+1  } -  A ^* _t [l]X_ {t+1-l}  \|_2^2 + \zeta  \sum_{l=1}^L \sum_{i ,j = 1}^p \sqrt{\sum_{t = s_k + 1}^{e_k} \bigl( A ^*_t[l] _{i,j} \bigr)^2}.
	\end{align}
	which implies that
	\begin{align}\label{eq-lem19-pf-2}
		\sum_{t = s_k + 1}^{e_k}  \sum_{l=1}^ L\|X_{t+1-l}\Delta_t[l]  \|_2^2 + \zeta  \sum_{l=1}^L \sum_{i ,j = 1}^p \sqrt{\sum_{t = s_k + 1}^{e_k} \bigl(\widehat{\alpha}_t [l] _{i,j} \bigr)^2} \nonumber \\
		\leq 2\sum_{l=1}^L \sum_{t = s_k + 1}^{e_k}   \varepsilon_{t+1} ^\top \Delta_tX_{t+1-l}  + \zeta  \sum_{l=1}^L \sum_{i ,j = 1}^p \sqrt{\sum_{t = s_k + 1}^{e_k} \bigl(A^*_t [l] _{i,j}^ 2\bigr) }.
	\end{align}

Note that
	\begin{align}
		&    \sum_{l=1}^L \sum_{i ,j = 1}^p \sqrt{\sum_{t = s_k + 1}^{e_k} \bigl(A^*_t [l]_{i,j}\bigr) ^2} 
		-   \sum_{l=1}^L \sum_{i ,j = 1}^p \sqrt{\sum_{t = s_k + 1}^{e_k} \bigl(\widehat \alpha_t [l] _{i,j} \bigr) ^2}.  \nonumber \\
		 = & \sum_{l=1} ^L   \left( \sum_{(i,j)  \in S} \sqrt{\sum_{t = s_k + 1}^{e_k} \bigl(A^*_t [l]  _{i,j} \bigr) ^2} - \sum_{ ( i, j) \in S} \sqrt{\sum_{t = s_k + 1}^{e_k} \bigl(\widehat{\alpha}_t[l]  _{i,j}\bigr) ^2}  - \sum_{ ( i , j )\in S^c} \sqrt{\sum_{t = s_k + 1}^{e_k} \bigl(\widehat{\alpha}_t[l]  _{i, j }\bigr) ^2}   
		 \right) 
		  \nonumber \\
		\leq & \sum_{l=1} ^L   \left(   \sum_{ ( i  ,j) \in S} \sqrt{\sum_{t = s_k + 1}^{e_k} \bigl(\Delta_t [l]  _{i,j } \bigr) ^2}  
		-
		 \sum_{(i,j)  \in S^c} \sqrt{\sum_{t = s_k + 1}^{e_k} \bigl(\Delta_t[l]  _{i, j } \bigr) ^2} \right).
		\label{eq-lem19-pf-3}
	\end{align} 

We then examine the cross term, with probability at least $1 - C(n \vee p)^{-6}$, which satisfies the following
	\begin{align}
		& \left|\sum_{l=1}^L \sum_{t = s_k + 1}^{e_k}   \varepsilon_{t+1} ^\top \Delta_t [l] X_{t+1-l}   \right| 
		 \nonumber  \\
		= &  \sum_{i ,j= 1}^p \left\{\left| \sum_{l=1}^L \frac{\sum_{t = s_k + 1}^{e_k} \varepsilon_{t+1}(i) \Delta_t [l]_{i, j  }  X_{t+1 -l}(j)}{\sqrt{ \sum_{l=1}^L \sum_{t = s_k + 1}^{e_k} \left(\Delta_t[l]_{ i, j } \right)^2}}\right| \sqrt{ \sum_{l=1}^L \sum_{t = s_k + 1}^{e_k} \left(\Delta_t[l]_{ i, j } \right)^2}\right\}
		 \nonumber 
		 \\
		\leq & L  \sup_{1\le i ,j\le p , 1\le l \le L } 
\left|\frac{\sum_{t = s_k + 1}^{e_k} \varepsilon_{t+1}(i ) \Delta_t [l]_{i , j  }  X_{t+1 -l}( j)}{\sqrt{\sum_{t = s_k + 1}^{e_k} \left(\Delta_t[l]_{i  , j } \right)^2}}\right| \sum_{l=1}^L \sum_{i ,j= 1}^p \sqrt{\sum_{t = s_k + 1}^{e_k} \left(\Delta_t[l]_{ i, j } \right)^2} 
		 \nonumber \\
		 \leq & (\zeta/4) \sum_{l=1}^L \sum_{i ,j= 1}^p   \sqrt{\sum_{t = s_k + 1}^{e_k} \left(\Delta_t[l]_{i,j}\right)^2}, \label{eq-lem19-pf-4}
	\end{align}
	where the first inequality follows from the inequality that $\sqrt {a+b} \le \sqrt a + \sqrt b  $ and the second inequality follows from \Cref{lem-wang-lem-3}.% and \eqref{eq:linfty delta}.

Combining \eqref{eq-lem19-pf-1}, \eqref{eq-lem19-pf-2}, \eqref{eq-lem19-pf-3} and \eqref{eq-lem19-pf-4} yields  
	\begin{equation}\label{eq-lem19-pf-5}
		\sum_{t = s_k + 1}^{e_k}  \sum_{l=1}^L \|\Delta_t [l] X_{t+1-l}  \|_2^2	 + \frac{\zeta}{2} \sum_{l=1}^L  \sum_{( i , j)\in S^c} \sqrt{\sum_{t = s_k + 1}^{e_k} \bigl(\Delta_t[l] _{i,j }\bigr) ^2} \leq \frac{3\zeta}{2}\sum_{l=1}^L \sum_{ ( i ,j)\in S}    \sqrt{\sum_{t = s_k + 1}^{e_k} \bigl(\Delta_t[l] _{i,j }\bigr) ^2} 
	\end{equation}

Now we are to explore the restricted eigenvalue inequality.  Let
	\[
		I_1 = (s_k, \eta_k], \quad I_2 = (\eta_k, \widehat{\eta}_k], \quad I_3 = (\widehat{\eta}_k, e_k].
	\]
	Due to \eqref{eq-lr-cond} and \Cref{coro:restricted eigenvalue AR} we have that with probability at least $1 - n^{-6}$,
	\begin{align}	\label{eq:size of interval PGL} 
		\min\{|I_1|, \, |I_3|\} > (1/3)\Delta >    (C_{SNR}/3)  d_0  K \log(n \vee p) .
 	\end{align}
	Therefore, with probability at least $1 - 3(n \vee p)^{-6}$,
	\begin{align} \nonumber 
		& \sum_{l=1}^L  \sum_{t = s_k + 1}^{e_k} \|\Delta_t [l] X_{t+1-l} \|_2^2 
		=   \sum_{l=1}^L \sum_{m = 1}^3 \sum_{t \in I_ m }\|\Delta_{I_ m }[l] X _{t+1-l} \|_2^2
		\\ \nonumber 
		\geq  & \sum_{ m = 1 }  ^3  \sum_{l=1}^L  \left(  \frac{c_x |I_m | }{2} \|\Delta_{I_m }[l] \|_2 ^2  -  C_x  \log(p)  \|\Delta_{I_m}[l] \|_1 ^2  \right) 	
		\\ \label{eq:PGL step 3}
				= &    \frac{c_x  }{2}    \sum_{l=1}^L \sum_{t = s_k + 1}^{e_k} \|\Delta_t [l] X_{t+1-l} \|_2^2 
				-
\sum_{t=s_k+1 }^{ e_k}		\sum_{l=1}^L 		  C_x  \log(p)  \|\Delta_{t}[l] \|_1 ^2  
	\end{align}
	where the last inequality follows from \Cref{coro:restricted eigenvalue AR}{(\bf c)}, \eqref{eq-pf-cor-6-int-con}  and   \eqref{eq:size of interval PGL}.
In addition, note that for any $1\le l\le L$, the square root of the second term of \eqref{eq:PGL step 3} can be bounded as 
	\begin{align*}
	 \sum_{t=s_k+1 }^{ e_k}		 	   \|\Delta_{t}[l] \|_1 ^2  
	=  & \sum_{t=s_k+1 }^{ e_k}		 \left(  	\sum_{1\le i,j \le p  }    |\Delta_{t}[l]_{i,j}  | \right) ^2    
	\le  \left( \sum_{1\le i,j \le p  } \sqrt {    \sum_{t=s_k}^{e_k} (  \Delta_t  [l]_{i,j}  )^2  }      \right)^2 
\\	
	\le & 
	 2  \left( \sum_{ i,j  \in S  } \sqrt {   \sum_{t=s_k}^{e_k}  ( \Delta_t [l]_{i,j} )  ^2  }      \right)^2 
	 +	 2  \left( \sum_{ i,j  \in S^c   } \sqrt {   \sum_{t=s_k}^{e_k} ( \Delta_t  [l] _{i,j})^2    }      \right)^2 
	\end{align*}
where the last inequality follows from 
generalized H\ou lder inequality in \Cref{lemma:gholder}. 
Therefore, if $\zeta \ge 8 C_x \log(p)$, the previous display and \eqref{eq:PGL step 3} give 
\begin{align}    \nonumber 
		&  \sum_{l=1}^L  \sum_{t = s_k + 1}^{e_k} \|\Delta_t [l] X_{t+1-l} \|_2^2 
				\geq     \frac{c_x  }{2}    \sum_{l=1}^L \sum_{t = s_k + 1}^{e_k} \|\Delta_t [l]  \|_2^2  
				\\
				-&	   ( \zeta/4) \sum_{l=1}^L \left( \sum_{ i,j  \in S  } \sqrt {   \sum_{t=s_k}^{e_k}  ( \Delta_t [l]_{i,j} )  ^2  }      \right)^2   -
 ( \zeta/4)   \sum_{l=1}^L   \left( \sum_{ i,j  \in S^c   } \sqrt {   \sum_{t=s_k}^{e_k} ( \Delta_t  [l] _{i,j})^2    }      \right)^2 \label{eq:PGL step 31}
	\end{align}
Then \eqref{eq-lem19-pf-5} and \eqref{eq:PGL step 31} together imply that 
	\begin{align*} 
		&   \sum_{l=1}^L  \sum_{t = s_k + 1}^{e_k} \|\Delta_t [l] \|_2^2    	 + \frac{\zeta}{4} \sum_{l=1}^L  \sum_{( i , j)\in S^c} \sqrt{\sum_{t = s_k + 1}^{e_k} \bigl(\Delta_t[l] _{i,j }\bigr) ^2}  
		\\
		 \leq  & 2\zeta \sum_{l=1}^L \sum_{ ( i ,j)\in S}    \sqrt{\sum_{t = s_k + 1}^{e_k} \bigl(\Delta_t[l] _{i,j }\bigr) ^2} 
		\le   
     C L\zeta^2 d_0 	 +    \frac{1}{2}	  \sum_{l=1}^L  \sum_{t = s_k + 1}^{e_k} \|\Delta_t [l]  \|_2^2     
	\end{align*}  
	and therefore 
		\begin{equation*}\label{eq-lem19-pf-6}
		 \sum_{l=1}^L  \sum_{t = s_k + 1}^{e_k} \|\Delta_t [l]  \|_2^2     \le C d_0 \zeta^2.
	\end{equation*}  
\\
\\
{\bf Step 4.}
 Denote $A^*  = A^*_{I_1}$ and that $B^*  = A^*_{I_2} =  A^*_{I_3}$.  We have that
	\[
	 	 \sum_{l=1}^L  \sum_{t = s_k + 1}^{e_k} \|\Delta_t [l]  \|_2^2    = |I_1| \|A^*  - \widehat A \|_2^2 + |I_2| \| B^* -  \widehat A \|_2^2 + |I_3| \|B ^*  - \widehat B \|_2^2.
	\]
Therefore  we have that
	\[
		\Delta\|\beta^*_1 - \widehat{\beta}_1\|_2^2/3 \leq |I_1| \|\beta^*_1 - \widehat{\beta}_1\|_2^2 \leq C_1 d_0 \zeta^2 \leq \frac{C_1  \Delta \kappa^2} {C_{\mathrm{SNR}}  \log^{\xi}(n \vee p) } \leq  \frac{c_1}{48} \Delta \kappa^2,
	\]
  Therefore we have
	\[
		\|A^* - \widehat{A} \|_2^2 \leq  \kappa^2/16.
	\]
	Similarly, 
In addition we have
	\[
	\|B^* - \widehat{B} \|_2^2 \leq  \kappa^2/16.
	\]	
	Therefore, it holds that 	
	$$ \| B^* -  \widehat A \|_2  \ge \| A^*-B^* \|_2 - \|A^* - \widehat A  \|  \ge \kappa -  ( 1/4) \kappa  \ge ( 3/4) \kappa $$
	which implies that 	
	\[
		3\kappa^2 |I_2|/4 \leq |I_2| \|\beta^*_2 - \widehat{\beta}_1\|_2^2 \leq C d_0 \zeta^2,
	\]
This directly implies that 
	\[
		|I_2| = |\widehat{\eta}_k - \eta_k| \leq \frac{Cd_0 \zeta^2}{\kappa^2},
	\]
as desired. 
\end{proof}

\section{Technical lemmas}
\label{sec:inequalities}
%!TEX root = ./vardp.tex

In this section, we provide technical lemmas.  

\begin{lemma}  \label{prop:deviation AR change point}
For \Cref{assume:AR change point}, under \Cref{assume:AR high dim coefficient}, the following holds.
	\begin{itemize}
		\item [(\textbf{a})]	  There exists an absolute constant $c > 0$, such that for any $u, v \in \{w \in \mathbb{R}^p:\, \|w\|_0 \leq s, \, \|w\|_2 \leq 1\}$ and any $\xi > 0$, it holds that
			\begin{align*}%begin{align}\label{eq:subgaussian s-sparse}
				\mathbb{P} \left\{\left|v^{\top}\sum_{t \in I}\left(X_t X_t^{\top} - \mathbb{E}\bigl\{X_t X_t^{\top}\bigr\}\right)v\right| \geq 2\pi \mathcal{M}\xi\right\} \leq 2 \exp\{-c \min\{\xi^2/|I|, \, \xi\}\}
			\end{align*}
			and
			\begin{align*} %begin{align} \label{eq:subgaussian 2s-sparse}
				\mathbb{P} \left\{\left|v^{\top}\sum_{t \in I}\left(X_t X_t^{\top} - \mathbb{E}\bigl\{X_t X_t^{\top}\bigr\}\right)u\right| \geq 6\pi \mathcal{M}\xi\right\} \leq 6 \exp\{-c \min\{\xi^2/|I|, \, \xi\}\};
			\end{align*}
			in particular, for any $i, j \in \{1, \ldots, p\}$, it holds that
			\begin{align}\label{eq:subgaussian s-sparse-1}
				\mathbb{P} \left\{\left|\left(\sum_{t \in I}\left(X_t X_t^{\top} - \mathbb{E}\bigl\{X_t X_t^{\top}\bigr\}\right)\right)_{ij}\right| \geq 6\pi \mathcal{M}\xi\right\} \leq 6 \exp\{-c \min\{\xi^2/|I|, \, \xi\}\}.
			\end{align}
		
		\item [(\textbf{b})]	  Let $\{Y_t\}_{t = 1}^n$ be a $p$-dimensional, centered, stationary process.  Assume that for any $t \in \{1, \ldots, n\}$,  $\mathrm{Cov}(X_t, Y_t) = 0$.  The joint process $\{(X_t^{\top}, Y_t^{\top})^{\top}\}_{t = 1}^n$ satisfies \Cref{assume:AR high dim coefficient}(\textbf{b}).  Let $f_Y$ be the spectral density function of $\{Y_t\}_{t = 1}^n$, and $f_{k, Y}$ be the cross spectral density function of $\mathcal{X}_k$ and $\{Y_t\}_{t = 1}^n$, $k \in \{0, \ldots, K\}$.  There exists an absolute constant $c > 0$, such that for any $u, v \in \{w \in \mathbb{R}^p:\, \|w\|_2 \leq 1\}$ and any $\xi > 0$, it holds that
			\begin{align*}
				& \mathbb{P} \Bigg\{\left|v^{\top}\sum_{t \in I}\left(X_t Y_t^{\top} - \mathbb{E}\bigl\{X_t Y_t^{\top}\bigr\}\right)u\right|  \geq 2\pi \max_{k = 0, \ldots, K}\bigl(\mathcal M (f_k) + \mathcal M (f_Y) + \mathcal M (f_{k,Y} ) \bigr) \xi \Bigg\} \\
				\leq & 6 \exp\{-c \min\{\xi^2/|I|, \, \xi\}\}.
			\end{align*}
	\end{itemize}
\end{lemma}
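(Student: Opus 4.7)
The plan is to reduce both parts to a stationary Gaussian Hanson-Wright bound applied on the interval $I$, invoking the spectral density control from \Cref{assume:AR high dim coefficient}(\textbf{b}) in the exact same spirit as Proposition~2.4 of \cite{basu2015regularized}, and then stitch together the at most $K+1$ pieces of $I$ that fall inside a single stationary regime.

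First I would decompose $I=\bigsqcup_{k=0}^{K} I_k$ with $I_k = I \cap [\eta_k,\eta_{k+1})$, so that by \Cref{assume:AR change point} the blocks $\{X_t\}_{t\in I_k}$ are mutually independent and each is a contiguous sub-sequence of the stable VAR($1$) process $\mathcal X^{\infty}_k$ with spectral density $f_k$. Stacking $X^{(k)} = (X_t)_{t\in I_k} \in \mathbb R^{p|I_k|}$ gives a centred Gaussian vector whose covariance matrix is block Toeplitz with blocks $\Sigma_k(\cdot)$; a standard Fourier argument shows $\|\mathrm{Cov}(X^{(k)})\|_{\op} \le 2\pi \,\mathcal M(f_k) \le 2\pi \mathcal M$. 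For fixed unit $v$, the quadratic form $v^\top \sum_{t\in I_k}(X_tX_t^\top - \mathbb E[X_tX_t^\top])v$ is a centred Gaussian chaos of the form $(X^{(k)})^\top Q_v X^{(k)} - \mathbb E[\cdot]$ with $\|Q_v\|_{\op} \le 1$ and $\|Q_v\|_F^2 \le |I_k|$. The Hanson-Wright inequality then yields
\[
\mathbb P\bigl\{|v^\top \textstyle\sum_{t\in I_k}(X_tX_t^\top - \mathbb E[X_tX_t^\top])v| \ge 2\pi\mathcal M \xi_k\bigr\} \le 2\exp\{-c\min(\xi_k^2/|I_k|,\,\xi_k)\}.
\]

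Next I would aggregate across blocks by applying Bernstein's inequality to the sum of the $K+1$ independent centred sub-exponential variables produced in the previous step. The independence lets the $|I_k|$'s add inside the variance proxy, giving a single tail of the form $\exp\{-c\min(\xi^2/|I|,\xi)\}$ as stated, without any extra $K$ factor. For the bilinear form with distinct sparse $u,v$, I would use polarization $4u^\top M v = (u+v)^\top M(u+v) - (u-v)^\top M(u-v)$ and apply the quadratic bound to each of the two vectors $u\pm v$, which have $\ell_2$-norm at most $2$ and thus give the stated constant $6\pi\mathcal M$ with a union-bound factor of $6$; the entry-wise statement \eqref{eq:subgaussian s-sparse-1} is the case $u=e_i$, $v=e_j$.

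For part (\textbf{b}) the same pipeline works once one identifies the right spectral density. The joint process $Z_t=(X_t^\top,Y_t^\top)^\top$ is piecewise stationary with spectral density
\[
f_{Z,k}(\theta) = \begin{pmatrix} f_k(\theta) & f_{k,Y}(\theta) \\ f_{k,Y}(\theta)^* & f_Y(\theta) \end{pmatrix},
\]
whose essential supremum operator norm is bounded by $\mathcal M(f_k)+\mathcal M(f_Y)+\mathcal M(f_{k,Y})$ via a triangle-inequality block decomposition. Writing $v^\top X_t Y_t^\top u = \tfrac14 \bigl[(\tilde v^\top Z_t)^2 - (\tilde v'^\top Z_t)^2\bigr]$ with $\tilde v = (v^\top,u^\top)^\top$ and $\tilde v' = (v^\top,-u^\top)^\top$ reduces the cross-covariance to the quadratic case already handled. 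The main obstacle is Step~2 above: ensuring the Bernstein aggregation keeps a single $|I|$ in the Gaussian regime rather than degrading to $K|I|$; this is handled by summing the mean-zero quadratic forms before taking tails, which requires verifying the sub-exponential Orlicz-norm bound is additive across independent summands, a routine but careful calculation.
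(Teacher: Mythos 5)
Your reduction is in the same spirit as the paper's (both follow Basu \& Michailidis (2015), Proposition~2.4, adapted to the piecewise-stationary setting), but you use a two-step route where the paper's is one-step, and this matters for the rigor of the aggregation.

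The paper's proof never separates into per-block tail bounds. It projects first, forming $w_t = v^\top X_t$ and $W = (w_t)_{t\in I} \sim \mathcal N(0, Q)$, and then observes that independence of the $\mathcal X_k$ makes $Q$ \emph{block-diagonal}, with $Q_k$ the Toeplitz covariance of the $k$th piece. Consequently $\|Q\|_{\op} = \max_k \|Q_k\|_{\op} \le 2\pi\mathcal M$ and $\|Q\|_F^2 \le |I|\|Q\|_{\op}^2$, so a \emph{single} application of Hanson--Wright to $Z^\top Q Z - \operatorname{tr}(Q)$ with $Z \sim\mathcal N(0,I)$ immediately yields $2\exp\{-c\min(\xi^2/|I|,\xi)\}$, with no aggregation required. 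You instead apply Hanson--Wright block by block to get $(K+1)$ centred Bernstein-type random variables and then invoke Bernstein's inequality to recombine them. This works, but your final remark flags the issue inaccurately: the obstruction is not that ``the sub-exponential Orlicz-norm bound is additive across independent summands'' --- the $\psi_1$ Orlicz norm is \emph{not} the right object to add, and if you only track $\|Z_k\|_{\psi_1} \asymp \sqrt{|I_k|}$ you will get a tail $\exp\{-c\min(\xi^2/|I|,\,\xi/\sqrt{\max_k|I_k|})\}$, which is strictly weaker than claimed. What you must do is carry \emph{both} Bernstein parameters $(\sigma_k^2 \asymp |I_k|,\ b_k \asymp 1)$ for each block and use the heterogeneous Bernstein inequality, in which the variance proxies add and the scale parameter is the maximum: $\exp\{-c\min(\xi^2/\sum_k\sigma_k^2,\ \xi/\max_k b_k)\} = \exp\{-c\min(\xi^2/|I|,\xi)\}$. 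Done that way your two-step route gives the same bound (up to the absolute constant $c$), but the paper's block-diagonal observation makes the whole step unnecessary, which is worth internalizing as the cleaner argument.

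Two smaller remarks. (1) Your polarization $4u^\top M v = (u+v)^\top M(u+v) - (u-v)^\top M(u-v)$ is valid because $M$ is symmetric, and after rescaling $\|u\pm v\|_2 \le 2$ it actually gives the sharper constants $4\pi\mathcal M$ and $4\exp$; the paper's constants $6\pi\mathcal M$ and $6\exp$ come from the alternative decomposition $2u^\top M v = (u+v)^\top M(u+v) - u^\top M u - v^\top M v$. Either establishes the stated inequality. (2) In part (\textbf{b}) your joint-process reduction via $\tilde v = (v^\top,u^\top)^\top$ is the same as the paper's scalar reduction $w_t + z_t = v^\top X_t + u^\top Y_t$; both bound the essential supremum of the joint spectral density by $\mathcal M(f_k)+\mathcal M(f_Y)+\mathcal M(f_{k,Y})$ and then reuse part (\textbf{a}).
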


Although there exists one difference between \Cref{prop:deviation AR change point} and Proposition 2.4 in \cite{basu2015regularized} that we have $K+1$ different spectral density distributions, while in \cite{basu2015regularized}, $K = 0$, the proof can be conducted in  a similar  way, only noticing that the largest eigenvalue should be taken as the largest over all $K+1$ different spectral density functions due to independence.

\begin{proof} 
The proof is an extension of Proposition 2.4 in \cite{basu2015regularized} to the change point setting.  As for (\textbf{a}), let $w_t = X_t^{\top}v$ and $W = (w_1, \ldots, w_n)^{\top} \sim \mathcal{N}_n(0, Q)$, where $Q\in \mathbb R^{n\times n} $ has the following block structure due to the independence,
	$$
		Q = \begin{pmatrix}
 			Q_1 & 0  & \cdots & 0\\
 			0 & Q_2 & \cdots & 0 \\
 			\vdots & \vdots  &\ddots & \vdots\\
			0 & 0  & \cdots & Q_{K+1}\\
		\end{pmatrix},  
	$$ 
	where $Q_k \in \mathbb R^{(\eta_k-\eta_{k-1}) \times (\eta_k-\eta_{k-1})}$, $k = 1, \ldots, K+1$.  By the proof of proposition 2.4 in \cite{basu2015regularized}, $\| Q_k \|_{\op}\le \mathcal M$.  Note that 
	$$ 
		v^{\top} \widehat \Sigma_n (X, X) v = n^{-1} W^{\top}W = n^{-1} Z^{\top}QZ,
	$$
	where $Z \sim \mathcal{N}_n(0, I)$.  The rest follows the proof of Proposition 2.4 in \cite{basu2015regularized}.
	
As for (\textbf{b}), denote $w_t = X_t^{\top}u$ and $z_t = Y_t^{\top}v$.  Observe that for $t \in [\eta_k, \eta_{k+1})$, $k = 0, \ldots, K$, the process $\{w_t + z_t\}$ has the spectral density function as follows, for $\theta \in (-\pi, \pi]$,
	$$ 
		f_{w_k+z_k}(\theta) = (u, v)^{\top} \begin{pmatrix}
			f_k(\theta) & f_{k, Y}(\theta) \\
			f_{k, Y}^* (\theta) & f_Y(\theta)   
		\end{pmatrix}  \begin{pmatrix}
			u\\
			v
		\end{pmatrix} = u^{\top}f_k(\theta)u + v^{\top}f_Y(\theta) v + u^{\top}f_{k, Y} (\theta) v + v^{\top}f^*_{k, Y}(\theta) u. 
	$$
	Therefore $\mathcal M ( f_{w_k+z_k}) \le  \mathcal M (f_k) + \mathcal M (f_Y) + \mathcal M (f_{k,Y})$, and the rest follows from (\textbf{a}). 
\end{proof}

\begin{lemma}\label{coro:restricted eigenvalue AR} 
For $\{X_t\}_{t=1}^n$ satisfying \Cref{assume:AR change point} and  \Cref{assume:AR high dim coefficient}, we have the following.
	\begin{itemize}
		\item [(\textbf{a})]	 For any interval $I \subset \{1, \ldots, n\}$ and any $l \in \{1, \ldots, L\}$, it holds that 
			\begin{align*}
				\mathbb{P}\left\{\left\|\sum_{t \in I} \varepsilon_{t + 1} X_{t+1-l}^{\top}\right\|_{\infty} \leq C\max\{\sqrt{|I|\log(n\vee p)}, \, \log(n\vee p)\}\right\} > 1 -  (n\vee p)^{-6},
			\end{align*}
			where  $C > 0$ is a constant depending on $\mathcal M (f_k)$, $\mathcal M (f_Y)$, $\mathcal M (f_{k, \varepsilon})$, $k = 0, \ldots, K$.

		\item [(\textbf{b})] Let $L=1$ and $C$ be some constant depending on $\mathfrak{m}$ and $\mathcal M$.  For any interval $I \subset \{1, \ldots, n\}$ satisfying
			\begin{equation}\label{eq-I-size}
				|I| > C \log(p) ,
			\end{equation}
			with probability at least $1 -  n^{-6}$, it holds that for any $B \in \mathbb{R}^{p \times p}$,
			\begin{align}\label{eq:good event restricted eigenvalue}
				\sum_{t \in I} \|BX_t\|_2^2 \geq \frac{|I| c_x }{2} \|B\|_2^2 - C_x\log(p) \|B\|_1^2,
			\end{align}
			where $C_x, c_x > 0$ are absolute positive constants depending on all the other constants.
			
		\item [(\textbf{c})] Let  $L\in \mathbb Z^+$. For any interval $I \subset \{1, \ldots, n\}$ satisfying \begin{equation}\label{eq-I-size L>1}
				|I| > C \max\{\log(p) , KL\},
			\end{equation}
					with probability at least $1 -  n^{-6}$, it holds that for any $\{B[l], \, l = 1, \ldots, L\} \subset \mathbb{R}^{p \times p}$,
			\begin{align}\label{eq:good event restricted eigenvalue}
				\sum_{t\in I} \|  \sum_{l =1}^L B[l]  X_{t+1-l} \|_2^2 \ge \frac{|I|c_x }{4}     \sum_{l =1}^L   \|B[l]\|_2^2 - C_x \log(p)\sum_{l=1}^L   \|B[l] \|_1^2,
		\end{align} 
		where $C, c_x, C_x > 0$ are absolute constants. 			
	\end{itemize}
\end{lemma}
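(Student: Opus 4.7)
The plan is to prove parts (\textbf{a}), (\textbf{b}), (\textbf{c}) of the stated lemma by combining the Bernstein-type deviation inequalities in \Cref{prop:deviation AR change point} with a discretization argument and a Loh--Wainwright-style reduction from sparse to arbitrary vectors. The VAR($L$)$\to$VAR(1) lifting from \Cref{sec-trans} lets us focus the hard work on $L=1$ and then port the statement to general $L$.

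For (\textbf{a}), I would apply \Cref{prop:deviation AR change point}(\textbf{b}) entrywise to the matrix $M:=\sum_{t\in I}\varepsilon_{t+1}X_{t+1-l}^{\top}$. Since $\varepsilon_{t+1}$ is independent of $X_{t+1-l}$ for $l\ge 1$, the cross-covariance vanishes and the joint process $(\varepsilon_t^{\top},X_t^{\top})^{\top}$ satisfies the spectral density condition (the spectral density of $\varepsilon$ is the constant $\sigma_\varepsilon^2/(2\pi)$ and the cross spectrum is controlled by the stability of each piece). Taking $v=e_i$, $u=e_j$ and $\xi=C\max\{\sqrt{|I|\log(n\vee p)},\,\log(n\vee p)\}$ gives a tail bound of order $\exp\{-c\log(n\vee p)\}$ on each $M_{ij}$. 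A union bound over the $p^2 L \le L(n\vee p)^2$ entries (and over lags) absorbs the $(n\vee p)^2$ factor into the constant $C$ and produces the claimed $1-(n\vee p)^{-6}$ probability.

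For (\textbf{b}), write $\|BX_t\|_2^2=\sum_{k=1}^{p} b_k^{\top} X_tX_t^{\top} b_k$ where $b_k$ is the $k$-th row of $B$. Summing over $t\in I$ reduces the matrix RE inequality to the vector-level statement
\[
   b^{\top}\!\!\sum_{t\in I} X_tX_t^{\top} b \;\ge\; \tfrac{c_x|I|}{2}\|b\|_2^2 - C_x\log(p)\|b\|_1^2,\qquad\forall\, b\in\mathbb R^p,
\]
with high probability. The expected quadratic form is bounded below by $2\pi\mathfrak{m}|I|\|b\|_2^2$ by \Cref{assume:AR high dim coefficient}(\textbf{b}) applied on each of the $K+1$ stationary pieces. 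For the deviation piece I would first control the supremum of $|b^{\top}\sum_{t\in I}(X_tX_t^{\top}-\mathbb{E}[X_tX_t^{\top}])b|$ over $\{b:\|b\|_2\le 1,\|b\|_0\le s\}$ with $s=\lceil|I|/\log p\rceil$, by invoking \Cref{prop:deviation AR change point}(\textbf{a}) on each $s$-sparse face, combined with a $1/4$-net (of cardinality $9^s$) and a union bound over the $\binom{p}{s}$ possible supports. The resulting sparse supremum is $\lesssim\sqrt{s\log(p)/|I|}\cdot|I|\|b\|_2^2$ on the regime $|I|>C\log p$. A standard discretization lemma (Loh--Wainwright, Lemma~12) then extends this sparse inequality to general $b$ at the price of a $\log(p)\|b\|_1^2$ remainder; summing over $k=1,\dots,p$ finishes the matrix version.

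For (\textbf{c}), I would invoke the lifting: set $Y_t=(X_t^{\top},\ldots,X_{t-L+1}^{\top})^{\top}\in\mathbb R^{pL}$ and $\mathcal B=(B[1],\ldots,B[L])\in\mathbb R^{p\times pL}$, so that $\sum_{l=1}^{L}B[l]X_{t+1-l}=\mathcal B Y_t$ with $\|\mathcal B\|_F^2=\sum_l\|B[l]\|_F^2$ and $\|\mathcal B\|_1=\sum_l\|B[l]\|_1$. The lifted process is a piecewise VAR(1) with coefficient matrices $\mathcal A_t^*$ that remain stable by \eqref{eq-coef-cond-stability}, and its spectral density is bounded above and below in the same order as those of $X$. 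Applying part (\textbf{b}) to $\{Y_t\}$ yields (\textbf{c}); the boundary issue, namely that the $L$ observations immediately following each of the $K$ change points involve unobserved latents $\widetilde X$, contributes at most $O(KL)$ perturbed terms to the quadratic form, which is exactly why the hypothesis $|I|>C\max\{\log p,KL\}$ keeps them in the lower-order remainder.

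The main obstacle is step (\textbf{b}): producing a sparse supremum bound with the correct probability exponent under temporal dependence, and then peeling it to arbitrary $b$ with the tight remainder $C_x\log(p)\|b\|_1^2$. Because subsequent $X_t$ are not independent, standard sub-Gaussian chaining cannot be used directly; one must rely on \Cref{prop:deviation AR change point}, whose Bernstein-type tail is Gaussian in the regime $\xi\lesssim|I|$ and exponential beyond. The thresholds $|I|>C\log p$ and $|I|>C\max\{\log p,KL\}$ are precisely what put us in the Gaussian regime and produce the $\sqrt{\log(p)/|I|}$ rate needed for the RE conclusion, while ensuring that the across-change-point boundary corrections remain negligible.
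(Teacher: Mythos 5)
Your proposal follows the paper's own proof very closely for all three parts, so this is essentially the same approach. For (\textbf{a}), you and the paper both reduce to \Cref{prop:deviation AR change point}(\textbf{b}) with $Y_t=\varepsilon_t$ and a union bound over the $p^2L$ entries/lags. For (\textbf{b}), both proofs run the Loh--Wainwright/Basu--Michailidis programme: reduce $\|BX_t\|_2^2$ to row-wise quadratic forms (your $\sum_k b_k^\top X_tX_t^\top b_k$ is the same object as the paper's $(\mathrm{vec} B)^\top(\widehat\Sigma_I\otimes I_p)\mathrm{vec} B$), control the $s$-sparse supremum of $\widehat\Sigma_I-\Sigma_I^*$ via \Cref{prop:deviation AR change point}(\textbf{a}), a $1/4$-net of cardinality $9^{2s}$, and a union bound over $\binom{p}{2s}$ supports, then peel to arbitrary $b$ with the $\|b\|_1^2$ remainder. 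For (\textbf{c}), both lift to the $pL$-dimensional $Y_t$ and remove the $O(KL)$ boundary indices under $|I|>C\max\{\log p, KL\}$.

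Two small remarks, neither a gap. In (\textbf{b}) you explicitly take $s\asymp|I|/\log p$, which is indeed the scaling that makes the peeling remainder come out as $C_x\log(p)\|B\|_1^2$: with sparse deviation $\lesssim\sqrt{|I|(s\log p+\log n)}$ the Loh--Wainwright remainder coefficient is $\asymp\sqrt{|I|\log p/s}$, and only $s\asymp|I|/\log p$ (with a small enough proportionality constant so the main term survives) yields $\asymp\log p$. The paper writes down a fixed constant $s$ when citing Lemmas 12--13 of Loh--Wainwright and Proposition 4.2 of \cite{basu2015regularized}; that constant choice does not by itself produce the stated remainder, so your scaling is the one that actually closes the argument. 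In (\textbf{c}), you describe the latent $\widetilde X$'s near change points as giving $O(KL)$ ``perturbed terms'' to be absorbed; the cleanest version (and the one the paper uses) is to simply drop these $\le KL$ indices from the lower bound --- $\sum_{t\in I}\ge\sum_{t\in I\setminus\mathcal I}$ --- apply (\textbf{b}) to the surviving terms, where the observed $X$'s coincide with the components of $Y_t$, and use $|I|-|\mathcal I|\ge|I|/2$. Your hypothesis $|I|>C\max\{\log p, KL\}$ is exactly what licences this; you might make the ``drop, don't perturb'' step explicit to avoid having to control the mismatch between $\sum_l B[l]X_{t+1-l}$ and $\mathcal B Y_t$ at boundary $t$.
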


\begin{proof} 
The claim (\textbf{a}) is a direct application of \Cref{prop:deviation AR change point}(\textbf{b}), by setting $Y_t = \varepsilon_t$. 

As for (\textbf{b}), let $\widehat{\Sigma}_I = (|I|)^{-1}\sum_{t \in I}X_t X_t^{\top}$ and $\Sigma^*_I = \mathbb{E}(\widehat{\Sigma}_I)$.  It is due to \eqref{eq:subgaussian s-sparse-1} that with probability at least $1 - 6n^{-5}$, it holds that
	\begin{align}
		& (|I|)^{-1} \sum_{t \in I} \|BX_t\|_2^2 = (|I|)^{-1} \sum_{t \in I} \|(X_t^{\top} \otimes I) \mathrm{vec}(B)\|_2^2 = (\mathrm{vec}(B))^{\top} \left(\widehat{\Sigma}_I \otimes I_p\right)\mathrm{vec}(B) \nonumber \\	
		\geq & (\mathrm{vec}(B))^{\top} \left(\Sigma^*_I \otimes I_p\right)\mathrm{vec}(B) - \left|(\mathrm{vec}(B))^{\top} \left\{\left(\widehat{\Sigma}_I - \Sigma^*_I\right) \otimes I_p\right\}\mathrm{vec}(B)\right| \nonumber \\
		 \geq & c_x^2/2 \|B\|_2^2 - \frac{\log(p)}{|I|}\|B\|_1^2.\label{eq-derive-re}
	\end{align}
	The last inequality in \eqref{eq-derive-re} follows the proof of Lemmas~12 and 13 in the Supplementary Materials in \cite{loh2011high}, and the proof of Proposition~4.2 in \cite{basu2015regularized}, by taking 
	\[
		\delta = \frac{6\pi \mathcal{M} \sqrt{\log(p)}}{\sqrt{c |I|}} \leq \frac{c_x^2}{54}, 
	\]
	where the inequality holds due to \eqref{eq-I-size}, and by taking 
	\[
		s = \Bigg\lceil \frac{2 \times (27 \times 6\pi \mathcal{M})^2}{C_x 5^{3/2} c_x^2} \Bigg\rceil.
	\]

As for (\textbf{c}), let $\{Y_t\}_{t=1}^T$ be defined as in \eqref{eq-transform-l-to-1}.  With the notation in \Cref{sec-trans}, we have the VAR(1) process defined as
	\[
		Y_{t+1} = \mathcal{A}^*_t Y_t + \zeta_{t+1}.
	\]
	Since $|\mathcal I|\le KL$, it follows from {(\bf b)} that for any $\{B[l], \, l = 1, \ldots, L\} \subset \mathbb{R}^{p \times p}$, with 
	\[
		B = \left(\begin{array}{ccccc}
			B[1] & B[2] & \cdots & B[L-1] & B[L] \\
			I & 0 & \cdots & 0 & 0 \\
			0 & I & \cdots & 0 & 0 \\
			\vdots & \vdots & \ddots & \vdots & \vdots \\
			0 & 0 & \cdots & I & 0 
		\end{array}
		\right),
	\]
	$$
		\sum_{t \in I - \mathcal I }   \left  \|B Y_t   \right \| _2^2\ge \frac{|I| -| \mathcal I |}{2}  c_x \|B\|_2^2 - C\log(p) \|B\|_1^2 \ge \frac{|I| c_x }{4}   \|B\|_2^2 - C_x\log(p) \|B\|_1^2, 
	$$
	where last inequality holds because $|\mathcal I| \le |I|/2$.  This implies that
	$$  
		\sum_{t\in I}  \left \|  \sum_{l=1}^L B[l]  X_{t}\right \|_2^2 \ge \frac{|I|c_x }{4}   \sum_{l=1}^L \|B[l]\|_2^2 - C_x \log(p)\sum_{l=1}^L   \|B[l] \|_1^2.
	$$
\end{proof} 

\begin{lemma}[Generalized H\ou lder inequality] \label{lemma:gholder}
For any function $f(t, i): \, \{1, \ldots, T\} \times \{1, \ldots, p\} \to \mathbb R$, it holds that
	$$ 
		\sqrt {   \sum_{t=1}^T \left ( \sum_{i=1}^p f(t,i) \right) ^2   } \le \sum_{i=1} ^p \sqrt { \sum_{t=1}^T f(t,i)^2 } .
	$$
\end{lemma}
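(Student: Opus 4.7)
The plan is to recognize the claimed inequality as Minkowski's inequality for the Euclidean norm. Specifically, for each $i \in \{1, \ldots, p\}$, I would define the vector $g_i \in \mathbb{R}^T$ by $g_i = (f(1, i), \ldots, f(T, i))^\top$. Then
\[
\sqrt{\sum_{t=1}^T \left(\sum_{i=1}^p f(t, i)\right)^2} = \left\|\sum_{i=1}^p g_i\right\|_2 \quad \text{and} \quad \sum_{i=1}^p \sqrt{\sum_{t=1}^T f(t, i)^2} = \sum_{i=1}^p \|g_i\|_2,
\]
so the claim reduces to $\|\sum_{i=1}^p g_i\|_2 \le \sum_{i=1}^p \|g_i\|_2$, which is just the triangle inequality for the $\ell^2$ norm on $\mathbb{R}^T$.

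If a fully self-contained derivation is preferred, I would instead square both sides and expand. The square of the left-hand side equals
\[
\sum_{i, j = 1}^p \sum_{t = 1}^T f(t, i)\, f(t, j).
\]
Applying the ordinary Cauchy--Schwarz inequality to the inner sum over $t$ for each fixed pair $(i, j)$ gives $\sum_{t} f(t, i) f(t, j) \le \sqrt{\sum_t f(t, i)^2}\, \sqrt{\sum_t f(t, j)^2}$. Summing this bound over $i, j \in \{1, \ldots, p\}$ recognizes the resulting expression as $\bigl(\sum_{i=1}^p \sqrt{\sum_t f(t, i)^2}\bigr)^2$, i.e.\ the square of the right-hand side. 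Taking square roots finishes the argument.

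There is no real obstacle here: this is a textbook fact about the Minkowski/triangle inequality for the $\ell^2$ norm, and both approaches above take essentially one line each. The lemma is used earlier in the proof of Theorem~\ref{theorem:lr} to bound $\sum_t \|\Delta_t[l]\|_1^2$ in terms of $\bigl(\sum_{i, j} \sqrt{\sum_t (\Delta_t[l]_{i,j})^2}\bigr)^2$, which is exactly the statement above after reindexing the pair $(i, j) \in \{1, \ldots, p\}^{\times 2}$ as a single index ranging over $\{1, \ldots, p^2\}$.
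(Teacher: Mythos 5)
Your proof is correct. The paper in fact states this lemma without proof (it is a standard fact), and both of your arguments — identifying the claim as the triangle inequality for the $\ell^2$ norm applied to the vectors $g_i = (f(1,i), \ldots, f(T,i))^\top$, or expanding the square and invoking Cauchy--Schwarz termwise — are clean, complete, and exactly the right level of detail for this result.
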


\begin{lemma} \label{lemma: AR lasso}  
For $\{X_t\}_{t=1}^n$ satisfying \Cref{assume:AR change point} with $L = 1$ and  \Cref{assume:AR high dim coefficient}, and an integer interval $I \subset \{1, \ldots, n\}$, we suppose that there exists no true change point in $I$ and
	\begin{equation}\label{eq-lem-cond-i-length-lower-bound}
		|I| > \frac{40C_x \log(n\vee p) d_0}{c_x^2},
	\end{equation}
	where $C_x, c_x > 0$ are specified in \Cref{coro:restricted eigenvalue AR}.  It holds with probability at least $1 -  2(n \vee p)^{-5}$ that
	\[
		\left\|\widehat A_{I}^\lambda - A_I^*\right\|_2 \leq  \frac{C \lambda\sqrt{d_0}}{\sqrt{|I|}}, \quad \left\|\widehat{A}_{I}^\lambda - A_I^* \right\|_1 \leq \frac{C d_0 \lambda}{\sqrt {|I|}}   \quad\mbox{and}    \quad  \left\|\widehat{A}_{I}^\lambda (S^c ) \right\|_1 \leq \frac{C d_0 \lambda}{\sqrt {|I|}} 
	\]
	where $C_1 > 0$ is an absolute constant depending on all the other constants.  
\end{lemma}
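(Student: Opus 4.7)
Since there is no change point in $I$, we have $A^*_I = A^*_t$ for every $t \in I$, so that $A^*_I$ is supported on $S$ with $|S| = d_0$.  Write $\Delta = \widehat A_I^{\lambda} - A^*_I$ and note that $X_t = A^*_I X_{t-1} + \varepsilon_t$ for $t \in I$.  The plan is a standard \lasso basic-inequality argument adapted to the VAR$(1)$ setting, using \Cref{coro:restricted eigenvalue AR}(\textbf{a}) for the noise term and \Cref{coro:restricted eigenvalue AR}(\textbf{b}) for the restricted eigenvalue.

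First, by the definition of $\widehat A_I^{\lambda}$ in \eqref{eq:VARD oracle} and cancelling the common $\sum_t \|\varepsilon_t\|_2^2$ term, I would obtain the basic inequality
\[
\sum_{t \in I} \|\Delta X_{t-1}\|_2^2 \;\le\; 2 \sum_{t \in I} \varepsilon_t^{\top}\Delta X_{t-1} + \lambda \sqrt{|I|}\,\bigl(\|A^*_I\|_1 - \|\widehat A^{\lambda}_I\|_1\bigr).
\]
For the cross term, Hölder's inequality and \Cref{coro:restricted eigenvalue AR}(\textbf{a}) yield, on an event of probability at least $1 - (n\vee p)^{-6}$,
\[
2\sum_{t \in I} \varepsilon_t^{\top}\Delta X_{t-1} \le 2\Bigl\|\sum_{t \in I}\varepsilon_t X_{t-1}^{\top}\Bigr\|_{\infty}\|\Delta\|_1 \le \tfrac{\lambda}{2}\sqrt{|I|}\,\|\Delta\|_1,
\]
provided $C_\lambda$ in $\lambda = C_\lambda\sqrt{\log(n\vee p)}$ is chosen large enough relative to the constant in \Cref{coro:restricted eigenvalue AR}(\textbf{a}); note also $|I|\log(n\vee p) \ge \log^2(n\vee p)$ under \eqref{eq-lem-cond-i-length-lower-bound} so the $\sqrt{|I|\log(n\vee p)}$ branch dominates.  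Splitting $\|\widehat A^{\lambda}_I\|_1 = \|\widehat A^{\lambda}_I(S)\|_1 + \|\widehat A^{\lambda}_I(S^c)\|_1$ and using $A^*_I(S^c) = 0$ and the reverse triangle inequality on $S$, I get the cone condition
\[
\|\Delta(S^c)\|_1 \le 3\|\Delta(S)\|_1, \qquad \|\Delta\|_1 \le 4\sqrt{d_0}\,\|\Delta\|_2,
\]
where the second bound comes from $|S| = d_0$ and Cauchy--Schwarz on $S$.

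Next I would apply the restricted eigenvalue bound of \Cref{coro:restricted eigenvalue AR}(\textbf{b}) (valid since $|I| > C\log(p)$ by \eqref{eq-lem-cond-i-length-lower-bound}) to lower bound the quadratic form:
\[
\sum_{t\in I}\|\Delta X_{t-1}\|_2^2 \ge \tfrac{|I|c_x}{2}\|\Delta\|_2^2 - C_x\log(p)\|\Delta\|_1^2 \ge \tfrac{|I|c_x}{2}\|\Delta\|_2^2 - 16 C_x d_0 \log(p)\|\Delta\|_2^2.
\]
The assumption $|I| > 40 C_x \log(n\vee p)\, d_0 / c_x^2$ makes the second term smaller than $\tfrac{|I|c_x}{4}\|\Delta\|_2^2$, so the lower bound simplifies to $\tfrac{|I|c_x}{4}\|\Delta\|_2^2$.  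Combining with the upper bound from the basic inequality and the cone condition gives
\[
\tfrac{|I|c_x}{4}\|\Delta\|_2^2 \le 2\lambda\sqrt{|I|}\,\|\Delta(S)\|_1 \le 2\lambda\sqrt{|I|\,d_0}\,\|\Delta\|_2,
\]
from which the $\ell_2$-bound $\|\Delta\|_2 \le C\lambda\sqrt{d_0}/\sqrt{|I|}$ follows.  The $\ell_1$-bound $\|\Delta\|_1 \le 4\sqrt{d_0}\|\Delta\|_2 \le Cd_0\lambda/\sqrt{|I|}$ is then immediate, and finally $\|\widehat A^{\lambda}_I(S^c)\|_1 = \|\Delta(S^c)\|_1 \le 3\|\Delta(S)\|_1 \le 3\sqrt{d_0}\|\Delta\|_2 \le Cd_0\lambda/\sqrt{|I|}$.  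A union bound over the events from \Cref{coro:restricted eigenvalue AR}(\textbf{a}) and (\textbf{b}) gives the stated probability $1 - 2(n\vee p)^{-5}$.

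The analysis is routine; the only calibration that requires care is the constant in \eqref{eq-lem-cond-i-length-lower-bound}, which must be large enough that the $C_x\log(p)\|\Delta\|_1^2$ slack from the restricted eigenvalue bound is absorbed into half of the restricted eigenvalue, and correspondingly that the cone condition transfers cleanly from $\ell_1$ to $\ell_2$ via the sparsity of $S$.
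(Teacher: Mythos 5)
Your proposal is correct and follows essentially the same route as the paper's proof: the standard \lasso basic inequality, the noise term controlled via \Cref{coro:restricted eigenvalue AR}(\textbf{a}), the cone condition $\|\Delta(S^c)\|_1 \le 3\|\Delta(S)\|_1$, the restricted eigenvalue lower bound from \Cref{coro:restricted eigenvalue AR}(\textbf{b}) with the $\|\Delta\|_1^2$ slack absorbed using the cone condition and \eqref{eq-lem-cond-i-length-lower-bound}, and finally Cauchy--Schwarz on $S$. The only differences are immaterial constant bookkeeping (e.g.~$16$ versus $20$ in front of $C_x d_0 \log p$) and minor time-index shifts.
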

 
\begin{proof}
Denote $A^* = A^*_I$ and $\widehat{A} = \widehat{A}_I^{\lambda}$.   
\vskip 3mm
\noindent {\bf Step 1.} Due to the definition of $\widehat{A}$, it follows that
	\begin{align*}
		\sum_{t \in I} \|X_{t} - \widehat{A} X_{t-1}\|_2^2 + \lambda  \sqrt  {     |I|    }  \|\widehat{A}\|_1 \leq  \sum_{t \in I} \|X_{t} - A^* X_{t-1}\|_2^2 + \lambda  \sqrt  {    |I|    }  \|A^*\|_1,
	\end{align*} 
	by \Cref{coro:restricted eigenvalue AR}({\bf a}) and \eqref{eq-lem-cond-i-length-lower-bound}, we have that with probability at least $1 - n^{-6}$,
	\begin{align}
		& \sum_{t \in I} \|\widehat{A}X_t - A^*X_t\|_2^2 +\lambda  \sqrt  {   |I|    }  \|\widehat{A}\|_1 \leq 2 \sum_{t \in I} \varepsilon_{t+1}^{\top}(\widehat{A} - A^*)X_t  + \lambda   \sqrt  {   |I|    }   \|A^*\|_1 \nonumber \\
		\leq & 2 \|\widehat{A} - A^*\|_1 \left\|\sum_{t \in I}\varepsilon_{t+1}X_t^{\top}\right\|_{\infty} + 
		\lambda \sqrt  {   |I|    }   \|A^*\|_1  \nonumber 
		\\ 
		\leq 
		& 
		  \frac{ \lambda}{2} \sqrt  {   |I|    }    \|\widehat{A} - A^*\|_1 + \lambda  \sqrt  {   |I|    }   \|A^*\|_1.  
		\label{eq:AR standard lasso step 1}
	\end{align} 
	Since 
	$$ \|\widehat{A} - A^*\|_1 = \|\widehat{A} (S) - A^*(S) \|_1 
+ 	 \| \widehat{A} (S^c) - A^*(S^c) \|_1   ,
	$$ 
	 \eqref{eq:AR standard lasso step 1} implies  that 
	\begin{align}\label{eq:var sparsity of estimators}
	\|\widehat{A} (S^c) - A^*(S^c)  \|_1 = \|\widehat{A} (S^c)\|_1 \le  3 \|\widehat A(S ) -  A^*(S)\|_1  .
	\end{align} 

\vskip 3mm
\noindent {\bf Step 2.}   Observe that 
\begin{align*} & \sum_{t \in I} \|\widehat{A}X_t - A^*X_t\|_2^2    \ge 
\frac{|I|c_x^2}{2} \| \widehat{A} - A^*\|_2^2 - C_x \log(p) \| \widehat{A} - A^*\|_1^2
\\
\ge 
& \frac{|I|c_x^2}{2} \| \widehat{A} - A^*\|_2^2  -2C_x\log(p) \| \widehat{A}(S) - A^*(S)\|_1^2 - 2C_x\log(p) \| \widehat{A}(S^c) - A^*(S^c)\|_1^2 
\\
\ge 
& 
 \frac{|I|c_x^2}{2} \| \widehat{A}  - A^*\|_2^2  -20 C_x\log(p) \| \widehat{A}(S) - A^*(S)\|_1^2
 \\
 \ge 
 & \left (  \frac{|I|c_x^2}{2} - 20 C_x\log(p) d_0  \right) \| \widehat{A}  - A^*\|_2^2 
\\
 \ge 
 &   \frac{|I|c_x^2}{4}\| \widehat{A}  - A^*\|_2^2 ,
\end{align*}
where   the first inequality follows from  \Cref{coro:restricted eigenvalue AR}  ({\bf b}),  the second inequality follows from 
\eqref{eq:var sparsity of estimators} and the last inequality follows from \eqref{eq-lem-cond-i-length-lower-bound}. 
	The above display, \eqref{eq:AR standard lasso step 1} and \eqref{eq-lem-cond-i-length-lower-bound} imply that 
	\begin{align*}
		&  \frac{|I|c_x^2}{4} \| \widehat{A} - A^*\|_2^2   
		\leq  	  \lambda/2\sqrt{|I|} \|\widehat{A}  (S)- A^*(S) \|_1  
	\end{align*}
	which directly gives 
		\begin{align*}	 
	\left\|\widehat A  - A ^*\right\|_2  \le  \frac{C_1 \lambda\sqrt{d_0}}{\sqrt{|I|}} .
	\end{align*} 
Since  $$\left\|\widehat A  - A ^*\right\|_1 \le  \left\|\widehat A  (S) - A ^*( S)\right\|_1  \le 4  \left\|\widehat A  (S) - A ^*( S)\right\|_1 
\le  	\sqrt {d_0}\left\|\widehat A  - A ^*\right\|_2, $$
the second desired result immediately follows. 
\end{proof}

\begin{lemma}\label{lemma:RSS AR lasso 1}  
Under all the conditions in in \Cref{lemma: AR lasso}, it holds with probability at least $1 - 6(n \vee p)^{-5}$ that
	\[
		\left|\sum_{t \in I} \|X_{t+1} - A_{I}^*X_t\|^2 - \sum_{t \in  I} \|X_{t+1} - \widehat{A}^\lambda_{I}X_t\|^2\right| \leq C d_0 \lambda^2,
	\] 
	where $C > 0$ is an absolute constant.
\end{lemma}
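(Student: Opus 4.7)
The plan is to exploit the fact that, since $I$ contains no true change point, the ``in--sample residual'' at the population level is exactly the noise. Writing $A^* = A_I^*$, $\widehat A = \widehat A_I^\lambda$ and $\Delta = \widehat A - A^*$, we have $X_{t+1} - A^* X_t = \varepsilon_{t+1}$ for every $t \in I$, and therefore
\begin{align*}
\sum_{t \in I} \|X_{t+1} - \widehat A X_t\|_2^2 \;=\; \sum_{t \in I} \|\varepsilon_{t+1} - \Delta X_t\|_2^2 \;=\; \sum_{t \in I}\|\varepsilon_{t+1}\|_2^2 \;-\; 2\sum_{t \in I} \varepsilon_{t+1}^\top \Delta X_t \;+\; \sum_{t \in I} \|\Delta X_t\|_2^2,
\end{align*}
so that the quantity to control becomes
\[
\sum_{t \in I}\|X_{t+1} - A^* X_t\|_2^2 - \sum_{t \in I}\|X_{t+1} - \widehat A X_t\|_2^2 \;=\; 2\sum_{t \in I} \varepsilon_{t+1}^\top \Delta X_t \;-\; \sum_{t \in I} \|\Delta X_t\|_2^2.
\]

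For the cross term, I will apply H\"older's inequality to get
\[
\Bigl|\sum_{t \in I} \varepsilon_{t+1}^\top \Delta X_t\Bigr| \;\le\; \|\Delta\|_1 \Bigl\|\sum_{t\in I} \varepsilon_{t+1} X_t^\top\Bigr\|_\infty,
\]
then combine the noise deviation bound from \Cref{coro:restricted eigenvalue AR}(\textbf{a}), which gives $\|\sum_{t\in I}\varepsilon_{t+1}X_t^\top\|_\infty \lesssim \sqrt{|I|\log(n\vee p)}$ on a high-probability event, with the $\ell_1$ estimation rate $\|\Delta\|_1 \le C d_0 \lambda/\sqrt{|I|}$ from \Cref{lemma: AR lasso}. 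Since $\lambda \asymp \sqrt{\log(n\vee p)}$, the product telescopes to $C d_0 \lambda^2$.

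For the squared term, rather than invoking a restricted eigenvalue lower bound (which goes in the wrong direction), I will revisit the basic inequality in the proof of \Cref{lemma: AR lasso}: the optimality of $\widehat A$ together with the noise control yields
\[
\sum_{t \in I} \|\Delta X_t\|_2^2 \;\le\; \tfrac{\lambda}{2}\sqrt{|I|}\,\|\Delta\|_1 \;+\; \lambda\sqrt{|I|}\bigl(\|A^*\|_1 - \|\widehat A\|_1\bigr),
\]
and the second term is $\le \lambda\sqrt{|I|}\|\Delta(S)\|_1$ by the usual decomposition $\|A^*\|_1 - \|\widehat A\|_1 \le \|\Delta(S)\|_1 - \|\widehat A(S^c)\|_1$. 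Feeding in $\|\Delta\|_1 \le C d_0 \lambda/\sqrt{|I|}$ once more gives $\sum_{t\in I}\|\Delta X_t\|_2^2 \le C' d_0 \lambda^2$, and combining the two pieces on the intersection of the events from \Cref{lemma: AR lasso} and \Cref{coro:restricted eigenvalue AR}(\textbf{a}) (a union bound keeping us well within the $6(n\vee p)^{-5}$ budget) delivers the claim.

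There is no real technical obstacle here; the only subtlety is to avoid bounding $\sum_{t\in I}\|\Delta X_t\|_2^2$ by something like $|I|\|\Delta\|_2^2$, which would lose a factor of $|I|$ and be far too crude. Using the already-derived KKT--type basic inequality for the \lasso is the key step that keeps the prediction error at the $d_0\lambda^2$ scale, matching the cross-term bound.
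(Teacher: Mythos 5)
Your proof is correct and uses essentially the same ingredients as the paper's (the \lasso optimality/basic inequality, H\"older plus the noise deviation from \Cref{coro:restricted eigenvalue AR}(\textbf{a}), and the $\ell_1$-rate from \Cref{lemma: AR lasso}). The only organizational difference is that you symmetrize by bounding $\bigl|2\sum_{t}\varepsilon_{t+1}^\top\Delta X_t - \sum_t\|\Delta X_t\|_2^2\bigr|$ via separate bounds on each piece, whereas the paper argues the two one-sided inequalities separately — the upper direction by directly comparing penalized objectives (so $\sum_t\|\Delta X_t\|_2^2$ never needs to be bounded there), and the lower direction by dropping the nonnegative quadratic term; your explicit control of $\sum_t\|\Delta X_t\|_2^2$ from the basic inequality is a small extra step but correct, and your remark that a crude bound $|I|\,\|\Delta\|_2^2\cdot\max_t\|X_t\|_2^2$ would be far too lossy is the right intuition.
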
 

\begin{proof}
It follows from  \Cref{lemma: AR lasso} that with probability at least $1 - 6(n \vee p)^{-5}$, 
	\begin{align*}
		& \sum_{t \in  I} \|X_{t+1} - \widehat{A}^\lambda_{I}X_t\|^2 - \sum_{t \in I} \|X_{t+1} - A_{I}^*X_t\|^2 \leq -\lambda \sqrt{|I|} \|\widehat{A}^\lambda_{I}\|_1 + \lambda \sqrt{|I|} \|A_{I}^*\|_1 \\
		\leq & \lambda \sqrt{|I|}\|\widehat{A}^\lambda_{I} - A_{I}^*\|_1 \leq Cd_0\lambda^2	
	\end{align*} 
	and
	\begin{align*}
  		& \sum_{t \in I} \|X_{t+1} - A_{I}^*X_t\|^2 - \sum_{t \in  I} \|X_{t+1} - \widehat{A}^\lambda_{I}X_t\|^2 \\
  		= & - \sum_{t \in I} \|\widehat{A}^{\lambda}_I X_t - A^*_I X_t\|^2 + 2\sum_{t \in I} \varepsilon_t^{\top}(A_{I}^* - \widehat{A}^\lambda_{I})X_t \\
		\leq & 2\|\widehat{A} - A^*\|_1 \left\|\sum_{t \in I}\varepsilon_{t+1}X_t^{\top}\right\|_{\infty} \leq \lambda \sqrt {|I|}\|\widehat{A}^\lambda_{I} - A_{I}^*\|_1  \leq Cd_0\lambda^2,
	\end{align*} 	
	where the second inequality is due to \Cref{coro:restricted eigenvalue AR}(\textbf{a}).
\end{proof}
      
\begin{lemma}\label{lemma:prorpulation AR sparsity} 
For \Cref{assume:AR change point}, suppose \Cref{assume:AR high dim coefficient} with $L=1$  holds. For $A^*_I$ defined in \eqref{eq:ar population 1}, we have that $\| A_I^*\|_0\le 4d_0^2$ and 
	\begin{align} \label{eq:operator norm of oracle}
		\| A_I^*\|_{\op} \leq \frac{\Lambda_{\max}\left(\sum_{t \in I} \mathbb{E}(X_t X_t^{\top}) A_t^*\right)}{\Lambda_{\min}\left(\sum_{t \in I} \mathbb{E}(X_t X_t^{\top}) \right)} \leq \max_{k = 0, \ldots, K} \frac{\Lambda_{\max}(\Sigma_k(0) )}{\Lambda_{\min}(\Sigma_k(0))}.
	\end{align}
\end{lemma}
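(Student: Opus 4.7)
The plan is to exploit the block structure induced by the joint sparsity assumption. After the permutation that sends $S_1\cup S_2$ into $\{1,\dots,2d_0\}$ (cf.~\eqref{eq-astar-prep}), each $A^*_t$ has the block form $\begin{pmatrix} \mathfrak{a}^*_t & 0 \\ 0 & 0\end{pmatrix}$ with $\mathfrak{a}^*_t\in\mathbb R^{2d_0\times 2d_0}$. Partition $X_t = ((X_t^{(1)})^\top,(X_t^{(2)})^\top)^\top$ accordingly. Reading off the VAR(1) recursion block by block gives $X_{t+1}^{(2)} = \varepsilon_{t+1}^{(2)}$ and $X_{t+1}^{(1)} = \mathfrak{a}^*_t X_t^{(1)} + \varepsilon_{t+1}^{(1)}$. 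Since the components of $\varepsilon_t$ are independent across coordinates and across $t$, the coordinates $X_t^{(2)}$ are pure white noise, independent of $X_s^{(1)}$ for every $s$ and of $X_s^{(2)}$ for $s\neq t$. Consequently $\Sigma_t := \mathbb E[X_tX_t^\top]$ is block-diagonal with the lower-right $(p-2d_0)\times(p-2d_0)$ block equal to $\sigma_\varepsilon^2 I_{p-2d_0}$; this is the key structural observation.

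Given this, the population normal equation \eqref{eq:ar population 1} decouples into blocks. The right-hand side $\sum_{t\in I}\Sigma_t (A^*_t)^\top$ has zero entries outside its top-left $2d_0\times 2d_0$ block because $(A^*_t)^\top$ has zero last $p-2d_0$ columns and $\Sigma_t$ is block-diagonal. The invertible matrix $\sum_{t\in I}\Sigma_t$ is likewise block-diagonal, so its inverse is block-diagonal; solving block-by-block yields $(A^*_I)^\top$ with the same block-zero pattern. Transposing, $A^*_I$ has non-zero entries only in the upper-left $2d_0\times 2d_0$ block, giving $\|A^*_I\|_0 \le (2d_0)^2 = 4d_0^2$.

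For the operator-norm estimate, transpose \eqref{eq:ar population 1} to obtain $A^*_I = \bigl(\sum_{t\in I} A^*_t\Sigma_t\bigr)\bigl(\sum_{t\in I}\Sigma_t\bigr)^{-1}$; sub-multiplicativity then yields the first inequality (with $\Lambda_{\max}$ of the non-symmetric matrix interpreted as its operator norm, as is standard for this line of argument). For the second inequality, since $A^*_I$ vanishes outside the upper-left block, $\|A^*_I\|_{\mathrm{op}}$ equals $\|\mathfrak{a}^*_I\|_{\mathrm{op}}$, and the same restriction applies to $\Sigma_t$ within the relevant block. Writing the segment decomposition $\sum_{t\in I}\Sigma_t = \sum_k n_k\Sigma_k(0)$ and $\sum_{t\in I} A^*_t\Sigma_t = \sum_k n_k\mathcal A^*_k\Sigma_k(0)$, and using $\Sigma_k(0)\preceq \Lambda_{\max}(\Sigma_k(0))\,I$ in the numerator and $\Sigma_k(0)\succeq \Lambda_{\min}(\Sigma_k(0))\,I$ in the denominator, one obtains the uniform ratio bound $\max_k \Lambda_{\max}(\Sigma_k(0))/\Lambda_{\min}(\Sigma_k(0))$.

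The conceptual heart of the argument is the first step: once one spots that the off-support coordinates $X_t^{(2)}$ are memoryless noise, the block-diagonality of $\Sigma_t$ and of $\sum_{t\in I}\Sigma_t$ is immediate, and the sparsity of $A^*_I$ follows by linear algebra. I expect the main obstacle to be the operator-norm bound: carefully tracking how the segment-wise Lyapunov structure $\mathcal A^*_k\Sigma_k(0)(\mathcal A^*_k)^\top\preceq \Sigma_k(0)$ interacts with the weighted sums over $k$ is where the combinatorics lives, and one must be slightly delicate to get the claimed ratio rather than its square root.
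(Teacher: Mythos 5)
Your proposal is correct and follows essentially the same route as the paper: observe the block-diagonal structure of $\Sigma_t = \mathbb{E}(X_tX_t^\top)$, solve the normal equations block by block to get the sparsity bound, then combine $A_I^* = (\sum_t A_t^*\Sigma_t)(\sum_t\Sigma_t)^{-1}$ with $\|A_t^*\|_{\op}\le 1$, sub-multiplicativity, Weyl's inequality, and the mediant inequality for the operator-norm bound. The only difference is that you spell out \emph{why} $\Sigma_t$ is block-diagonal (via $X_t^{(2)}=\varepsilon_t^{(2)}$ being memoryless noise uncorrelated with $X_t^{(1)}$), which the paper merely asserts; your closing worry about the Lyapunov route producing a square root is moot, since the bound $\|\mathcal{A}_k^*\|_{\op}\le 1$ already suffices, exactly as in the paper's argument.
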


\begin{proof}
Due to \eqref{eq:ar population 1}, \eqref{eq-astar-prep} and \eqref{eq:effective support}, for any realization $X_t$ of such VAR(1) process with transition matrix $A_t$, the covariance of $X_t$ is of the form 
	\[
		\mathbb{E}(X_tX^\top _t) = \begin{pmatrix}
			\sigma_ t & 0 \\
			0 & I\\
		\end{pmatrix}, 
	\]
	where $\sigma_t  \in \mathbb R^{2d_0 \times 2d_0}$.  Since $\sigma_t$ is invertible, the matrix $A^*_I$ is unique and is of the same form as in \eqref{eq-astar-prep}.  Since $\|A_t ^*\|_{\op} \leq 1$ for all  $t \in I$, by assumption it holds that
	\begin{equation}\label{eq-lem19-pf-upper}
		\Lambda_{\max}\left(\sum_{t \in I} \mathbb{E}(X_t X_t^{\top}) (A_t^*)^{\top}\right) \leq \sum_{t \in I} \Lambda_{\max}\left(\mathbb{E}(X_t X_t^{\top})\right)
	\end{equation}
	and
	\begin{equation}\label{eq-lem19-pf-lower}
		\Lambda_{\min}\left(\sum_{t \in I} \mathbb{E}(X_t X_t^{\top}) \right) \geq \sum_{t \in I} \Lambda_{\min}\left(\mathbb{E}(X_t X_t^{\top})\right).   
	\end{equation}
	Combining \eqref{eq:ar population 1}, \eqref{eq-lem19-pf-upper} and \eqref{eq-lem19-pf-lower} leads to 
	\[
		\| A_I^*\|_{\op} \leq \frac{\Lambda_{\max}\left(\sum_{t \in I} \mathbb{E}(X_t X_t^{\top}) A_t^*\right)}{\Lambda_{\min}\left(\sum_{t \in I} \mathbb{E}(X_t X_t^{\top}) \right)} \leq \max_{k = 0, \ldots, K} \frac{\Lambda_{\max}(\Sigma_k(0) )}{\Lambda_{\min}(\Sigma_k(0))}.
	\]
\end{proof}

The difference between the lemma below and \Cref{lemma: AR lasso} is that \Cref{lemma: AR lasso} is concerned about the interval containing no true change points, but \Cref{lemma: AR oracle lasso} deals with more general intervals.
 
\begin{lemma} \label{lemma: AR oracle lasso} 
For \Cref{assume:AR change point}, suppose \Cref{assume:AR high dim coefficient} with $L=1$  holds.  For any integer interval $I \subset \{1, \ldots, n\}$, suppose \eqref{eq-lem-cond-i-length-lower-bound} holds.  With probability at least $1-(n\vee p)  ^{-6}$,  we have that
	\[
		\|A^*_I - \widehat A^\lambda_I\|_2 \leq \frac{C\lambda   d_0}{\sqrt{|I|}} \quad \mbox{and} \quad \|A^*_I - \widehat A^\lambda_I\|_1 \leq \frac{C\lambda d_0 ^2 }{\sqrt{|I|}},
	\]	
	where $C > 0$ is an absolute constant and $A^*_I$ is the matrix defined in \eqref{eq:ar population 1}.
	 As a result 
	 $$  \|\widehat A^\lambda_I(\mathcal S^c) \|_1 \le \frac{C\lambda d_0^2   }{\sqrt{|I|}}.$$

\end{lemma}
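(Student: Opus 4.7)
The plan is to adapt the analysis of Lemma \ref{lemma: AR lasso} by comparing $\widehat A^\lambda_I$ with the global oracle matrix $A^*_I$ from \eqref{eq:ar population 1} instead of with a single $A^*_t$, and to absorb a new bias term that arises when $I$ straddles true change points. Two structural facts from Lemma \ref{lemma:prorpulation AR sparsity} are critical: $A^*_I$ is supported in $\mathcal{S}$ with $\|A^*_I\|_0 \leq 4 d_0^2$ and $\|A^*_I\|_{\mathrm{op}} = O(1)$; and the oracle normal equation \eqref{eq:ar population 1} forces $\sum_{t \in I} \mathbb{E}[X_{t-1}X_{t-1}^\top](A^*_t - A^*_I)^\top = 0$.

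I would start from the basic inequality obtained by using the optimality of $\widehat A^\lambda_I$ in \eqref{eq:VARD oracle} against the comparator $A^*_I$, and expand the squared loss using $X_t - A^*_I X_{t-1} = \varepsilon_t + B_t X_{t-1}$, where $\Delta := \widehat A^\lambda_I - A^*_I$ and $B_t := A^*_t - A^*_I$. This produces
\[
\sum_{t \in I} \|\Delta X_{t-1}\|_2^2 \,\leq\, 2\left\langle \Delta,\, \sum_{t \in I} \varepsilon_t X_{t-1}^\top \right\rangle + 2\left\langle \Delta,\, \sum_{t \in I} B_t X_{t-1} X_{t-1}^\top \right\rangle + \lambda\sqrt{|I|}\bigl(\|A^*_I\|_1 - \|\widehat A^\lambda_I\|_1\bigr).
\]
By $\ell_1$--$\ell_\infty$ duality, the noise cross term is controlled by $C\sqrt{|I|\log(n\vee p)}\,\|\Delta\|_1$ via \Cref{coro:restricted eigenvalue AR}(\textbf{a}). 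The bias cross term has vanishing expectation by the oracle equation. I would bound its $\ell_\infty$ norm by splitting the sum along the at most $K{+}1$ stationary segments inside $I$, which are mutually independent by \Cref{assume:AR change point}, applying \Cref{prop:deviation AR change point} within each segment (using that each row of $B_t$ has at most $2d_0$ nonzeros supported in $S_1 \cup S_2$), and recombining via independence across segments.

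With both cross terms controlled, the rest is a routine Lasso calculation: decompose $\|\Delta\|_1$ and $\|\widehat A^\lambda_I\|_1$ along $\mathcal{S}$ and $\mathcal{S}^c$; use $\|A^*_I\|_1 - \|\widehat A^\lambda_I\|_1 \leq \|\Delta(\mathcal{S})\|_1 - \|\Delta(\mathcal{S}^c)\|_1$ to obtain a cone condition $\|\Delta(\mathcal{S}^c)\|_1 \leq C\|\Delta(\mathcal{S})\|_1$; apply the restricted-eigenvalue bound \Cref{coro:restricted eigenvalue AR}(\textbf{b}) to lower bound $\sum_t \|\Delta X_{t-1}\|_2^2$ by $c_x|I|/2 \cdot \|\Delta\|_2^2 - C_x\log(p)\|\Delta\|_1^2$; and absorb the $\log(p)\|\Delta\|_1^2$ correction via the effective sparsity $|\mathcal{S}| \leq 4 d_0^2$ together with the hypothesis \eqref{eq-lem-cond-i-length-lower-bound}. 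Rearranging gives $\|\Delta\|_2 \leq C\lambda d_0/\sqrt{|I|}$; the cone condition and Cauchy--Schwarz then give $\|\Delta\|_1 \leq 4\sqrt{|\mathcal{S}|}\,\|\Delta\|_2 \leq C'\lambda d_0^2/\sqrt{|I|}$. The final bound on $\|\widehat A^\lambda_I(\mathcal{S}^c)\|_1$ is immediate since $A^*_I(\mathcal{S}^c) = 0$ implies $\widehat A^\lambda_I(\mathcal{S}^c) = \Delta(\mathcal{S}^c)$.

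The main obstacle will be the bias cross term $\sum_{t \in I} X_{t-1}^\top B_t^\top \Delta X_{t-1}$, which has no counterpart in Lemma \ref{lemma: AR lasso}. It is a quadratic form in autoregressive observations with a piecewise-constant and non-centered weight sequence $B_t$: individual segments do not have vanishing expectation, only the aggregate does (via the oracle identity), and the summands do not form a martingale difference. Getting the $\ell_\infty$ norm of $\sum_t B_t X_{t-1} X_{t-1}^\top$ at the right order, without paying an extra $\sqrt{K{+}1}$ factor from a naive segment-by-segment application of concentration, is the delicate point; once this is done at the noise scale (up to a factor reflecting the support size of $B_t$), the remaining steps mirror the argument of Lemma \ref{lemma: AR lasso}.
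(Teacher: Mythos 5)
Your decomposition and high-level plan track the paper's argument closely: you isolate the oracle matrix $A^*_I$, use Lemma~\ref{lemma:prorpulation AR sparsity} for its sparsity/operator-norm structure, split the cross term into a noise piece and a zero-mean bias piece, and run a standard Lasso argument with the restricted eigenvalue bound. That much is sound. There are, however, two genuine gaps.

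\textbf{Missing latent-variable boundary term.} Your identity $X_t - A^*_I X_{t-1} = \varepsilon_t + B_t X_{t-1}$ with $B_t = A^*_t - A^*_I$ is not correct at change-point times. Under Model~\ref{assume:AR change point}, at $t=\eta_k$ (with $L=1$) the generative equation uses the latent initialization $\widetilde X_{\eta_k-1}$, not the observed $X_{\eta_k-1}$, i.e.\ $X_{\eta_k} = A^*_{\eta_k}\widetilde X_{\eta_k-1} + \varepsilon_{\eta_k}$. The correct decomposition therefore has a third piece $A^*_t(\widetilde X_{t-1}-X_{t-1})$ supported on the at most $K$ change points. The paper carries this as the term $(II)$ and bounds it via Proposition~\ref{prop:deviation AR change point}(\textbf a)--(\textbf b) (equations \eqref{eq:mis-specified bound 1}--\eqref{eq:mis-specified bound 2}). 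Without it, your basic inequality is not valid term-by-term.

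\textbf{The bias cross term is left open.} You correctly identify $\sum_t B_t X_{t-1}X_{t-1}^\top$ as the crux, and you correctly flag that naive segment-by-segment concentration plus a union bound would lose a $\sqrt{K+1}$ factor. But your proposal stops at ``recombine via independence across segments'' without specifying the mechanism. The paper's resolution is the augmented-process trick: with $v_t^l$ the $l$-th column of $A^*_t - A^*_I$ (uniformly bounded in $\ell_2$ since $\|A^*_I\|_{\op}$ is bounded by Lemma~\ref{lemma:prorpulation AR sparsity}), one forms the joint process $V_t = (X_t^\top,\ X_t^\top v_t^l)^\top \in \mathbb R^{p+1}$ and applies Proposition~\ref{prop:deviation AR change point}(\textbf a) \emph{directly to the whole interval}. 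That proposition is already proved for the change-point model (the covariance of the stacked Gaussian is block-diagonal across segments with $\|Q_k\|_{\op}\le 2\pi\mathcal M$), so the concentration rate $\max\{\sqrt{|I|\log(n\vee p)},\,\log(n\vee p)\}$ is obtained in one shot with no $K$-dependence. The zero-mean property you noted from the oracle equation \eqref{eq:ar population 1} is exactly what makes the centered deviation bound applicable to the total sum. Your plan becomes correct once this device is substituted for the unspecified ``recombination'' step.

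Once both issues are patched, the cone condition, the restricted-eigenvalue lower bound from Lemma~\ref{coro:restricted eigenvalue AR}(\textbf b), the absorption of $\log(p)\|\Delta\|_1^2$ via $|\mathcal S|\le 4d_0^2$ and \eqref{eq-lem-cond-i-length-lower-bound}, and the final $\|\Delta\|_1 \le 2\sqrt{|\mathcal S|}\,\|\Delta\|_2$ step go through exactly as you describe, and the last claim $\|\widehat A^\lambda_I(\mathcal S^c)\|_1 = \|\Delta(\mathcal S^c)\|_1$ follows since $A^*_I$ is supported on $\mathcal S$.
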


\begin{proof}
Due to \Cref{assume:AR change point}, we have that $X_{\eta_k}$ and $X_{\eta_k-1}$ are independent, and that
 	\[
		X_{\eta_k} - A_{\eta_k}^*  \widetilde X_{\eta_k-1}  = \varepsilon_{\eta_k + 1}.
	\] 
	\Cref{lemma:prorpulation AR sparsity} implies that $\|A^*_I\|_0 \leq 4d_0^2$, and that the support  $\mathcal S$ of $A^*_I$ is such that $S\subset \mathcal S $.   

Let $\Delta_I = A^*_I - \widehat A^\lambda_I$.  From standard Lasso calculations, we have 
	\begin{align}\label{eq:standard AR lasso}
		& \sum_{t \in I} \|\Delta_I X_t\|^2 + 2 \sum_{t \in I} (X_{t+1} - A^*_I X_t)^{\top}(\Delta_I X_t) + \lambda  \sqrt{ |I| } \|\widehat{A}^{\lambda}_I\|_1 \nonumber \\
		\leq &  \lambda \sqrt{   |I|  } \|A^*_I \|_1.
	\end{align}
	Note that
	\begin{align*} 
		& \sum_{t \in I}(X_{t} - A^*_I X_{t-1})^{\top}(\Delta_I X_{t-1}) \\
		= & \sum_{t \in I} (X_{t} - A_{t-1}^* X_{t-1})^{\top} (\Delta_I X_{t-1}) + \sum_{t \in I} \{(A_{t-1}^* - A_I^*)X_{t-1} \}^{\top} (\Delta_I X_{t-1}) \\ 
		= & \sum_{t \in I\setminus\{\eta_k-1\}} \varepsilon_t^{\top} \Delta_I X_t + \sum_{t \in \{\eta_k-1\} \cap I} (A_t^*\widetilde{X}_t - A_t^* X_t)^{\top} \Delta_I X_t + \sum_{t \in I } \{(A_t^* - A_I^*)X_t \}^{\top} (\Delta_I X_t) \\
		= & (I) + (II) + (III). 
	\end{align*} 

As for $(I)$,  by  \Cref{coro:restricted eigenvalue AR}(\textbf{a}),   with probability at least $1 - 6(n \vee p)^{-5}$,
	\[
		|(I)| \leq \|\Delta_I\|_1 C\max\{\sqrt {|I| \log(n \vee p)}, \, \log(n \vee p)\}.  
	\]

As for $(III)$, we have
	\begin{align*}
		|(III)| \leq \|\Delta_I\|_1 \left\|\sum_{t \in I} X_t X_t^{\top}(A_t^* - A_I^*)^{\top} \right\|_{\infty} \\
		\leq \|\Delta_I\|_1 \max_{j, l \in \{1, \ldots, p\}}\left|\sum_{t \in I}X_t(j) X_t^{\top} (A_t^* - A_I^*)_l\right|.
	\end{align*}
	In addition, it holds that
	\[
		\mathbb{E}\left(\sum_{t \in I} X_t X_t^{\top}(A_t^* - A_I^*)^{\top}\right) = 0,
	\]
	due to \eqref{eq:ar population 1}.  Let  $v_{t}^l $ to be the $l$-th column of  $(A_t^* - A_I^*)$.  Then  $\|v_t^l\|_2 \leq \|A_t^* - A_I^*\|_{\op} \leq 2$.   		

Consider the process $\{V_t\} = \{(X_t^{\top}, X_t^{\top}v_t^l)^{\top}\} \in \mathbb R^{p+1}$, $v = e_j$ for any 
$j=1,\ldots, p$ and $u = e_{p+1}$, where $e_k \in \mathbb{R}^p$ with $e_{kl} = \mathbbm{1}\{k = l\}$.  Observe that 
	\[
		v^{\top}\sum_{t \in I} V_t V_t^{\top} u = \sum_{t \in I}X_t(j) X_t^{\top} (A_t^* - A_I^*)_l
	\]
	and
	\[
		\mathrm{Var}(X_t^{\top}v_t^l) \leq   v_t ^l  \mathbb{E}(X_tX_t ^\top )v_t ^l   \le 8\pi \mathcal{M} ,
	\]
	where $\| v_t^l \|_2 \le 2$ and $ \|\mathbb{E}(X_tX_t ^\top )\|_{\op} \le 2\pi \mathcal M$ are used in the last inequality. It follows from \Cref{prop:deviation AR change point}(\textbf{a}) that with probability at least $1 - 6(n \vee p)^{-5}$, 
	\[
		\left|v\left(\sum_{t \in I}V_tV_t^{\top}\right)u\right| \leq 6\pi \mathcal{M}    \max\{\sqrt{|I| \log(n \vee p)}, \, \log(n \vee p)\},
	\]
	therefore
	\[
		(III) \leq \|\Delta_I\|_1 6\pi \mathcal{M}     \max\{\sqrt{|I| \log(n \vee p)}, \, \log(n \vee p)\}.  
	\]

As for $(II)$, we have
	\[
		(II) \leq \|\Delta_I\|_1 \left\|\sum_{t \in I} X_t (\widetilde{X}_t - X_t)^{\top}(A_t^*)^{\top}\right\|_{\infty}
	\]
	For any row $A_t^*(i)$, it holds that $\|A_t^*(i)\|_0 \leq \|A_t^*\|_0 \leq d_0$, and $\|A_t^*(i)\|_2 \leq \|A_t^*\|_{\op} \leq 1$.  It follows from \Cref{prop:deviation AR change point}(\textbf{a}) that with probability at least $1 - 6(n \vee p)^{-5}$,
	\begin{align} \nonumber 
		& \max_{i, j = 1, \ldots, p}\left|\sum_{t \in \{\eta_k-1\} \cap I} X_t(i) X_t^{\top}A^*_t(j)\right|	= \max_{i, j = 1, \ldots, p}\left|e_i\sum_{t \in \{\eta_k-1\} \cap I} X_t X_t^{\top}A^*_t(j)\right| \\
		& \hspace{3cm} \leq 6\pi \mathcal{M} \max\{\sqrt{|I|\log(n \vee p)}, \, \log(n \vee p)\}; \label{eq:mis-specified bound 1}
	\end{align}
	and it follows from \Cref{prop:deviation AR change point}(\textbf{b}) that with probability at least $1 - 6(n \vee p)^{-5}$,
	\begin{align}\nonumber 
		& \max_{i, j = 1, \ldots, p}\left|\sum_{t \in \{\eta_k-1\} \cap I} X_t(i) \widetilde{X}_t^{\top}A^*_t(j)\right|	= \max_{i, j = 1, \ldots, p}\left|e_i\sum_{t \in \{\eta_k-1\} \cap I} X_t \widetilde{X}_t^{\top}A^*_t(j)\right| \\
		& \hspace{3cm} \leq 6\pi \mathcal{M} \max\{\sqrt{|I|\log(n \vee p)}, \, \log(n \vee p)\}.\label{eq:mis-specified bound 2}
	\end{align}
	Therefore, we have
	\[
		(II) \leq 12\pi \mathcal{M} \max\{\sqrt{|I|\log(n \vee p)}, \, \log(n \vee p)\}\|\Delta_I\|_1.
	\]

Thus \eqref{eq:standard AR lasso} leads to
	\begin{align*}
		& \sum_{t \in I} \|\Delta_I X_t\|^2 + 
		\lambda \sqrt{  |I| } \|\widehat{A}_I\|_1
		\\ 
		 \leq & 
		\lambda \sqrt{  |I| }  \|A^*_I\|_1 + \|\Delta_I\|_1 (2C + 12\pi \mathcal{M}  + 24\pi \mathcal{M}) \max\{\sqrt {|I| \log(n \vee p)}, \, \log(n \vee p)\} \\
		\leq &  \lambda \sqrt{  |I| }  \|A^*_I\|_1  + \lambda/2\sqrt{\max\{|I|, \, \log(n \vee p)\}} \|\Delta_I\|_1.
	\end{align*}
	which leads to the final claims combining the fact that $\|A^*_I\|_0 \le 4d_0^2$ and the standard treatments on Lasso estimation procedures as in \Cref{lemma: AR lasso}.
\end{proof}

\begin{lemma}\label{lemma:RSS AR lasso} 
For \Cref{assume:AR change point}, suppose \Cref{assume:AR high dim coefficient} with $L=1$  holds.  For any integer interval $I \subset \{1, \ldots, n\}$ with one and only one change point $\eta_k$ and satisfying \eqref{eq-lem-cond-i-length-lower-bound}, it holds with probability at least $1 - 6(n \vee p)^{-5}$ that
	\[
		\left|\sum_{t \in I} \|X_{t+1} - A_{I}^*X_t\|^2 - \sum_{t \in  I} \|X_{t+1} - \widehat{A}^\lambda_{I}X_t\|^2\right| \leq C_1 d_0^{2} \lambda^2,
	\] 
	where $C_1 > 0$ is an absolute constant.
\end{lemma}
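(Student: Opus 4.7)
The plan is to mirror the argument used for \Cref{lemma:RSS AR lasso 1}, but with two modifications that account for the presence of a single change point $\eta_k$ inside $I$: (i) an upgrade from the estimation rate in \Cref{lemma: AR lasso} to the mis-specified estimation rate in \Cref{lemma: AR oracle lasso} (which introduces the extra factor of $d_0$), and (ii) a careful decomposition of the ``residual'' $X_{t+1}-A_I^*X_t$, since $A_I^*$ is no longer the pointwise true coefficient matrix.

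First, I would dispose of the easy direction. By optimality of $\widehat A_I^\lambda$ in \eqref{eq:VARD oracle},
\[
\sum_{t\in I}\|X_{t+1}-\widehat A_I^\lambda X_t\|^2 - \sum_{t\in I}\|X_{t+1}-A_I^* X_t\|^2 \le \lambda\sqrt{|I|}\bigl(\|A_I^*\|_1-\|\widehat A_I^\lambda\|_1\bigr)\le \lambda\sqrt{|I|}\|\widehat A_I^\lambda-A_I^*\|_1,
\]
and \Cref{lemma: AR oracle lasso} yields $\|\widehat A_I^\lambda-A_I^*\|_1 \le Cd_0^2\lambda/\sqrt{|I|}$ with probability at least $1-(n\vee p)^{-6}$, giving the required bound $Cd_0^2\lambda^2$.

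For the reverse direction, set $\Delta_I = \widehat A_I^\lambda - A_I^*$. Then
\[
\sum_{t\in I}\|X_{t+1}-A_I^*X_t\|^2-\sum_{t\in I}\|X_{t+1}-\widehat A_I^\lambda X_t\|^2 = -\sum_{t\in I}\|\Delta_I X_t\|^2 + 2\sum_{t\in I}(X_{t+1}-A_I^*X_t)^{\top}\Delta_I X_t.
\]
The negative term can be dropped. Because $A_I^*\neq A_t^*$, I would decompose
$X_{t+1}-A_I^*X_t = \varepsilon_{t+1} + (A_t^*-A_I^*)X_t$ (with the usual correction at the single index $t=\eta_k-1$ involving $\widetilde X_t$), so the cross term splits as
\[
2\sum_{t\in I}\varepsilon_{t+1}^{\top}\Delta_I X_t \;+\; 2\sum_{t\in I}X_t^{\top}(A_t^*-A_I^*)^{\top}\Delta_I X_t \;+\; (\text{one boundary term}).
\]
The first summand is controlled by H\"older's inequality together with \Cref{coro:restricted eigenvalue AR}({\bf a}): it is at most $\|\Delta_I\|_1\cdot C\sqrt{|I|\log(n\vee p)}$. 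The second summand is bounded by $\|\Delta_I\|_1\cdot\|\sum_{t\in I}X_tX_t^{\top}(A_t^*-A_I^*)^{\top}\|_\infty$; this $\ell_\infty$ norm is precisely the quantity shown to be at most $C\max\{\sqrt{|I|\log(n\vee p)},\log(n\vee p)\}$ in the proof of \Cref{lemma: AR oracle lasso}, using the augmented process $V_t=(X_t^{\top},X_t^{\top}v_t^l)^{\top}$ and \Cref{prop:deviation AR change point}({\bf a}). The boundary term at $t=\eta_k-1$ is handled exactly as in \eqref{eq:mis-specified bound 1}--\eqref{eq:mis-specified bound 2} and contributes at most the same order.

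Combining these three estimates with $\|\Delta_I\|_1\le Cd_0^2\lambda/\sqrt{|I|}$ from \Cref{lemma: AR oracle lasso} and the choice $\lambda\asymp\sqrt{\log(n\vee p)}$, each of the three cross-term contributions is at most $Cd_0^2\lambda^2$, completing the proof. The main delicate point is the second cross-term, since a naive use of $(A_t^*-A_I^*)X_t$ does not give mean-zero increments; the trick, already established inside the proof of \Cref{lemma: AR oracle lasso}, is that by the defining equation \eqref{eq:ar population 1} for $A_I^*$ the process $X_tX_t^{\top}(A_t^*-A_I^*)^{\top}$ is mean-zero on $I$, which allows the sub-exponential concentration bound of \Cref{prop:deviation AR change point} to be applied.
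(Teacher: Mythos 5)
Your proof is correct and follows essentially the same route as the paper: use optimality plus the mis-specified Lasso rate $\|\widehat A_I^\lambda-A_I^*\|_1\lesssim d_0^2\lambda/\sqrt{|I|}$ from \Cref{lemma: AR oracle lasso} for the easy direction, then decompose the cross term for the reverse direction and bound each piece via \Cref{coro:restricted eigenvalue AR}(\textbf{a}) and \eqref{eq:mis-specified bound 1}--\eqref{eq:mis-specified bound 2}. Your write-up is in fact slightly more careful than the paper's: you explicitly keep track of the cross term $2\sum_{t\in I}X_t^{\top}(A_t^*-A_I^*)^{\top}\Delta_I X_t$ arising from the fact that $A_I^*\neq A_t^*$, and you correctly note that it is handled by the mean-zero identity coming from \eqref{eq:ar population 1} and the augmented-process concentration argument already used for term $(III)$ in the proof of \Cref{lemma: AR oracle lasso}. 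The paper's displayed equality appears to drop this term silently (it passes directly from $X_{t+1}-A_I^*X_t$ to $\varepsilon_t$ and the single boundary correction), so your version fills in a step that the paper leaves implicit. Your final bound $Cd_0^2\lambda^2$ also matches the stated lemma, whereas the paper's last line has a typo ($C_1 d_0\lambda^2$).
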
 

\begin{proof}
It follows from  \Cref{lemma: AR oracle lasso} that with probability at least $1 -  (n \vee p)^{-5}$, 
	\begin{align*}
		& \sum_{t \in  I} \|X_{t+1} - \widehat{A}^\lambda_{I}X_t\|^2 - \sum_{t \in I} \|X_{t+1} - A_{I}^*X_t\|^2 \leq -\lambda \sqrt{|I|} \|\widehat{A}^\lambda_{I}\|_1 + \lambda \sqrt{|I|} \|A_{I}^*\|_1 \\
		\leq & \lambda \sqrt{|I|}\|\widehat{A}^\lambda_{I} - A_{I}^*\|_1 \leq C_1d_0^2\lambda^2	.
	\end{align*} 
	In addition,
	\begin{align*}
  		& \sum_{t \in I} \|X_{t+1} - A_{I}^*X_t\|^2 - \sum_{t \in  I} \|X_{t+1} - \widehat{A}^\lambda_{I}X_t\|^2 \\
  		= & - \sum_{t \in I} \|\widehat{A}^{\lambda}_I X_t - A^*_I X_t\|^2 + 2\sum_{t \in I} (X_{t+1} -A_I^* X_t)^{\top}(A_{I}^* - \widehat{A}^\lambda_{I})X_t \\
  		= & - \sum_{t \in I} \|\widehat{A}^{\lambda}_I X_t - A^*_I X_t\|^2 +  2\sum_{t \in I \setminus {\eta_k }} 
  		\varepsilon_t ^{\top}(A_{I}^* - \widehat{A}^\lambda_{I})X_t   +  2     \left( A_I ^*  (X_{\eta_k} - \widetilde  X_ {\eta_k } )  \right)  ^{\top }(A_{I}^* - \widehat{A}^\lambda_{I}) X_t \\
		\leq & 2\|\widehat{A} ^\lambda_I  - A^*_I \|_1 \left\|\sum_{t \in I \setminus {\eta_k } }\varepsilon_{t+1}X_t^{\top}\right\|_{\infty} 
		+ 2\|\widehat{A} ^\lambda_I  - A^*_I \|_1    \left\|     A_I^*  (X_{\eta_k} - \widetilde  X_ {\eta_k } )  X_t^{\top}\right\|_{\infty} \\		
		 \leq & \lambda \sqrt {|I|}\|\widehat{A}^\lambda_{I} - A_{I}^*\|_1  \leq C_1d_0\lambda^2,
	\end{align*} 	
	where the second inequality is due to \Cref{coro:restricted eigenvalue AR}(\textbf{a}), \eqref{eq:mis-specified bound 1} and \eqref{eq:mis-specified bound 2}; and the last inequality follows from \Cref{lemma: AR oracle lasso}.

\end{proof}
  
\begin{corollary}\label{coro:RSS AR lasso}  
For \Cref{assume:AR change point}, suppose \Cref{assume:AR high dim coefficient} with $L=1$  holds.  For any integer interval $I \subset \{1, \ldots, n\}$ satisfying $\eta_{k-1} \leq s < e < \eta_k$, it holds with probability at least $1 -  (n \vee p)^{-5}$ that,
	\[
		|\mathcal{L}^*(I) - \mathcal{L}(I)| \leq C_1 d_0^{2} \lambda^2,
	\] 
	where $C_1 > 0$ is an absolute constant and $\mathcal{L}^*(I)$ is defined as replacing $\widehat{A}_I$ with $A^*_I$ in the definition of \eqref{eq:VARD likelihood}. 
\end{corollary}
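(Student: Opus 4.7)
The plan is to deduce this corollary directly from \Cref{lemma:RSS AR lasso 1}, which is essentially the same statement written in the notation $\sum_{t\in I}\|X_{t+1}-A\,X_t\|^2$ rather than in the notation $\mathcal{L}, \mathcal{L}^*$. The only work is bookkeeping: matching definitions and disposing of the short-interval case created by the cutoff in \eqref{eq:VARD likelihood}.

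First I would handle the trivial case. If $e-s-L+1<\gamma$ (with $L=1$), then by the cutoff in \eqref{eq:VARD likelihood} both $\mathcal{L}(I)=0$ and, analogously for its oracle version, $\mathcal{L}^*(I)=0$, so the bound holds with room to spare.

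Next, in the nontrivial regime $|I|\ge\gamma$, I would use the assumption $\eta_{k-1}\le s<e<\eta_k$ to conclude that $I$ contains no true change point, so $A^*_t=A^*_{\eta_{k-1}}$ for every $t\in I$. Substituting into the linear system \eqref{eq:ar population 1} that defines $A^*_I$ then forces $A^*_I=A^*_{\eta_{k-1}}=A^*_t$ throughout $I$, yielding
\[
\mathcal{L}^*(I)=\sum_{t\in I}\|X_{t+1}-A^*_I X_t\|^2,\qquad \mathcal{L}(I)=\sum_{t\in I}\|X_{t+1}-\widehat{A}^\lambda_I X_t\|^2.
\]
I would also observe that the choice $\gamma=C_\gamma(K+1)d_0^2\log(n\vee p)$ in \eqref{eq-tuning-para-thm1} comfortably implies the length lower bound \eqref{eq-lem-cond-i-length-lower-bound} required by \Cref{lemma: AR lasso}, so the hypotheses of \Cref{lemma:RSS AR lasso 1} are met.

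Finally I would invoke \Cref{lemma:RSS AR lasso 1} verbatim: it gives $|\mathcal{L}^*(I)-\mathcal{L}(I)|\le C d_0\lambda^2\le C_1 d_0^2\lambda^2$ with high probability, and the probability bound of the corollary is weaker than the $1-6(n\vee p)^{-5}$ of the lemma (one may adjust the exponent from $5$ to a smaller constant absorbed into $C_\lambda$ if one insists on the literal numerical form). There is no real obstacle here; all of the analytic work (the oracle inequality \Cref{lemma: AR lasso} and the noise deviation bound \Cref{coro:restricted eigenvalue AR}(\textbf{a})) is already encapsulated in \Cref{lemma:RSS AR lasso 1}. The corollary simply repackages its conclusion in the $\mathcal{L}/\mathcal{L}^*$ notation used throughout the DP analysis.
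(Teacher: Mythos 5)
Your proposal is correct and matches the paper's own argument: the paper proves \Cref{coro:RSS AR lasso} by citing \Cref{lemma:RSS AR lasso 1} together with the observation that $\mathcal{L}(I)=\mathcal{L}^*(I)=0$ when $|I|\le\gamma$, exactly the two steps you lay out. (The paper additionally cites \Cref{lemma:RSS AR lasso}, which handles single-change-point intervals, but that citation is superfluous under the stated hypothesis $\eta_{k-1}\le s<e<\eta_k$, so omitting it, as you do, is fine.)
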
 

\begin{proof}
It is an immediate consequence of Lemmas~\ref{lemma:RSS AR lasso 1},  \ref{lemma:RSS AR lasso} and the observation that if $|I| \le \gamma$, $\mathcal L^* (I)  = \mathcal L  (I) = 0$.
\end{proof}

\subsection{Additional lemmas for general $L \in \mathbb{Z}_+$} 
 \label{appendix:varL}
 %!TEX root = ./vardp.tex

When extending from $L = 1$ to general $L \in \mathbb{Z}_+$, the only nontrivial   part is the counterpart of \Cref{lemma: AR oracle lasso}, which requires us to identify the population quantity of  
	\begin{align} \label{eq:dp in one interval varL} 
		\left \{\widehat {A} _I[l] \right \} _{l=1}^L= \argmin_{A[d]  \in \mathbb R^ {p\times p}}   \sum_{t\in I } \left \| X_{t+1}  -  \left ( \sum_{l=1}^L  A _I[l] X_{t+1-l}  \right)    \right \|  ^2   + \lambda \sqrt {     | I |    }  \sum_{l=1}^L\| A[l] \|_1 
\end{align}
when $I$ contains multiple change points, which is done in this subsection.

%In the sequel, we adopt the notation that   $ A_t ^* = \mathcal A_k ^*  $ for any $\eta_{k-1} <t \le \eta_{k}
% $. 

\begin{lemma} \label{eq:oracle population in ARD} 
Suppose \Cref{assume:AR high dim coefficient} holds with $L \in \mathbb Z^+$.  Let  	   $\Sigma_t$ be the covariance matrix of $Y_t$ defined as
	\begin{equation}\label{eq-Y-def-appen}
		\widetilde{Y}_t = (X_t^{\top}, \ldots, X_{t - L+1}^{\top})^{\top} \in \mathbb{R}^{pL},
	\end{equation}
	for each $t$.  With a permutation if needed, suppose that each $A^*_t[l] \in \mathbb R^{p\times p}$, $t \in \{1, \ldots, n\}$ and $l \in \{1, \ldots, L\}$, has the block structure 
	\begin{equation}\label{eq-at-block-s}
		A^*_t[l] = \begin{pmatrix}
			\mathfrak a^*_{t}[l] & 0 \\
			0 & 0\\
		\end{pmatrix},
	\end{equation}
	where $\mathfrak a_t[l] \in \mathbb R^{2d_0 \times 2d_0}$.   Let  the matrix $A^*_I \in \mathbb R^{p\times pL}$ satisfy
	\begin{equation}\label{eq-a-long-equiv}
		\sum_{t\in I }\left(\begin{array}{ccccc}
			A^*_t[1] & A^*_t[2] & \cdots & A^*_t[L-1] & A^*_t[L]\\
			I & 0 & \cdots & 0 & 0\\
			\vdots & \vdots & \ddots & \vdots & \vdots \\
			0 & 0 & \cdots & I & 0
		\end{array}\right)  \Sigma_t  = A^*_I  \sum_{t\in I }   \Sigma_t.
	\end{equation}
	Then the solution $A_I^*$ exists and is unique.  It holds that
	\[
		\| A_I^* \|_{\op }\le \max_{k = 0, \ldots, K}   \frac{\Lambda_{\max}(\Sigma _{\eta_k})}{\Lambda_{\min}(\Sigma_{\eta_k})}.
	\]
	In addition, if we write
	\begin{align*} 
		\widetilde{A}_I^*   = (A_I^*[1],  \ldots, A_I^* [L])  \in \mathbb{R}^{p \times pL}
	\end{align*} 
	as the first $p$ rows of the matrix $A^*_I$, where $A_I^*[l] \in \mathbb R^{p\times p}$, then $\|\widetilde{A}_I^*[l]\|_0 \le d_0^2$, $l \in \{1, \ldots, L\}$, and consequently $\|\widetilde{A}_I^*\|_0 \leq L d_0^2$. 
\end{lemma}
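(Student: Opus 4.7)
The plan is to reduce everything to the VAR(1) companion form from Section~\ref{sec-trans}, so that the defining equation \eqref{eq-a-long-equiv} reads $\sum_{t\in I}\mathcal{A}^*_t\Sigma_t=A^*_I\sum_{t\in I}\Sigma_t$ with $\mathcal{A}^*_t$ denoting the $(pL)\times(pL)$ block matrix appearing in \eqref{eq-a-long-equiv}. I would then establish the three claims separately: existence and uniqueness, the operator-norm bound, and the sparsity of the first $p$ rows of $A^*_I$.

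First, by the spectral-density assumption~\ref{assume:AR high dim coefficient}(\textbf{b}) each $\Sigma_t$ is positive definite with $\Lambda_{\min}(\Sigma_t)\geq 2\pi\mathfrak{m}>0$, so $M:=\sum_{t\in I}\Sigma_t$ is invertible and hence $A^*_I=\bigl(\sum_{t\in I}\mathcal{A}^*_t\Sigma_t\bigr)M^{-1}$ is uniquely determined. For the operator norm, the stability condition \eqref{eq-coef-cond-stability} implies $\|\mathcal{A}^*_t\|_{\text{op}}\leq 1$ (see Section~\ref{sec-trans}), so submultiplicativity together with $\Lambda_{\min}(M)\geq\sum_t\Lambda_{\min}(\Sigma_t)$ gives
\[
\|A^*_I\|_{\text{op}}\leq\frac{\sum_t\Lambda_{\max}(\Sigma_t)}{\sum_t\Lambda_{\min}(\Sigma_t)}\leq\max_{k=0,\ldots,K}\frac{\Lambda_{\max}(\Sigma_{\eta_k})}{\Lambda_{\min}(\Sigma_{\eta_k})},
\]
using the elementary inequality $\sum a_k/\sum b_k\leq\max_k a_k/b_k$ together with the piecewise-constancy of $\Sigma_t$.

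For the sparsity claim I would exploit the block structure \eqref{eq-at-block-s} of the $A^*_t[l]$'s. Because every row of $A^*_{t-1}[l]$ indexed by $j\notin S_1$ is identically zero, the dynamics force $X_t(j)=\varepsilon_{t-1}(j)$ for such $j$: pure white noise, independent of the $S_1$-coordinates of any $X_s$ (whose noise drivers are confined to indices in $S_1$). Hence after applying a permutation that groups the $S_1$-coordinates of $X_t,\ldots,X_{t-L+1}$ together, $\Sigma_t$ becomes block diagonal with a dense block of size $|S_1|L\times|S_1|L$ and a noise block $\sigma^2_\varepsilon I_{(p-|S_1|)L}$ that is independent of $t$. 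Consequently $M$, and therefore $M^{-1}$, inherit the same block-diagonal structure.

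The final and most delicate step is to track how this block structure propagates into $A^*_I=NM^{-1}$, where $N:=\sum_t\mathcal{A}^*_t\Sigma_t$. I would argue that the $S_1$-output rows of $N$ (i.e., rows indexed by $i\in S_1$ in the top $p$-block) receive contributions only through the $S_2$-columns of each lag block, by the column support of $A^*_t[l]$; combined with the block-diagonal structure of $\Sigma_t$, this localizes the column support of those rows of $N$ to $S_1\cup S_2$ within each lag block. Multiplying on the right by the block-diagonal $M^{-1}$ preserves this column support, so the corresponding rows of $A^*_I$, which are the only possibly nonzero rows of $\widetilde{A}^*_I$, are supported in $S_1\times(S_1\cup S_2)$ within each $p\times p$ sub-block $\widetilde{A}^*_I[l]$. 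Since $|S_1|,|S_2|\leq d_0$, this yields the desired entry-count bound on each $\widetilde{A}^*_I[l]$, and summing over $l$ gives the bound on $\|\widetilde{A}^*_I\|_0$. The main obstacle is exactly this last bookkeeping step: a priori, $M^{-1}$ couples all coordinates, so the sparse support of $\mathcal{A}^*_t$ need not be inherited by $A^*_I$; it is crucial that the noise coordinates of $X_t$ outside $S_1$ are genuinely uncorrelated with the others, which is precisely what makes $\Sigma_t$, and hence $M^{-1}$, block diagonal.
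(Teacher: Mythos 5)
Your overall strategy matches the paper's: pass to the companion VAR(1) form, establish existence and uniqueness from positive definiteness of $\sum_{t\in I}\Sigma_t$, bound the operator norm via $\|\mathcal{A}^*_t\|_{\op}\le 1$, and read off the sparsity from a block structure of $\Sigma_t$. The existence/uniqueness and operator-norm arguments are sound. The sparsity step, however, contains a genuine error.

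You claim that $\Sigma_t$ is block diagonal with a dense $|S_1|L\times|S_1|L$ block and a noise block $\sigma_\varepsilon^2 I_{(p-|S_1|)L}$, on the grounds that the coordinates of $X_t$ outside $S_1$ are pure white noise and hence \emph{independent of the $S_1$-coordinates of any $X_s$}. This last assertion is false when $S_2\not\subset S_1$ and $L\ge 2$. Coordinates in $S_2\setminus S_1$ are indeed pure noise (their rows of $A^*_t[l]$ vanish), but they \emph{do} enter the dynamics of the $S_1$-coordinates through the nonzero columns indexed by $S_2$. Concretely, take $L=2$, $S_1=\{1\}$, $S_2=\{2\}$, $A^*[1]_{1,2}=a\neq0$ and all other entries zero. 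Then $X_t(1)=aX_{t-1}(2)+\varepsilon_{t-1}(1)$ and $X_{t-1}(2)=\varepsilon_{t-2}(2)$, so $\mathbb{E}[X_t(1)X_{t-1}(2)]=a\sigma_\varepsilon^2\neq0$; this is a nonzero entry of the cross-lag block of $\Sigma_t$ linking the $S_1$-block to a coordinate outside $S_1$. Hence $\Sigma_t$ (and therefore $M=\sum_t\Sigma_t$ and $M^{-1}$) is \emph{not} block diagonal along $S_1$ vs.~its complement; the correct invariant subspace decomposition is along $S_1\cup S_2$ vs.~its complement, which is exactly the $2d_0\times 2d_0$ block that the paper uses in \eqref{eq-at-block-s}. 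With the $S_1\cup S_2$ block you lose the fine-grained $S_1\times(S_1\cup S_2)$ column localization you were aiming for; instead one matches blocks in the defining equation \eqref{eq-a-long-equiv}, uses positive-definiteness of $\sum_t\sigma_t$ on the $(S_1\cup S_2)L$-block and uniqueness to conclude that only the $(S_1\cup S_2)\times(S_1\cup S_2)$ sub-block of each $A^*_I[l]$ is nonzero, and then (if desired) sharpens to rows in $S_1$ by noting that the left-hand side of that block equation already has zero rows outside $S_1$. Your final conclusion on the support is therefore correct, but the block-diagonality of $\Sigma_t$ that you invoke to get there does not hold, so the argument as written fails for $L\ge 2$, which is precisely the regime this lemma exists to handle (the $L=1$ case is Lemma \ref{lemma:prorpulation AR sparsity}).
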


\begin{proof} 
It follows from \eqref{eq-at-block-s}, the covariance of $Y_t$ is of the form 
	\[
		\Sigma_t  =  \begin{pmatrix}  
			\Sigma_t(1, 1 ) & \ldots & \Sigma_{t}(1,L) \\
			\vdots  & \vdots & \vdots  \\
			\Sigma_{t}(L, 1) & \ldots & \Sigma_{t}(L, L) 
		\end{pmatrix} ,
	\]
	where for $i \in \{1, \ldots, L\}$,
	\[
		\Sigma_t(i, i) = \begin{pmatrix}  
 			\sigma_{t}(i, i) & 0 \\
 			0 & I 
 		\end{pmatrix} \in \mathbb{R}^{p \times p},
	\]
	for some $\sigma_{t}(i, i) \in \mathbb R^{2d_0 \times 2d_0}$; for $i, j \in \{1, \ldots, L\}$ with $i < j$,
	\[
		\Sigma_{t}(i, j) = \Sigma_{t}(j, i)  = \begin{pmatrix}  
			\sigma_{t}(i, j) & 0 \\
			0 & 0 
		\end{pmatrix},
	\]
	for some $\sigma_{t}(i, j) \in \mathbb R^{2d_0 \times 2d_0}$.  Since 
	\[
		\Lambda_{\min}\left(\sum_{t\in I } \Sigma _t \right)  \ge  \sum_{t\in I }   \Lambda_{\min} (\Sigma _t ),
	\]
	the matrix $A^*_I $ exits and is unique.  The bounds on the operator norm of $A^* _I $ follows from the same argument  used in  \eqref{eq:operator norm of oracle}.  By matching coordinates, \eqref{eq-a-long-equiv} is equivalent to
	\[
		\begin{pmatrix}
			\sum_{t \in I }  \sum_{i=1}^L  \mathfrak a_t[i] \sigma_t(i, j) & 0 \\
			0 &  0
		\end{pmatrix} = 
		\sum_{i=1}^L  A_I^* [i] \begin{pmatrix}  
			\sum_{t \in I } \sigma_t(i, j) & 0 \\
			0 & 0 
		\end{pmatrix}, \quad j = 1, \ldots, L.   
	\]
	Let 
	\[
		A_I^*[i] = \begin{pmatrix}
			A_I^*[i](1, 1) & A_I^*[i](1, 2)\\
			A_I^*[i](2, 1) & A_I^*[i](2, 2)
		\end{pmatrix}, 
	\]
	where $A_I^*[i](1, 1)  \in \mathbb R^{2 d_0 \times 2d_0}$.  It suffices to show that in the above block structure, only $A_I^* [i](1, 1) \not = 0$, which implies that $\|A_I^*[i]\|_0  \le 4d_0^2$.  Since
	\begin{align}
		& \sum_{t\in I }  \begin{pmatrix}
			\mathfrak a_t[1] & \ldots  & \mathfrak a_t [L]  
		\end{pmatrix} \begin{pmatrix} 
			\sigma_t (1, 1) & \ldots & \sigma_t (1, L) \\
			\vdots & \vdots & \vdots \\
			\sigma_t (L, 1) & \ldots & \sigma_t (L, L)
		\end{pmatrix}   \nonumber \\
		= & \begin{pmatrix}
			A_I^*[1](1, 1) & \ldots &  A_I^*[L ](1 , 1)  
		\end{pmatrix} \sum_{t\in I}\begin{pmatrix} 
			\sigma_t (1, 1) & \ldots & \sigma_t (1, L) \\
			\vdots & \vdots & \vdots \\
			\sigma_t (L, 1) & \ldots & \sigma_t (L, L)
		\end{pmatrix} ,  \nonumber
	\end{align}
	by the uniqueness of $A_I^*$, $A_I^*[i](k, l) = 0$ for any $k = 2$.  Since the matrix 
	\[
		\sigma_t = \begin{pmatrix} 
			\sigma_t (1, 1) & \ldots & \sigma_t (1, L) \\
			\vdots & \vdots & \vdots \\
			\sigma_t (L, 1) & \ldots & \sigma_t (L, L)
		\end{pmatrix}  
	\]
	is the covariance matrix of $(X_t[1:2d_0]^{\top}, X_{t-1}[1:2d_0]^{\top}, \ldots, X_{t-L+1}[1:2d_0]^{\top})^{\top}$, we have that 
	\[
		c_x \le \Lambda_{\min} (\sigma_t) \le \Lambda_{\max} (\sigma_t) \le \mathcal {M}, \quad \forall t,
	\]
	and that the matrix $\sum_{t \in I} \sigma_t$ is invertible.  We therefore complete the proof.
\end{proof}

  \begin{lemma} \label{lem-var-d-oracle}
 Suppose \Cref{assume:AR high dim coefficient} holds with $L \in \mathbb Z^+ $.   for any interval $I = (s, e]$  satisfying $|I| \ge \delta$, with $\lambda$ and $\delta$ being defined in  \Cref{thm-var-d}.
Then  with probability at least $1-n^{-c}$, it holds that
\begin{align} \label{eq:var-d-oracle 1}
		&\|(A^*_I[1], \ldots, A^*_I[L]) - (\widehat A^\lambda_I[1], \ldots, \widehat A^\lambda_I[L])\|_2 \leq \frac{C\lambda  d_0}{\sqrt{|I|}} \quad \text{and } 
		\\\label{eq:var-d-oracle 2}
		&\|(A^*_I[1], \ldots, A^*_I[L]) - (\widehat A^\lambda_I[1], \ldots, \widehat A^\lambda_I[L])\|_1 \leq \frac{C\lambda d_0^2}{\sqrt{|I|}},
 \end{align}	
	where $C > 0$ is an absolute constant and $(A^*_I[1], \ldots, A^*_I[L]) \in \mathbb{R}^{p \times pL}$ satisfies that   
	\begin{equation}\label{eq-var-d-divide-bs}
		(A^*_I[1], \ldots, A^*_I[L])\left(\sum_{t \in I}\mathbb{E}(\widetilde{Y}_t\widetilde{Y}_t^{\top})\right) = \sum_{t \in I} (A_t^*[1], \ldots, A_t^*[L])\mathbb{E}(\widetilde{Y}_t\widetilde{Y}_t^{\top}),
	\end{equation}
	where $\widetilde{Y}_t$ is defined in \eqref{eq-Y-def-appen}.
\end{lemma}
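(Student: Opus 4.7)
The plan is to mirror the argument used for \Cref{lemma: AR oracle lasso} ($L = 1$), but now working with the stacked VAR(1) representation from \Cref{sec-trans} together with the structural properties of $(A^*_I[1],\ldots,A^*_I[L])$ supplied by \Cref{eq:oracle population in ARD}. Write $\widetilde{A}^*_I = (A^*_I[1],\ldots,A^*_I[L])$, $\widehat{A}^\lambda_I = (\widehat{A}^\lambda_I[1],\ldots,\widehat{A}^\lambda_I[L])$ and $\Delta_I = \widehat{A}^\lambda_I - \widetilde{A}^*_I$. By \Cref{eq:oracle population in ARD}, $\widetilde{A}^*_I$ exists, is unique, has operator norm bounded by a constant depending only on $\mathcal{M}$ and $\mathfrak{m}$, and is block-sparse with $\|\widetilde{A}^*_I\|_0 \le L d_0^2$, supported on a known set $\widetilde{\mathcal{S}}$ with $|\widetilde{\mathcal{S}}| \lesssim L d_0^2$.

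Starting from the optimality of $\widehat{A}^\lambda_I$ in \eqref{eq:VARD oracle} and subtracting the in-sample loss at $\widetilde{A}^*_I$, I would obtain the basic inequality
\begin{align*}
\sum_{t\in I}\Bigl\|\sum_{l=1}^{L}\Delta_I[l]X_{t+1-l}\Bigr\|_2^2 + \lambda\sqrt{|I|}\,\|\widehat{A}^\lambda_I\|_1
 \le 2\sum_{t\in I}\Bigl(X_{t+1} - \sum_{l=1}^{L}A^*_I[l]X_{t+1-l}\Bigr)^{\!\top}\!\sum_{l=1}^{L}\Delta_I[l]X_{t+1-l} + \lambda\sqrt{|I|}\,\|\widetilde{A}^*_I\|_1.
\end{align*}
The cross term then splits into three pieces, exactly as in the $L=1$ proof: (I) a pure noise part $\sum_{t}\varepsilon_{t+1}^{\top}\Delta_I[l]X_{t+1-l}$ summed over $l$, controlled by $\|\sum_{t\in I}\varepsilon_{t+1}X_{t+1-l}^{\top}\|_\infty \lesssim \sqrt{|I|\log(n\vee p)}$ from \Cref{coro:restricted eigenvalue AR}(\textbf{a}); (II) a bias term $\sum_{t}\{(A^*_t[l] - A^*_I[l])X_{t+1-l}\}^{\!\top}\Delta_I[l']X_{t+1-l'}$, whose expectation vanishes by the normal equation \eqref{eq-var-d-divide-bs} and whose entrywise fluctuations can be bounded via \Cref{prop:deviation AR change point}(\textbf{a}) applied to the augmented process $\widetilde{Y}_t$ (using $\|A^*_t - A^*_I\|_{\op} \lesssim 1$); and (III) an $O(L)$-sized boundary term at the change points inside $I$ coming from the latent vectors $\widetilde{X}_{\eta_k+i-l}$ with $i < l$, controlled by \Cref{prop:deviation AR change point}(\textbf{b}) as in \eqref{eq:mis-specified bound 1}--\eqref{eq:mis-specified bound 2}.

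Combining these three bounds yields, with probability at least $1 - (n \vee p)^{-6}$,
\begin{align*}
\sum_{t\in I}\Bigl\|\sum_{l=1}^{L}\Delta_I[l]X_{t+1-l}\Bigr\|_2^2 + \tfrac{\lambda}{2}\sqrt{|I|}\sum_{l=1}^{L}\|\Delta_I[l](\widetilde{\mathcal{S}}^c)\|_1 \le \tfrac{3\lambda}{2}\sqrt{|I|}\sum_{l=1}^{L}\|\Delta_I[l](\widetilde{\mathcal{S}})\|_1,
\end{align*}
i.e.\ the off-support $\ell_1$ error of $\Delta_I$ is at most three times its on-support $\ell_1$ error. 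On this cone, \Cref{coro:restricted eigenvalue AR}(\textbf{c}) (applicable since $|I| \ge \delta \gtrsim d_0^2\log(n\vee p)$ absorbs both the $\log p$ and the $KL$ terms required there) lower bounds the left-hand side by $(|I|c_x/4)\sum_l\|\Delta_I[l]\|_2^2$ minus a $\log(p)\sum_l\|\Delta_I[l]\|_1^2$ slack, which is in turn absorbed by the $(|I|c_x/8)\sum_l\|\Delta_I[l]\|_2^2$ term using $|\widetilde{\mathcal{S}}| \lesssim L d_0^2$ and the length condition on $|I|$. Rearranging via Cauchy--Schwarz on $\sum_l\|\Delta_I[l](\widetilde{\mathcal{S}})\|_1 \le \sqrt{L d_0^2}\sqrt{\sum_l\|\Delta_I[l]\|_2^2}$ delivers \eqref{eq:var-d-oracle 1}, and the standard conversion via the cone condition gives \eqref{eq:var-d-oracle 2}.

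The main obstacle I expect is the bias term (II): unlike the $L=1$ case, a single coordinate of $\sum_t X_{t+1-l}X_{t+1-l'}^{\top}(A^*_t[l'] - A^*_I[l'])^{\!\top}$ now mixes cross-lag covariances across different lag indices, and over an interval $I$ that may contain several change points. I plan to handle this by vectorizing the augmented process $\widetilde{Y}_t$ so that each coordinate of the bias is $v^{\top}(\sum_t \widetilde{Y}_t\widetilde{Y}_t^{\top} - \mathbb{E}[\cdot])u$ for unit-norm $u,v$ determined by (a column of) $A^*_t - A^*_I$, whose spectral density remains uniformly bounded by $\mathcal{M}$ on every stationary block and whose cross-block contribution is zero in expectation by \eqref{eq-var-d-divide-bs}; this reduces the bound to a $K$-fold union of the concentration inequality in \Cref{prop:deviation AR change point}(\textbf{a}), at the cost of only $\sqrt{\log(n\vee p)}$.
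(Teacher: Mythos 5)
Your sketch follows the same overall architecture as the paper's proof: start from the Lasso basic inequality, split the cross term into a noise part, a bias part, and a latent-vector boundary part, invoke \Cref{eq:oracle population in ARD} for the block sparsity of $\widetilde{A}^*_I$, use the restricted eigenvalue inequality \Cref{coro:restricted eigenvalue AR}(\textbf{c}), and close via the cone condition and Cauchy--Schwarz. Your handling of the noise term and the bias term is in the same spirit as the paper's Steps 2 and 4, and you correctly note that the bias concentration reduces to augmenting $\widetilde{Y}_t$ with a time-varying linear functional so that \Cref{prop:deviation AR change point} applies.

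There is, however, one genuine gap in the boundary (latent) term, which you label (III) and describe as ``$O(L)$-sized'' and propose to control ``as in \eqref{eq:mis-specified bound 1}--\eqref{eq:mis-specified bound 2}.'' This description is correct only when $I$ contains a single change point. \Cref{lem-var-d-oracle} is stated for an \emph{arbitrary} interval $I$ of length at least $\delta$, which can contain up to $K$ change points, so the set of times $t$ where $\widetilde{Y}_t \neq Y_t$ has cardinality of order $KL$. If you bound each boundary contribution separately and take a union bound as in the $L=1$ case, you get a bound of order $KL\sqrt{\log(n\vee p)}\,\|\Delta_I\|_1$, and absorbing this into $(\lambda/10)\sqrt{|I|}\,\|\Delta_I\|_1$ would require $|I| \gtrsim K^2 L^2$; since the paper only guarantees $|I| \ge \gamma \asymp K d_0^2 \log(n\vee p)$, this fails whenever $K$ is large relative to $d_0^2 \log(n\vee p)/L^2$, a regime the paper's assumptions do not exclude. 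The paper avoids this by exploiting the block-wise independence structure: latent contributions at change points $\eta_k$ and $\eta_{k'}$ are independent whenever $|k - k'| > 1$, so splitting the sum into odd- and even-indexed $k$ gives two sums of independent sub-exponential variables, each of order $\sqrt{K\log(Lp)}$ rather than $K\sqrt{\log(Lp)}$; this $\sqrt{K}$ gain is exactly what is needed so that the required length condition $|I| \gtrsim L^2 K$ is implied by $|I| \ge \gamma$. You should incorporate this odd/even independence split (the paper's Step 3) to close the argument. Apart from this, your reliance on $\delta$ (i.e., $\gamma$) to absorb the $\max\{\log p, KL\}$ length requirement for \Cref{coro:restricted eigenvalue AR}(\textbf{c}) is correct.
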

%Note that in  \Cref{eq:oracle population in ARD} we show that  $ A^*_I$ is supported on $\mathcal S$ (where $\mathcal S$ is defined in \eqref{eq:support in vard}),  \Cref{lem-var-d-oracle} also implies that 
%	\begin{align*}	
%		\| (\widehat A^\lambda_{I}[1] (\mathcal S^c), \ldots, \widehat A^\lambda_{I}[L] (\mathcal S^c))\|_1 \le C\lambda d_0^2.
%	\end{align*}
	
\begin{proof}  
Let $A_I^*$ be defined as  \eqref{eq-var-d-divide-bs}.  By \Cref{eq:oracle population in ARD}, $A_I^*[l]$, $l \in \{1, \ldots, L\}$, is supported on $\mathcal S$, defined in \eqref{eq:effective support}. % Observe in addition that \eqref{eq:var-d-oracle 1} is equivalent  to
%	\[
%		\left\|\sum_{l=1}^L \|\widehat A^ \lambda _{I} [l] - A_I^*[l]\right\|^2_2 \leq \frac{CL^2 d_0^2 \lambda^2}{|I|},
%	\]
%	where $C > 0$ is an absolute constant.

\vskip 3mm
\noindent \textbf{Step 1.}  For $l \in \{1, \ldots, L\}$, let $\Delta[l] = A^*_I[l] - \widehat A^ \lambda  _I[l]$.  Standard calculations lead to
	\begin{align}
		\sum_{t\in I } \|\sum_{l = 1}^L \Delta [l]X_{t+1-l} \|_2^2 + 2 \sum_{t \in I }  \left (\Delta [l] X_{t+1-l}\right)^{\top} \left  (X_{t+1} - \sum_{l=1}^L A ^*_I[l]  X_{t +1-l}  \right) \nonumber \\
		+  \lambda  \sqrt {| I| } \sum_{l=1}^L\|\widehat  A[l] \|_1 \leq \lambda \sqrt { |I|  } \sum_{l=1}^L \| A^*_I [l]\|_1, \label{eq:standard AR lasso-d}
	\end{align}
   Denote  $ \mathcal I _k  = [\eta_{k},  \eta_{k}+L-1] $ and $\mathcal I = \cup_{k=1}^K \mathcal I_k$.  Let $Y_t $ be defined as in \eqref{eq-transform-l-to-1}. Note that $Y_t \not = \widetilde Y_t = (X_t^{\top}, \ldots, X_{t - L+1}^{\top})^{\top}$ only at $ t\in \mathcal I $.  Observe that \eqref{eq:standard AR lasso-d} gives
	\begin{align*}
		& \sum_{t \in I }  \left (  \Delta \widetilde{Y}_t   \right)^{\top} \left  (X_{t+1} - A_I^* \widetilde{Y}_t  \right) \\
		 =& \sum_{t \in I }  \left (  \Delta \widetilde Y_t    \right)^{\top}  \left\{ ( X_{t+1}  -  A^*_t Y_t  )   +  A^*_t ( Y_t  -\widetilde Y_t ) + (A^*_t -A^*_ I ) \widetilde Y_t \right\} \\
		= & \sum_{t \in I }  \left (  \Delta \widetilde Y_t \right)^{\top}  \epsilon_t  + \sum_{t\in  \mathcal{I} }  \left (   \Delta\widetilde Y_t    \right)^{\top}   A^* _t ( Y_t -\widetilde Y_t)   +  \sum_{t \in I }   \left (  \Delta \widetilde Y_t    \right)^{\top}   (A^*_t -A^*_ I ) \widetilde Y_t \\
		 =& (I) + (II) +(III). 
	\end{align*}

\vskip 3mm
\noindent 
{\bf Step 2.} As for term $(I)$, by \Cref{coro:restricted eigenvalue AR}(\textbf{a}) and the assumption that $\lambda\geq C_{\lambda}\sqrt {\log(p)}$, it holds that
	\[
		|(I) | \le  \| \Delta  \|_1 \max_{l = 1, \ldots, L} \left\|\sum_{t\in I }X_{t+1-l} \varepsilon_t^{\top}  \right\|_{\infty } \le   \frac{\lambda}{10   }  \sqrt { |I|}     \| \Delta \|_1  . 
	\]

\vskip 3mm
\noindent 
{\bf Step 3.}  As for term $(II)$, note that $A^*_t \in \mathbb R^{p\times pL} $. Denote $A^*_t(i)$ as the $i$-th row of $A^*_t$ and thus  $ A^*_t(i) \in \mathbb R^{pL}$ .  It holds that
	\begin{align*} 
		|(II) | \le  &  \| \Delta  \|_1     \max_{i = 1, \ldots, p} \max_{j = 1, \ldots, Lp} \left| \sum_{t\in \mathcal I }  A_t ^* (i)(\widetilde Y_t - Y_t)  Y_t(j)  \right|  \\
		\le &   \| \Delta  \|_1   L  \max_{i = 1, \ldots, p} \max_{j = 1, \ldots, Lp} \max_{l = 1, \ldots, L}  \left| \sum_{ \eta_ k \in \mathcal{I}} A^* _{\eta_k -1+l} (i)(\widetilde Y_{\eta_k +1-l} - Y_{\eta_k -1 +l} )  Y_ {\eta_k -1+l}  (j)  \right|.
	\end{align*}
	Observe that $ (\widetilde Y_{\eta_k  - 1 + l} - Y_{\eta_k  - 1  + l} )Y_ {\eta_k -1+ l}  (j)    $ and $ (\widetilde Y_{\eta_{k'}  - 1 + l} - Y_{\eta_{k'}  - 1  + l} )Y_ {\eta_{k'} -1+ l}  (j)    $ are independent if $|k-k'| >1$.  Therefore with probability at least $1- n^{-6}$,
	\begin{align*}
		& \sum_{ \eta_ k \in \mathcal{I}  }   A_{\eta_k  - 1 + l}^*  (i)(\widetilde Y_{\eta_k  - 1 + l} - Y_{\eta_k  - 1  + l} )  Y_ {\eta_k -1+ l}  (j)  \\
 		= &  \left(  \sum_{ k : \ k \text{ is odd} } + \sum_{ k : \ k \text{ is even} } \right) A^* _{\eta_k  - 1 + l} (i)  (\widetilde Y_{\eta_k  - 1 + l} - Y_{\eta_k  - 1  + l} )  Y_ {\eta_k -1+ d}  (j)   \\
		\le & 2 C_\mathcal M \max \{ \sqrt {K\log(Lp) }, \, \log(Lp) \} ,
	\end{align*}
	where the last inequality follows from standard tail bounds for  the sum of  independent sub-exponential random variables together with the observations that $\|  A_t(i)^* \|_2 \le \|  A_t ^* \|_{\op} \le  1 $ for all $t$.  Therefore 
	\begin{align*}
		| (II)| \le C_\mathcal M  \|\Delta\|_1  L    \max \{ \sqrt {K\log(p) } , \, \log(p) \}  \\
		\le (\lambda/10)  \sqrt {|I| } \|\Delta\|_1,
	\end{align*}
 	where $ |I| \ge  2C   L^2|S| ^2   K \log( p)  $ and $\lambda \ge C_{ \lambda }\sqrt {       \log(n \vee p) }$ are  used in the last inequality. 

\vskip 3mm
\noindent {\bf Step 4.} As for term $(III)$, we have that 
	\begin{align*}  
		|(III) | \le  &  \| \Delta \|_1 \max_{ i = 1, \ldots, p } \max_{j = 1, \ldots, Lp}  \left |\sum_{t\in I } (  A^* _t(i) -A^* _I(i) ) Y_t Y_t(j) \right |  \\
		\le & \| \Delta \|_1 \max_{ i = 1, \ldots, p } \max_{j = 1, \ldots, Lp}  \left |\sum_{t\in I } (  A^* _t(i) -A_I^* (i) ) \widetilde Y_t \widetilde Y_t(j) \right | \\
		& \hspace{2cm} + \| \Delta \|_1 \max_{ i = 1, \ldots, p } \max_{j = 1, \ldots, Lp}    \left |\sum_{t\in I }  (  A^* _t(i) -A_I^* (i) )  (\widetilde Y_t - Y_t) \widetilde Y_t(j) \right |   \\ 
		& \hspace{2cm} + \| \Delta \|_1  \max_{ i = 1, \ldots, p } \max_{j = 1, \ldots, Lp}  \left |\sum_{t\in I }  (  A^* _t(i) -A_I^* (i) )   (\widetilde Y_t - Y_t) \widetilde Y_t(j) \right |\\
		= & (III.1) + (III.2) + (III.3).
\end{align*}

Using the same arguments as in {\bf Step 3}, we have that
	\[
		|(III.3)| = \| \Delta \|_1  \max_{ i = 1, \ldots, p } \max_{j = 1, \ldots, Lp} \left |\sum_{t\in  \mathcal I  } ( A^* _t(i) -A^* _I(i) ) (\widetilde Y_t - Y_t) \widetilde Y_t(j) \right | \le  (\lambda/10)  \sqrt {|I| } \|\Delta\|_1  
	\]
	and $|(III.1)| \le  (\lambda/10)  \sqrt {|I| } \|\Delta\|_1$.  Due to the construction of $A^*_I$, it holds that
	\[
		\mathbb{E} \left( \sum_{t\in I } (  A_t^*   -A_I ^*  )  \widetilde Y_t \widetilde  Y_t^{\top}  \right)  = 0.
	\]
	Denote $v_{t}[i] =  A_t^* (i) -A_I^* (i)$. Observe that 
  	$$\|v_{t}[i]\|_2 \le \| A_t^* (i) -A_I^* (i)   \|_{\op} \le 2 .$$ 
	Consider the VAR process $V_t = ( \widetilde Y_t^{\top}, \widetilde  Y_t^{\top}v_t[i])^{\top}   \in \mathbb R^{Lp+1}$.  
Since 
$$ e_{Lp+1}    \sum_{t} V_tV_t^{\top}   e_j=    \sum_{t} v_t [i]\widetilde Y_t\widetilde Y_t (j)   ,  $$  
and that $\{ \widetilde Y_t\}_{t=1}^T $ is a VAR(1) change point process,
 \Cref{coro:restricted eigenvalue AR}(\textbf{a})   gives
$$  \mathbb{P} \left(  \left|  \sum_{t \in I } v_t [i]\widetilde Y_t\widetilde Y_t (j)     - \mathbb{E}  \left( \sum_{t\in I } v_t [i]\widetilde Y_t\widetilde Y_t (j)   \right)   \right|   \ge  12 \mathcal M \sqrt { |I| \log (pn) }\right)  \le \frac{1}{n^3p^3} .$$
   Therefore  if $\lambda \ge C_{\mathcal M} \sqrt {  \log (pn) }$, then  with probability less than $1/(p^3n^3)$,
   $$  \left|  \sum_{t \in I } v_t [i]\widetilde Y_t\widetilde Y_t (j)     \right|   \ge \frac{\lambda \sqrt {|I| } }{10  }   .$$
   So it holds that
   $$  (III)   \le  \frac{\lambda}{10   }  \sqrt { |I|}   \| \Delta   \|_1.  $$

\vskip 3mm   
\noindent{\bf Step 5.}
   The previous calculations give 
   \begin{align*} 
   	\sum_{t\in I }  (\Delta Y_t   )^2   +  ( \lambda /2)  \sqrt {I }     \sum_{l=1}^L  \| \Delta[l]  ( \mathcal S^c) \|_1 \le  (3\lambda/10)  \sqrt { I  }    \| \Delta (\mathcal S)   \|_1\\
   \le (3\lambda/5 )  \sqrt { I   }      \sum_{l=1}^L   \| \Delta [l] (\mathcal S)   \|_1,
   \end{align*}
 where $|\mathcal S|\le 4 L d_0^2 $.
 With the restricted eigenvalue condition in \Cref{coro:restricted eigenvalue AR},   standard Lasso calculations  yields the desired results.
 \end{proof}

\end{document}